\def\FF{\mathbb{F}}
\newcommand{\vc}[1]{\ensuremath{\vcenter{\hbox{#1}}}}
\title{Hypergraph Tur\'an Problems in $\ell_2$-Norm}
\newtheorem{theo}{Theorem}
\newtheorem{lemma}[theo]{Lemma}
\newtheorem{ques}[theo]{Question}
\newtheorem{problem}[theo]{Problem}
\newtheorem{corl}[theo]{Corollary}
\newtheorem{conj}[theo]{Conjecture}
\newtheorem{claim}[theo]{Claim}
\theoremstyle{definition}
\newtheorem{defn}[theo]{Definition}
\newtheorem{exmp}[theo]{Example}
\numberwithin{theo}{section}
\tikzset{unlabeled_vertex/.style={inner sep=1.7pt, outer sep=0pt, circle, fill}} 
\tikzset{labeled_vertex/.style={inner sep=2.2pt, outer sep=0pt, rectangle, fill=yellow, draw=black}} 
\tikzset{edge_color0/.style={color=black,line width=1.2pt,opacity=0.5}} 
\tikzset{edge_color1/.style={color=red,  line width=1.2pt,opacity=1}} 
\tikzset{edge_color2/.style={color=blue, line width=1.2pt,opacity=1}} 
\tikzset{edge_color3/.style={color=green,line width=1.2pt}} 
\tikzset{edge_color4/.style={color=red,  line width=1.2pt,dotted}} 
\tikzset{edge_color5/.style={color=blue, line width=1.2pt,dotted}} 
\tikzset{edge_color6/.style={color=green, line width=1.2pt,dotted}} 
\tikzset{edge_color7/.style={color=orange, line width=1.2pt}} 
\tikzset{edge_color8/.style={color=gray, line width=1.2pt}} 
\tikzset{edge_thin/.style={color=black}} 
\tikzset{edge_hidden/.style={color=black,dotted,opacity=0}} 
\tikzset{vertex_color1/.style={inner sep=1.7pt, outer sep=0pt, draw, circle, fill=red}} 
\tikzset{vertex_color2/.style={inner sep=1.7pt, outer sep=0pt, draw, circle, fill=blue}} 
\tikzset{vertex_color3/.style={inner sep=1.7pt, outer sep=0pt, draw, circle, fill=green}} 
\tikzset{labeled_vertex_color1/.style={inner sep=2.2pt, outer sep=0pt, draw, rectangle, fill=red}} 
\tikzset{labeled_vertex_color2/.style={inner sep=2.2pt, outer sep=0pt, draw, rectangle, fill=blue}} 
\tikzset{labeled_vertex_color3/.style={inner sep=2.2pt, outer sep=0pt, draw, rectangle, fill=green}} 
\def\outercycle#1#2{ \draw \foreach \x in {0,1,...,#2}{(0.5*\x,0) coordinate(x\x)}; 
\path (0,0.3) -- (1,0.3); 
} 
\def\drawhypervertex#1#2{ \pgfmathtruncatemacro{\plusone}{#1+1}  \draw[edge_color2] (x#1)++(0,-0.2-0.2*#2)+(-0.2,0) -- +(0.2,0) +(0,0) node[fill=white,outer sep=0,inner sep=0]{\tiny \plusone};}
\def\drawhyperedge#1#2{\pgfmathtruncatemacro{\plusone}{#1+1} \draw[dotted] (x0)++(0,-0.2-0.2*#1)--++(0.5*#2-0.5,0);
\draw[opacity=0] (x0)++(0,-0.2-0.2*#1-0.3)-- ++(-0.4,0);
}
\tikzset{
vtx/.style={inner sep=1.1pt, outer sep=0pt, circle, fill,draw}, 
vtxl/.style={inner sep=1.1pt, outer sep=0pt, rectangle, fill=yellow,draw=black}, 
hyperedge/.style={fill=pink,opacity=0.5,draw=black}, 
}
\newcommand{\Kfourthree}{\vc{
\begin{tikzpicture}[scale=0.55]
\foreach \i in {1,2,3,4}{
\draw (90*\i-45:1.5) coordinate(\i);
}

\foreach \r in {0-45,90-45,180-45,270-45}{
\begin{scope}[rotate=\r]
\draw[hyperedge] 
(0:1.5) to[out=140,in=275,looseness=1.2] (90:1.5)  to[out=265,in=40,looseness=1.2] (180:1.5)  to[out=35,in=145,looseness=1.2] (0:1.5) 
;
\end{scope}
}
\draw 
(1) node[vtx,label=right:{\tiny }]{}
(2) node[vtx,label=left:{\tiny }]{}
(3) node[vtx,label=left:{\tiny }]{}
(4) node[vtx,label=right:{\tiny }]{}
;
\end{tikzpicture}
}}
\newcommand{\Kfourthreeminus}{\vc{
\begin{tikzpicture}[scale=0.55]
\foreach \i in {1,2,3,4}{
\draw (90*\i-45:1.5) coordinate(\i);
}

\foreach \r in {0-45,90-45,270-45}{
\begin{scope}[rotate=\r]
\draw[hyperedge] 
(0:1.5) to[out=140,in=275,looseness=1.2] (90:1.5)  to[out=265,in=40,looseness=1.2] (180:1.5)  to[out=35,in=145,looseness=1.2] (0:1.5) 
;
\end{scope}
}
\draw 
(1) node[vtx,label=right:{\tiny }]{}
(2) node[vtx,label=left:{\tiny }]{}
(3) node[vtx,label=left:{\tiny }]{}
(4) node[vtx,label=right:{\tiny }]{}
;
\end{tikzpicture}
}}
\begin{document}



\author{%
  J\'ozsef Balogh \footnote{Department of Mathematics, University of Illinois at Urbana-Champaign, Urbana, Illinois 61801, USA. E-mail: \texttt{jobal@illinois.edu}. Research is partially supported by NSF Grant DMS-1764123, NSF RTG grant DMS 1937241, Arnold O. Beckman Research
Award (UIUC Campus Research Board RB 18132), the Langan Scholar Fund (UIUC), and the Simons Fellowship.}
\and Felix Christian Clemen \footnote {Department of Mathematics and Statistics, University of Victoria, Victoria, B.C., Canada. E-mail: \texttt{fclemen@uvic.ca}. Research supported in part by PIMS postdoctoral fellowship. }
 \and Bernard Lidick\'{y} \footnote {Iowa State University, Department of Mathematics, Iowa State University, Ames, IA, E-mail: \newline \texttt{ lidicky@iastate.edu}. Research of this author is partially supported by NSF grant DMS-1855653.}
}

\date{\today}
\maketitle


\abstract{
There are various different notions measuring extremality of hypergraphs. In this survey we compare the recently introduced notion of the codegree squared extremal function with the Tur\'an function, the minimum codegree threshold and the uniform Tur\'an density.

The codegree squared sum $\textup{co}_2(G)$ of a $3$-uniform hypergraph $G$ is defined to be the sum of codegrees squared $d(x,y)^2$ over all pairs of vertices $x,y$. In other words, this is the square of the $\ell_2$-norm of the codegree vector. 
We are interested in how large $\textup{co}_2(G)$ can be if we require $G$ to be $H$-free for some $3$-uniform hypergraph $H$. This maximum value of $\textup{co}_2(G)$ over all $H$-free $n$-vertex $3$-uniform hypergraphs $G$ is called the codegree squared extremal function, which we denote by $\textup{exco}_2(n,H)$.

We systemically study the extremal codegree squared sum of various $3$-uniform hypergraphs using various proof techniques. Some of our proofs rely on the flag algebra method while others use more classical tools such as the stability method. In particular, we (asymptotically) determine the codegree squared extremal numbers of matchings, stars, paths, cycles, and $F_5$, the $5$-vertex hypergraph with edge set $\{123,124,345\}$. 

Additionally, our paper has a survey format, as we state several conjectures and give an overview of Tur\'an densities, minimum codegree thresholds and codegree squared extremal numbers of popular hypergraphs. We intend to update the arXiv version of this paper regularly.}
\section{Introduction}
 Given a $k$-uniform hypergraph (or $k$-graph) $H$, the \emph{Tur\'an function} (or \emph{extremal number}) $\textup{ex}(n,H)$ is the maximum number of edges in an $H$-free $n$-vertex $k$-uniform hypergraph. The \emph{Tur\'an density} of $H$, denote by $\pi(H)$, is the scaled limit
 \[
 \pi(H)=\lim_{n\to \infty} \frac{\textup{ex}(n,H)}{ \binom{n}{k}}.
 \]
 Determining these numbers is a central problem in extremal combinatorics. For graphs ($k=2$), this question is well-explored. The Erd\H{o}s-Stone theorem \cite{ErdosStone,ErdosSimonovits} asymptotically determines the Tur\'an density for graphs with chromatic number at least three. For hypergraphs, determining the Tur\'an density is notoriously difficult; very few exact results are known. For example, the Tur\'an density of the innocent looking \emph{tetrahedron} $K_4^3$, the complete $3$-uniform hypergraph on $4$ vertices, is unknown. 
 
 In order to get a better understanding of these problems, various different kinds of extrema\-li\-ty such as the generalized Tur\'an function or the minimum codegree threshold have been studied. We~\cite{BalCleLidmain} recently introduced a new type of extremality for hypergraphs and solved the tetrahedron problem asymptotically for this notion. Here, we will systematically study extremal problems regarding this function. \\
Let $G$ be an $n$-vertex $k$-uniform hypergraph. For a vertex set $T\subset V(G)$, the \emph{codegree} of $T$, denoted by $d_G(T)$, is the number of edges in $G$ containing $T$. We drop the index if $G$ is clear from the context. 
The \emph{codegree vector} of $G$ is the vector $X \in \mathbb{Z}^{\binom{V(G)}{k-1}}$, where $X(v_1,v_2,\ldots,v_{k-1}) = d_G(v_1,v_2,\ldots,v_{k-1})$ for all $\{v_1,v_2,\ldots,v_{k-1}\} \in \binom{V(G)}{k-1}$.
Finding $\textup{ex}(n,H)$ is equivalent to determining the maximum $\ell_1$-norm of the codegree vector of an $H$-free $n$-vertex $k$-uniform hypergraph. Here, we study maximality with respect to the $\ell_2$-norm of the codegree vector. The \emph{codegree squared sum} $\textup{co}_2(G)$ is the sum of codegrees squared over all $k-1$ sets $T$, i.e.,
\begin{align*}
\textup{co}_2(G)=\sum_{\substack{T \in \binom{[n]}{k-1}} }d_G^2(T).
\end{align*}
In other words, the codegree squared sum is the square of the $\ell_2$-norm of the codegree vector. 
\begin{ques}\textup{\cite{BalCleLidmain}}
\label{questionco2}
Given a $k$-uniform hypergraph $H$, what is the maximum $\ell_2$-norm of the codegree vector of a $k$-uniform $H$-free $n$-vertex hypergraph $G$?
\end{ques}
We follow the notation introduced in \cite{BalCleLidmain}. Let $\mathcal{F}$ be a family of $k$-uniform hypergraphs. Denote by $\textup{exco}_2(n,\mathcal{F})$ the maximum codegree squared sum among all $k$-uniform $n$-vertex $\mathcal{F}$-free hypergraphs, and let the codegree squared density $\sigma(\mathcal{F})$ be its scaled limit, i.e.,
\begin{align}
\label{sigmalimit}
    \textup{exco}_2(n,\mathcal{F})= \smash{\displaystyle\max_{G \text{ is } \mathcal{F} \text{-free}}} \textup{co}_2(G) \quad \quad \text{ and } \quad \quad
   \sigma(\mathcal{F})&=\limsup\limits_{n \rightarrow \infty} \frac{\textup{exco}_2(n,\mathcal{F})}{\binom{n}{k-1}(n-k+1)^2}.
\end{align}
We~\cite{BalCleLidmain} proved general properties of $\sigma$ including the existency of the limit in \eqref{sigmalimit}. In Table~\ref{tab:exactcod2} we present bounds and exact values for the codegree squared density of various hypergraphs.
Table~\ref{tab:pic} provides the definitions with pictures of all hypergraphs included in Table~\ref{tab:exactcod2}. Unless otherwise mentioned, all upper bounds on $\sigma$ in this table were obtained using Razborov's flag algebra machinery \cite{flagsRaz}. It is a standard application of flag algebras to obtain these results; we give a short explanation of it in Section~\ref{flagresults}. Table~\ref{tab:exactcod2} also gives an overview of known results for the Tur\'an density, the minimum codegree threshold and the uniform Tur\'an density. Denote by $\delta(G)$ the minimum $(k-1)$-codegree of a $k$-graph $G$. For a family of $k$-graphs $\mathcal{F}$, the \emph{minimum codegree Tur\'an number} $\textup{ex}_{k-1}(n,\mathcal{F})$ is the maximum $\delta(G)$ over all $\mathcal{F}$-free $k$-graphs $G$ on $n$ vertices. The \emph{minimum codegree threshold}, 
\[
\pi_{k-1}(\mathcal{F})= \lim_{n \to \infty} \frac{\textup{ex}_{k-1}(n,\mathcal{F})}{ \binom{n}{k-2}},
\]
is its scaled limit. \\
Reiher, R\"{o}dl and Schacht~\cite{MR3764068} recently introduced a variant of the Tur\'an density, where we want to maximize the density of every linear sized subsets of $H$-free hypergraphs. For real numbers $d\in[0,1]$, and $\eta>0$ a $3$-graph $G=(V,E)$ is \emph{$(d,\eta,1)$-dense} if for all $U\subseteq V$ the relation
\[
\left| U^{(3)}\cap E \right| \geq d \binom{|U|}{3}-\eta |V|^3
\]
holds, where $U^{(3)}$ denotes the set of all three element subsets of $U$. The \emph{uniform Tur\'an density} $\pi_u(H)$ of a 3-graph $H$ is defined to be 
\begin{align*}
\pi_u(H):&=\sup \{d\in [0,1]: \ \text{ for every } \eta>0 \text{ and } n\in \mathbb{N} \text{ there exists an } \text{$H$-free} \\
& \quad \ \ \text{$(d,\eta,1)$-dense hypergraph } G \text{ with } |V(G)|\geq n\}.
\end{align*}
\begin{theo}
\label{tabletheo}
All bounds presented in Table~\ref{tab:exactcod2} hold. 
\end{theo}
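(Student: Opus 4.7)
Since Table~\ref{tab:exactcod2} collects results for many different hypergraphs, the plan is to split the proof of the theorem into a sequence of propositions, one per row (or per family of related rows), and then organize the rest of the paper around these propositions. Broadly, the bounds fall into three types, each requiring a different technique: (i) \emph{flag algebra upper bounds}, (ii) \emph{structural upper bounds} proved by classical extremal arguments, and (iii) \emph{matching lower bounds} obtained by exhibiting explicit constructions.

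For type (i), the plan is to invoke Razborov's semidefinite flag algebra method \cite{flagsRaz} in a uniform way. Since $\textup{co}_2(G)$ is a linear combination of densities of $3$-vertex sub-hypergraphs (namely, it counts ordered triples $(x,y,e_1,e_2)$ with $e_1\cap e_2\supseteq\{x,y\}$, so that $\textup{co}_2(G)/\binom{n}{2}(n-2)^2$ converges to a fixed linear form in flag densities), any flag algebra computation that bounds Tur\'an densities adapts with essentially no change to bound $\sigma$. In Section~\ref{flagresults} I will describe the objective function once, display a short table of certificates, and simply record the numerical bound produced for each hypergraph marked in Table~\ref{tab:exactcod2} as ``flags''.

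For type (ii), the substantive work lies in the exact asymptotic results for matchings $M_t$, stars $S_t$, tight and loose paths and cycles, and $F_5$. For matchings and stars the plan is a short counting argument: if $G$ is $M_t$-free then a greedy argument bounds the number of pairs $\{x,y\}$ with $d(x,y)\geq 1$, and Cauchy--Schwarz against $e(G)$ handles the rest; the star case is dual, bounding the maximum codegree directly. For tight and loose paths/cycles the plan is to mimic the Erd\H{o}s--Gallai style induction: delete a vertex of small codegree-squared contribution and apply the inductive hypothesis, using that the extremal construction is a complete star or a blown-up short structure. For $F_5$ the plan is the stability method in the spirit of Frankl--F\"uredi: first show via flag algebras (or a direct supersaturation argument) that any $F_5$-free $n$-vertex $G$ with $\textup{co}_2(G)$ close to the extremal value must be $o(n^3)$-close in edit distance to the iterated blow-up of the single edge $\{1,2,3\}$ (i.e.\ the complete balanced bipartite-like $3$-graph), and then run a symmetrization/cleaning argument to remove the remaining bad edges without decreasing $\textup{co}_2$.

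For type (iii), each upper bound will be complemented by a construction: complete $k$-partite $3$-graphs, random-like designs, or the explicit extremal configurations already known from the Tur\'an-function literature. Verifying that these have the claimed $\textup{co}_2$ is a routine computation of codegree distributions. I expect the main obstacle to be the stability step for $F_5$ and for the tight cycles, since the codegree-squared functional penalizes uneven codegree distributions more strongly than the edge count does, so the standard stability proofs for the Tur\'an density do not transfer verbatim; this is where the convexity-based (Cauchy--Schwarz / power-mean) refinements will have to be inserted. All of the remaining entries will then follow either by flag algebras or by monotonicity $\sigma(\mathcal{F}')\leq \sigma(\mathcal{F})$ whenever $\mathcal{F}\subseteq \mathcal{F}'$, which was established in \cite{BalCleLidmain}.
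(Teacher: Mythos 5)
Your plan has the right broad shape — explicit constructions for lower bounds, flag algebras for most upper bounds, and monotonicity to propagate bounds between related hypergraphs — and that is indeed how the paper organizes the proof (Section~\ref{boundsfortable} handles the rows one at a time). But there are several concrete gaps, some of which would make the plan fail if executed as written.

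First, you have misread what Theorem~\ref{tabletheo} covers. Matchings, stars, and loose paths/cycles are \emph{not} in Table~\ref{tab:exactcod2}; they are the subjects of the separate Theorems~\ref{codcyclepath}--\ref{codstar}. Folding them into the proof of Theorem~\ref{tabletheo} conflates two different pieces of the paper. The only way the ``exact'' results feed into Table~\ref{tab:exactcod2} is through $F_5$, whose $\sigma$-entry is a corollary of Theorem~\ref{F5exact}. Second, your description of the $F_5$ stability target is wrong: the extremal configuration is $S_n$, the complete balanced $3$-partite $3$-graph (the blow-up of a single edge), not its \emph{iterated} blow-up and not a ``bipartite-like'' structure. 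The iterated blow-up of an edge is the conjectured extremizer for $C_5^-$ and $\{K_4^{3-},C_5\}$, which are different rows. Third, your flag-algebra reduction is off by a dimension: $\textup{co}_2(G)$ counts pairs of edges through a common pair of vertices, hence is a linear combination of densities of \emph{four}-vertex $3$-graphs (as in equation~\eqref{eq:fa}), not three-vertex ones.

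The most serious gap is the $\pi_u$ column. Several entries there are exactly $0$ — for $F_{3,2}$, $F_5$, $\mathbb{F}$, $C_5^-$, and every family containing $F_{3,2}$. Flag algebras cannot certify exact zero; they give numerical upper bounds strictly above $0$. The paper instead invokes the Reiher--R\"{o}dl--Schacht characterization (Theorem~\ref{uniformturan}) and exhibits an explicit $3$-colouring of the shadow for each such $H$ (Figure~\ref{fig:colorings}). Conversely, for $C_5$ and $\{K_4^{3-},C_5\}$ the lower bound $\pi_u\geq 1/27$ is obtained by showing \emph{no} such colouring exists and then applying the jump result (Corollary~\ref{uniformturanjump}). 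None of this is captured by a flag-algebra-only plan. Also note that the flag algebra encoding of the uniform Tur\'an density is itself nontrivial and only yields a relaxation of the $(d,\eta,1)$-dense condition via inequalities like~\eqref{eq:fau}; the paper explicitly flags that this simplified approach is weaker than the Glebov--Kr\'al'--Volec machinery. Finally, you omit the few ``trivial'' entries that are not flag algebra outputs at all, e.g.\ $\sigma(\mathbb{F})\leq \pi(\mathbb{F})\leq 3/4$, which was used because the Fano plane has too many $7$-vertex subgraphs for the SDP to be tractable.
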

Here we collect many known results, and when known results are lacking, we at least mention the `trivial' bounds. Most upper bounds were obtained by a simple application of flag algebras.

In addition to the bounds presented in Table~\ref{tab:exactcod2}, we asymptotically determine the maximum $\ell_2$-norm of $3$-graphs not containing a loose cycle, loose path, matching or star. These problems are not approachable with flag algebra methods due to the fact that their codegree squared extremal number is $o(n^4)$. We use non-computer assisted methods to obtain these results. The discussion about the history of these problems will be deferred to the corresponding sections. Additionally, we provide a non-computer assisted proof determining the exact codegree squared extremal number of $F_5$.

Denote by $S_n$ the complete $3$-partite $3$-graph on $n$ vertices with part sizes $\lfloor n/3\rfloor,\lfloor (n+1)/3\rfloor,$ $\lfloor (n+2)/3\rfloor$.  
We~\cite{BalCleLidmain} showed that $S_n$ is the largest $\{F_4,F_5\}$-free $3$-graph in $\ell_2$-norm using a simple double counting argument and the corresponding $\ell_1$-norm result by Bollob\'as~\cite{Bollobascancellative}. Here, we will expand this result for $F_5$-free $3$-graphs, which requires more work than just applying the corresponding $\ell_1$-norm result.

\begin{table}[H]
\footnotesize
  \def\arraystretch{1.4}
  \setlength{\tabcolsep}{3pt}  
\begin{center}
  \begin{tabular}{ | l|| c |  c | c  |  c | c | c| c| c| }
    \hline
    $H$ &   $\pi(H)\geq $   &  $\pi(H)\leq $ &  $\sigma(H)\geq $   &  $\sigma(H)\leq $ & $\pi_2(H)\geq$ & $\pi_2(H)\leq$ & $\pi_u(H)\geq$ & $\pi_u(H)\leq$ \\
    \hline\hline
    \hyperref[K43]{$K_4^3$} &$5/9$\cite{MR177847}&$0.5615$\cite{BaberTuran} & $1/3$\cite{BalCleLidmain}  & $1/3$\cite{BalCleLidmain} & $1/2$ \cite{codegreeconj} & 0.529  & $1/2$&  0.529   \\
    \hyperref[K53]{$K_5^3$} &$3/4$\cite{Sidorenko1981SystemsOS,MR177847}&  
    $0.7696$~\cite{flagmatic} & $5/8$\cite{BalCleLidmain}  & $5/8$\cite{BalCleLidmain}  & $2/3$ \cite{LoMark,codegreeFalgas}& $0.74$    & $2/3$& 0.758  
    \\
            \hyperref[K63]{$K_6^3$} &$0.84$\cite{Sidorenko1981SystemsOS,MR177847}& 
            $0.8584$\cite{flagmatic}& $0.7348$\cite{BalCleLidmain} & 
            
            $0.7536$ & $3/4$ \cite{LoMark,codegreeFalgas}& $0.838$ & $3/4$ & 0.853 
            \\
    \hyperref[F32]{$F_{3,2}$} &$4/9$ \cite{F32Mubayi}&$4/9$~\cite{F32furedi}& $1/4$  & $1/4+10^{-9}$& $1/3$~\cite{codF32falgas}&$1/3$ \cite{codF32falgas}    & $0$& $0$~\cite{MR3764068}      \\

    \hyperref[sec:F33]{$F_{3,3}$} &$3/4$\cite{F32Mubayi}&$3/4$\cite{F32Mubayi}& $5/8$\cite{BalCleLidmain}  & $5/8$\cite{BalCleLidmain}& $1/2$& $0.604$         & $1/4$&   $1/4$~\cite{Schulke23}    \\
        \hyperref[F5]{$F_{5}$} &$2/9$ \cite{Bollobascancellative} &$2/9$ \cite{F5Frankl}& $2/27$ & $2/27$& $0$& $0$ & $0$ & $0$~\cite{MR3764068} \\ 
                \hyperref[Fano]{$\mathbb{F}$} &$3/4$ \cite{VeraSos}&$3/4$ \cite{FanoFuredi}& $5/8$ & $3/4$ & $1/2$\cite{MubayiFano} &$1/2$ \cite{MubayiFano}& 0 & 0~\cite{MR3764068}\\
            \hyperref[K43minus]{$K_4^{3-}$} & $2/7$ \cite{K43-extremalfrankl}& 
             $0.28689$~\cite{flagmatic}& $4/43$ & 
             $0.09307$& $1/4$ \cite{codegreeconj}&$1/4$\cite{FalgasK4-} & $1/4$~\cite{MR3474967} & $1/4$~\cite{MR3474967,MR3790065}   \\
     \hyperref[K43minusF32C5]{$K_4^{3-},F_{3,2},C_5$} &$12/49$ \cite{RavryTuran}&$12/49$ \cite{RavryTuran}& $2/27$ & $2/27$ & $1/12$& $0.186$  & $0$ & $0$~\cite{MR3764068} \\
    \hyperref[K43-F32]{$K_4^{3-},F_{3,2}$} &$5/18$ \cite{RavryTuran}&$5/18$ \cite{RavryTuran}& $5/54$ & $5/54$& $1/12$ & 0.202  & $0$ & $0$~\cite{MR3764068} \\
  \hyperref[K43-C5]{$K_{4}^{3-},C_5$} &$1/4$ \cite{RavryTuran}& 
  $0.25108$~\cite{RavryTuran}& $1/13$ & 
  $0.07695$& $1/12$ & $0.204$& $1/27$ & $4/27$~\cite{personalComSamuel} \\

    \hyperref[F32-J4]{$F_{3,2},J_4$} &$3/8$ \cite{RavryTuran}& $3/8$ \cite{RavryTuran}& $3/16$ & $3/16$& $1/12$& $0.274$  & $0$ & $0$~\cite{MR3764068} \\ 
    
 \hyperref[F32-J5]{$F_{3,2},J_5$} &$3/8$ \cite{RavryTuran}& $3/8$ \cite{RavryTuran}& $3/16$ &  $3/16$ & $1/12$ & $0.28$ & $0$ & $0$~\cite{MR3764068} \\
    
        \hyperref[sec:J4]{$J_4$} &$1/2$ \cite{DaisyBollobas}& 
        $0.50409$~\cite{flagmatic}& $0.28$ & 
        $0.2808$& $1/4$& $0.473$ & $1/3$~\cite{MR3790065}& 4/9~\cite{MR3790065} \\

        \hyperref[sec:J5]{$J_5$} &$1/2$ \cite{DaisyBollobas} & 
        $0.64475$& $0.28$ &  
        $0.44275$& $1/4$&  $0.613$ & $7/16$~\cite{MR3790065}& $9/16$~\cite{MR3790065}  \\
        
        \hyperref[sec:C5]{$C_5$} &$2\sqrt{3}-3$ \cite{F32Mubayi}&
        $0.46829$~\cite{flagmatic}& $0.25194$ & 
        $0.25311$& $1/3$&$0.3993$ & $4/27$\cite{MR4111729} & $4/27$~\cite{personalComSamuel} \\
        \hyperref[sec:C5minus]{$C_5^-$} &$1/4$ \cite{F32Mubayi}& 
        $1/4$~\cite{BernardFlo,OlegXizhi} &  $1/13$ &  
        $0.07726$ &$0$&  $0$~\cite{PiSaSc} & $0$ & $0$~\cite{MR3764068} \\
        \hyperref[sec:K_5^=]{$K_5^{=}$} &$0.58656$&
        $0.60962$& 
        $0.35794$& 
        $0.38873$& $1/2$&$0.569$& $1/2$& 0.567 \\
       \hyperref[sec:K_5^<]{$K_5^{<}$} &$5/9$&
       $0.57705$& $1/3$ & 
       $0.34022$& $1/2$&$0.560$& $1/2$& 0.568 \\
        \hyperref[sec:K_5^-]{$K_5^{-}$}         &$0.602673$~\cite{HongBjarne} &
        $0.64209$& 
        $0.35794$& 
        $0.41962$& $1/2$& 0.621 & $1/2$& 0.626 \\ 

\hline
  \end{tabular}
\end{center}
\caption[Known bounds]{\label{tab:exactcod2}
  This table shows upper and lower bounds on $\sigma$ and displays the known results for $\pi, \pi_2$ and $\pi_u$ for various hypergraphs.
  For the definition of the hypergraphs see Table~\ref{tab:3uniformfig}.
  Useful approximations: $2/7\approx 0.285714$,
  $2\sqrt{3}-3\approx 0.4641016$,
  $1/13\approx 0.0769230$,
  $4/43\approx 0.0930232$,
  $1/12\approx 0.0833333$.
  }
\end{table}

\newcommand\T{\rule{0pt}{2.6ex}}       
\newcommand\B{\rule[-1.2ex]{0pt}{0pt}} 

\begin{center}
  \setlength{\tabcolsep}{1pt}
  \renewcommand{\arraystretch}{2.0}
  \begin{longtable}{ | l|| c |  c | c  |   }
    \hline
    $H$ &   Edges   &  Visual 1 &  Visual 2   \\
    \hline\hline
    $K_4^3$ 
    &123, 124, 134, 234 
    &
\vc{\begin{tikzpicture}\outercycle{5}{4}
\drawhyperedge{0}{4}
\drawhypervertex{0}{0}
\drawhypervertex{1}{0}
\drawhypervertex{2}{0}
\drawhyperedge{1}{4}
\drawhypervertex{0}{1}
\drawhypervertex{1}{1}
\drawhypervertex{3}{1}
\drawhyperedge{2}{4}
\drawhypervertex{0}{2}
\drawhypervertex{2}{2}
\drawhypervertex{3}{2}
\drawhyperedge{3}{4}
\drawhypervertex{1}{3}
\drawhypervertex{2}{3}
\drawhypervertex{3}{3}
\end{tikzpicture} 
}    
    &
\vc{
\begin{tikzpicture}[scale=0.55]
\foreach \i in {1,2,3,4}{
\draw (90*\i-45:1.5) coordinate(\i);
}

\foreach \r in {0-45,90-45,180-45,270-45}{
\begin{scope}[rotate=\r]
\draw[hyperedge] 
(0:1.5) to[out=140,in=275,looseness=1.2] (90:1.5)  to[out=265,in=40,looseness=1.2] (180:1.5)  to[out=35,in=145,looseness=1.2] (0:1.5) 
;
\end{scope}
}

\draw 
(1) node[vtx,label=right:{\tiny 1}]{}
(2) node[vtx,label=left:{\tiny 2}]{}
(3) node[vtx,label=left:{\tiny 3}]{}
(4) node[vtx,label=right:{\tiny 4}]{}
;
\end{tikzpicture}
}      
   \\
\hline
    $K_4^{3-} = F_4$ 
    &123, 124, 134
    &
\vc{\begin{tikzpicture}\outercycle{5}{4}
\drawhyperedge{0}{4}
\drawhypervertex{0}{0}
\drawhypervertex{1}{0}
\drawhypervertex{2}{0}
\drawhyperedge{1}{4}
\drawhypervertex{0}{1}
\drawhypervertex{1}{1}
\drawhypervertex{3}{1}
\drawhyperedge{2}{4}
\drawhypervertex{0}{2}
\drawhypervertex{2}{2}
\drawhypervertex{3}{2}
\path (0,0.4) -- (0,-1.1); 
\end{tikzpicture} 
}    
    &
\vc{
\begin{tikzpicture}[scale=0.55]
\foreach \i in {1,2,3,4}{
\draw (90*\i-45:1.5) coordinate(\i);
}

\foreach \r in {0-45,90-45,270-45}{
\begin{scope}[rotate=\r]
\draw[hyperedge] 
(0:1.5) to[out=140,in=275,looseness=1.2] (90:1.5)  to[out=265,in=40,looseness=1.2] (180:1.5)  to[out=35,in=145,looseness=1.2] (0:1.5) 
;
\end{scope}
}

\draw 
(1) node[vtx,label=right:{\tiny 1}]{}
(2) node[vtx,label=left:{\tiny 2}]{}
(3) node[vtx,label=left:{\tiny 3}]{}
(4) node[vtx,label=right:{\tiny 4}]{}
;
\end{tikzpicture}
}      
   \\
   \hline
$F_{3,2}$ & 123, 145, 245, 345
&
\vc{\begin{tikzpicture}\outercycle{6}{5}
\drawhyperedge{0}{5}
\drawhypervertex{0}{0}
\drawhypervertex{1}{0}
\drawhypervertex{2}{0}
\drawhyperedge{1}{5}
\drawhypervertex{0}{1}
\drawhypervertex{3}{1}
\drawhypervertex{4}{1}
\drawhyperedge{2}{5}
\drawhypervertex{1}{2}
\drawhypervertex{3}{2}
\drawhypervertex{4}{2}
\drawhyperedge{3}{5}
\drawhypervertex{2}{3}
\drawhypervertex{3}{3}
\drawhypervertex{4}{3}
\end{tikzpicture} 
}
&
\vc{
\vc{
\begin{tikzpicture}
\draw
(0,-0.7) coordinate(1) node[vtx,label=left:{\tiny $1$}](a){}
(0,0) coordinate(2) node[vtx,label=below:{\tiny $2$}](b){}
(0,0.7) coordinate(3) node[vtx,label=left:{\tiny $3$}](c){}
(1,-0.5) coordinate(4) node[vtx,label=right:{\tiny $5$}](d){}
(1,0.5) coordinate(5) node[vtx,label=right:{\tiny $4$}](d){}
;
\draw[hyperedge] (1) to[bend left] (2) to[bend left] (3) to[bend right] (1);
\draw[hyperedge] (1) to[out=0,in=190] (4) to[out=190,in=250] (5) to[out=250,in=0] (1);
\draw[hyperedge] (2) to[out=0,in=140] (4) to[out=140,in=210,looseness=1.5] (5) to[out=210,in=0] (2);
\draw[hyperedge] (3) to[out=0,in=120,looseness=1.2] (4) to[out=120,in=170,looseness=1.2] (5) to[out=170,in=0,looseness=1.2] (3);
\end{tikzpicture}
}
}
   \\
   \hline
   $J_4$
   &
   123, 124, 125, 134, 135, 145
   &
\vc{\begin{tikzpicture}\outercycle{6}{5}
\drawhyperedge{0}{5}
\drawhypervertex{0}{0}
\drawhypervertex{1}{0}
\drawhypervertex{2}{0}
\drawhyperedge{1}{5}
\drawhypervertex{0}{1}
\drawhypervertex{1}{1}
\drawhypervertex{3}{1}
\drawhyperedge{2}{5}
\drawhypervertex{0}{2}
\drawhypervertex{1}{2}
\drawhypervertex{4}{2}
\drawhyperedge{3}{5}
\drawhypervertex{0}{3}
\drawhypervertex{2}{3}
\drawhypervertex{3}{3}
\drawhyperedge{4}{5}
\drawhypervertex{0}{4}
\drawhypervertex{2}{4}
\drawhypervertex{4}{4}
\drawhyperedge{5}{5}
\drawhypervertex{0}{5}
\drawhypervertex{3}{5}
\drawhypervertex{4}{5}
\end{tikzpicture} 
}   
   &
\vc{
\begin{tikzpicture}[scale=0.75]
\clip (-2.5,-1.1) rectangle (1.5,1.1);
\draw
(-2,0) coordinate(1) node[vtx,label=left:{\tiny $1$}](b){}
(45:1) coordinate(2) node[vtx,label=right:{\tiny $2$}](c){}
(135:1) coordinate(3) node[vtx,label=left:{\tiny $3$}](d){}
(225:1) coordinate(4) node[vtx,label=left:{\tiny $4$}](d){}
(315:1) coordinate(5) node[vtx,label=right:{\tiny $5$}](d){}
;
\draw[hyperedge] (1) to[out=70,in=140,looseness=0.9] (2) to[out=140,in=60,looseness=0.8]
(3) to[out=120,in=70,looseness=0.8] (1);
\draw[hyperedge] (1) to[out=0,in=250,looseness=1.3] (3) to[out=250,in=110] 
(4) to[out=110,in=0,looseness=1.3] (1);
\draw[hyperedge] (1) to[out=-70,in=240,looseness=0.8] (4) to[out=300,in=220,looseness=0.8] 
(5) to[out=220,in=-70,looseness=0.9] (1);
\draw[hyperedge] (1) to[out=0,in=110,looseness=0.7] (5) to[out=110,in=250,looseness=1.1] 
(2) to[out=250,in=0,looseness=0.7] (1);
\draw[hyperedge] (1) to[out=-25,in=205,looseness=0.8] (2) to[out=205,in=110,looseness=0.8] 
(4) to[out=110,in=-25,looseness=0.9] (1);
\draw[hyperedge] (1) to[out=25,in=250,looseness=0.9] (3) to[out=250,in=155,looseness=0.8] 
(5) to[out=155,in=25,looseness=0.8] (1);
\draw[line width = 0.5pt]
(3) to[out=250,in=155,looseness=0.8] (5)
(2) to[out=205,in=110,looseness=0.8] (4)
(5) to[out=110,in=250,looseness=1.1] (2)
(4) to[out=300,in=220,looseness=0.8] (5)
(3) to[out=250,in=110] (4)
(2) to[out=140,in=60,looseness=0.8] (3)
;
\end{tikzpicture}
}   
   \\
   \hline

   $J_5$
   &
   123, 124, 125, 126, 134, 135, 136, 145, 146, 156
   &
\vc{\begin{tikzpicture}\outercycle{7}{6}
\drawhyperedge{0}{6}
\drawhypervertex{0}{0}
\drawhypervertex{1}{0}
\drawhypervertex{2}{0}
\drawhyperedge{1}{6}
\drawhypervertex{0}{1}
\drawhypervertex{1}{1}
\drawhypervertex{3}{1}
\drawhyperedge{2}{6}
\drawhypervertex{0}{2}
\drawhypervertex{1}{2}
\drawhypervertex{4}{2}
\drawhyperedge{3}{6}
\drawhypervertex{0}{3}
\drawhypervertex{1}{3}
\drawhypervertex{5}{3}
\drawhyperedge{4}{6}
\drawhypervertex{0}{4}
\drawhypervertex{2}{4}
\drawhypervertex{3}{4}
\drawhyperedge{5}{6}
\drawhypervertex{0}{5}
\drawhypervertex{2}{5}
\drawhypervertex{4}{5}
\drawhyperedge{6}{6}
\drawhypervertex{0}{6}
\drawhypervertex{2}{6}
\drawhypervertex{5}{6}
\drawhyperedge{7}{6}
\drawhypervertex{0}{7}
\drawhypervertex{3}{7}
\drawhypervertex{4}{7}
\drawhyperedge{8}{6}
\drawhypervertex{0}{8}
\drawhypervertex{3}{8}
\drawhypervertex{5}{8}
\drawhyperedge{9}{6}
\drawhypervertex{0}{9}
\drawhypervertex{4}{9}
\drawhypervertex{5}{9}
\end{tikzpicture} 
}
&
\vc{
\begin{tikzpicture}
\draw
(-2,0) coordinate(1) node[vtx,label=left:{\tiny $1$}](1){}
\foreach \x in {2,...,6}
{
(-144+72*\x:1) coordinate(\x) node[vtx](v\x){}
}
;
\draw[hyperedge,opacity=0.2] (1)--(2)--(3)--(1);
\draw[hyperedge,opacity=0.2] (1)--(2)--(4)--(1);
\draw[hyperedge,opacity=0.2] (1)--(2)--(5)--(1);
\draw[hyperedge,opacity=0.2] (1)--(2)--(6)--(1);
\draw[hyperedge,opacity=0.2] (1)--(3)--(4)--(1);
\draw[hyperedge,opacity=0.2] (1)--(3)--(5)--(1);
\draw[hyperedge,opacity=0.2] (1)--(3)--(6)--(1);
\draw[hyperedge,opacity=0.2] (1)--(4)--(5)--(1);
\draw[hyperedge,opacity=0.2] (1)--(4)--(6)--(1);
\draw[hyperedge,opacity=0.2] (1)--(5)--(6)--(1);
\draw [line width = 0.5pt] (2)--(3)--(4)--(5)--(6)--(2)--(4)--(6)--(3)--(5)--(2)  ;
\draw (-2,0) coordinate(1) node[vtx](1){}
\foreach \x in {2,...,6}
{
(-144+72*\x:1) coordinate(\x) node[vtx](v\x){}
};
\end{tikzpicture}
}
   \\
   \hline

$F_{3,3}$
&
 123, 145, 146, 156, 245, 246, 256, 345, 346, 356
 &
\vc{\begin{tikzpicture}\outercycle{7}{6}
\drawhyperedge{0}{6}
\drawhypervertex{0}{0}
\drawhypervertex{1}{0}
\drawhypervertex{2}{0}
\drawhyperedge{1}{6}
\drawhypervertex{0}{1}
\drawhypervertex{3}{1}
\drawhypervertex{4}{1}
\drawhyperedge{2}{6}
\drawhypervertex{0}{2}
\drawhypervertex{3}{2}
\drawhypervertex{5}{2}
\drawhyperedge{3}{6}
\drawhypervertex{0}{3}
\drawhypervertex{4}{3}
\drawhypervertex{5}{3}
\drawhyperedge{4}{6}
\drawhypervertex{1}{4}
\drawhypervertex{3}{4}
\drawhypervertex{4}{4}
\drawhyperedge{5}{6}
\drawhypervertex{1}{5}
\drawhypervertex{3}{5}
\drawhypervertex{5}{5}
\drawhyperedge{6}{6}
\drawhypervertex{1}{6}
\drawhypervertex{4}{6}
\drawhypervertex{5}{6}
\drawhyperedge{7}{6}
\drawhypervertex{2}{7}
\drawhypervertex{3}{7}
\drawhypervertex{4}{7}
\drawhyperedge{8}{6}
\drawhypervertex{2}{8}
\drawhypervertex{3}{8}
\drawhypervertex{5}{8}
\drawhyperedge{9}{6}
\drawhypervertex{2}{9}
\drawhypervertex{4}{9}
\drawhypervertex{5}{9}
\end{tikzpicture} 
}

 &
\vc{
\begin{tikzpicture}
\draw
(0,-0.7) coordinate(1) node[vtx,label=left:{\tiny $1$}](a){}
(0,0) coordinate(2) node[vtx,label=left:{\tiny $2$}](b){}
(0,0.7) coordinate(3) node[vtx,label=left:{\tiny $3$}](c){}
(1,-0.7) coordinate(4) node[vtx,label=right:{\tiny $4$}](d){}
(1,0.0) coordinate(5) node[vtx,label=right:{\tiny $5$}](d){}
(1,0.7) coordinate(6) node[vtx,label=right:{\tiny $6$}](d){}
;
\draw[hyperedge] (1) to[bend left] (2) to[bend left] (3) to[bend right] (1);
\draw[hyperedge] (1) to[out=0,in=190] (4) to[out=190,in=250] (5) to[out=250,in=0] (1);
\draw[hyperedge] (2) to[out=0,in=140] (4) to[out=140,in=210,looseness=1.5] (5) to[out=210,in=0] (2);
\draw[hyperedge] (3) to[out=-60,in=120,looseness=1.2] (4) to[out=120,in=170,looseness=1.2] (5) to[out=170,in=-60,looseness=1.2] (3);
\draw[hyperedge] (1) to[out=60,in=190] (5) to[out=190,in=250] (6) to[out=250,in=60] (1);
\draw[hyperedge] (2) to[out=0,in=140] (5) to[out=140,in=210,looseness=1.5] (6) to[out=210,in=0] (2);
\draw[hyperedge] (3) to[out=0,in=120,looseness=1.2] (5) to[out=120,in=170,looseness=1.2] (6) to[out=170,in=0,looseness=1.2] (3);
\draw[hyperedge] (1) to[out=10,in=170,looseness=1.5] (4) to[out=170,in=190] (6) to[out=190,in=10] (1);
\draw[hyperedge] (2) to[out=0,in=140] (4) to[out=140,in=210,looseness=1.5] (6) to[out=210,in=0] (2);
\draw[hyperedge] (3) to[out=-10,in=120,looseness=1.2] (4) to[out=120,in=190,looseness=1.2] (6) to[out=190,in=-10,looseness=1.2] (3);
\end{tikzpicture}
}
   \\
   \hline
$F_5$
&
123, 124, 345
&
\vc{\begin{tikzpicture}\outercycle{6}{5}
\drawhyperedge{0}{5}
\drawhypervertex{0}{0}
\drawhypervertex{1}{0}
\drawhypervertex{2}{0}
\drawhyperedge{1}{5}
\drawhypervertex{0}{1}
\drawhypervertex{1}{1}
\drawhypervertex{3}{1}
\drawhyperedge{2}{5}
\drawhypervertex{2}{2}
\drawhypervertex{3}{2}
\drawhypervertex{4}{2}
\end{tikzpicture} 
}
&
\vc{
\begin{tikzpicture}
\path (0,0.7) -- (0,-0.7); 
\draw
(0, 0.5) coordinate(0) node[vtx,label=left:{\tiny $1$}]{}
(0, -0.5) coordinate(1) node[vtx,label=left:{\tiny $2$}]{}
(0.7, 0.5) coordinate(2) node[vtx,label=right:{\tiny $3$}]{}
(0.7,-0.5) coordinate(3) node[vtx,label=right:{\tiny $4$}]{}
(1.5, 0) coordinate(4) node[vtx,label=right:{\tiny $5$}]{}
;
\draw[hyperedge] (0) to[out=-10,in=190,looseness=1] (2) to[out=190,in=60,looseness=1] (1) to[out=60,in=-10,looseness=1] (0);
\draw[hyperedge] (1) to[out=10,in=170,looseness=1] (3) to[out=170,in=-60,looseness=1] (0) to[out=-60,in=10,looseness=1] (1);
\draw[hyperedge] (2) to[out=310,in=50,looseness=1] (3) to[out=50,in=180,looseness=1] (4) to[out=180,in=310,looseness=1] (2);
\end{tikzpicture}
} 
   \\
   \hline
 $\mathbb{F}$
 &
 123, 345, 156, 246, 147, 257, 367
 &
\vc{\begin{tikzpicture}\outercycle{8}{7}
\drawhypervertex{0}{0}
\drawhypervertex{1}{0}
\drawhypervertex{2}{0}
\drawhyperedge{1}{7}
\drawhypervertex{0}{1}
\drawhypervertex{3}{1}
\drawhypervertex{6}{1}
\drawhyperedge{2}{7}
\drawhypervertex{0}{2}
\drawhypervertex{4}{2}
\drawhypervertex{5}{2}
\drawhyperedge{3}{7}
\drawhypervertex{1}{3}
\drawhypervertex{3}{3}
\drawhypervertex{5}{3}
\drawhyperedge{4}{7}
\drawhypervertex{1}{4}
\drawhypervertex{4}{4}
\drawhypervertex{6}{4}
\drawhyperedge{5}{7}
\drawhypervertex{2}{5}
\drawhypervertex{3}{5}
\drawhypervertex{4}{5}
\drawhyperedge{6}{7}
\drawhypervertex{2}{6}
\drawhypervertex{5}{6}
\drawhypervertex{6}{6}
\end{tikzpicture} 
}
&
\vc{
\begin{tikzpicture}
\path (60:2.2) -- (0,-0.3); 
\draw
(60:2) coordinate(1) coordinate(7) node[vtx]{}
(60:1) coordinate(2) coordinate(8) node[vtx]{}
(0,0) coordinate(3) node[vtx]{}
(1,0) coordinate(4) node[vtx]{}
(2,0) coordinate(5) node[vtx]{}
++ (120:1) coordinate(6) node[vtx]{}
(30:1.2) coordinate(X) node[vtx]{}
;
\foreach \i in {1,3,5}{
\pgfmathtruncatemacro{\j}{\i+1}
\pgfmathtruncatemacro{\k}{\i+2}
\pgfmathtruncatemacro{\l}{\i+3}
\draw[hyperedge] (\i) to[bend right=10] (\j) to[bend right=10] (\k) to[bend left=10] (\i) ;

\draw[hyperedge] (\i) to[bend right=10] (X) to[bend right=10] (\l) to[bend left=10] (\i) ;

}

\draw[hyperedge] (2) to[bend right=40] (4) to[bend right=40] (6) to[bend left=80,looseness=2.5] (2) ;

\draw
\foreach \i in {1,2,3,4,5,6,X}{
(\i) node[vtx]{}
}
;

\end{tikzpicture}
} 
   \\
   \hline
$C_5$
&
123, 234, 345, 145, 125
&
\vc{\begin{tikzpicture}\outercycle{6}{5}
\drawhyperedge{0}{5}
\drawhypervertex{0}{0}
\drawhypervertex{1}{0}
\drawhypervertex{2}{0}
\drawhyperedge{1}{5}
\drawhypervertex{1}{1}
\drawhypervertex{2}{1}
\drawhypervertex{3}{1}
\drawhyperedge{2}{5}
\drawhypervertex{2}{2}
\drawhypervertex{3}{2}
\drawhypervertex{4}{2}
\drawhyperedge{3}{5}
\drawhypervertex{0}{3}
\drawhypervertex{3}{3}
\drawhypervertex{4}{3}
\drawhyperedge{4}{5}
\drawhypervertex{0}{4}
\drawhypervertex{1}{4}
\drawhypervertex{4}{4}
\end{tikzpicture} 
}
&
\vc{
\begin{tikzpicture}
\path (0,1.2) -- (0,-1.1); 
\draw
\foreach \i in {0,1,...,7}{
(90+72*\i:1) coordinate(\i) node[vtx]{}
}
;
\foreach \i in {0,1,2,3,4}{
\pgfmathtruncatemacro{\j}{\i+1}
\pgfmathtruncatemacro{\k}{\i+2}
\draw[hyperedge] (\i) to[out=230+72*\i,in=270+72*\j,looseness=1] (\j) to[out=270+72*\j,in=310+72*\k,looseness=1] (\k) to[out=310+72*\k,in=230+72*\i,looseness=1] (\i);
}
\end{tikzpicture}
} 
   \\
   \hline
$C_5^-$
&
123, 234, 345, 145
&
\vc{\begin{tikzpicture}\outercycle{6}{5}
\drawhyperedge{0}{5}
\drawhypervertex{0}{0}
\drawhypervertex{1}{0}
\drawhypervertex{2}{0}
\drawhyperedge{1}{5}
\drawhypervertex{1}{1}
\drawhypervertex{2}{1}
\drawhypervertex{3}{1}
\drawhyperedge{2}{5}
\drawhypervertex{2}{2}
\drawhypervertex{3}{2}
\drawhypervertex{4}{2}
\drawhyperedge{3}{5}
\drawhypervertex{0}{3}
\drawhypervertex{3}{3}
\drawhypervertex{4}{3}
\end{tikzpicture} 
}
&
\vc{
\begin{tikzpicture}
\path (0,1.2) -- (0,-1.1); 
\draw
\foreach \i in {0,1,...,7}{
(90+72*\i:1) coordinate(\i) node[vtx]{}
}
(0) node[left]{\tiny $1$}
(1) node[left]{\tiny $2$}
(2) node[left]{\tiny $3$}
(3) node[right]{\tiny $3$}
(4) node[right]{\tiny $3$}
;
\foreach \i in {0,1,2,3}{
\pgfmathtruncatemacro{\j}{\i+1}
\pgfmathtruncatemacro{\k}{\i+2}
\draw[hyperedge] (\i) to[out=230+72*\i,in=270+72*\j,looseness=1] (\j) to[out=270+72*\j,in=310+72*\k,looseness=1] (\k) to[out=310+72*\k,in=230+72*\i,looseness=1] (\i);
}
\end{tikzpicture}
} 
\\
   \hline
$\overline{K_5^-}$
&
123
&
\vc{\begin{tikzpicture}\outercycle{6}{5}
\drawhyperedge{0}{5}
\drawhypervertex{0}{0}
\drawhypervertex{1}{0}
\drawhypervertex{2}{0}
\end{tikzpicture} 
}
&
\vc{
\begin{tikzpicture}
\path (0,1.2) -- (0,-1.1); 
\draw
\foreach \i in {0,1,...,7}{
(90+72*\i:1) coordinate(\i) node[vtx]{}
}
(0) node[left]{\tiny $1$}
(1) node[left]{\tiny $2$}
(2) node[left]{\tiny $3$}
(3) node[right]{\tiny $3$}
(4) node[right]{\tiny $3$}
;
\foreach \i in {0}{
\pgfmathtruncatemacro{\j}{\i+1}
\pgfmathtruncatemacro{\k}{\i+2}
\draw[hyperedge] (\i) to[out=230+72*\i,in=270+72*\j,looseness=1] (\j) to[out=270+72*\j,in=310+72*\k,looseness=1] (\k) to[out=310+72*\k,in=230+72*\i,looseness=1] (\i);
}
\end{tikzpicture}
} 
   \\
   \hline   
$\overline{K_5^=}$
&
123, 345
&
\vc{\begin{tikzpicture}\outercycle{6}{5}
\drawhyperedge{0}{5}
\drawhypervertex{0}{0}
\drawhypervertex{1}{0}
\drawhypervertex{2}{0}
\drawhyperedge{1}{5}
\drawhypervertex{2}{1}
\drawhypervertex{3}{1}
\drawhypervertex{4}{1}
\end{tikzpicture} 
}
&
\vc{
\begin{tikzpicture}
\path (0,1.2) -- (0,-1.1); 
\draw
\foreach \i in {0,1,...,7}{
(90+72*\i:1) coordinate(\i) node[vtx]{}
}
(0) node[left]{\tiny $1$}
(1) node[left]{\tiny $2$}
(2) node[left]{\tiny $3$}
(3) node[right]{\tiny $3$}
(4) node[right]{\tiny $3$}
;
\foreach \i in {0,2}{
\pgfmathtruncatemacro{\j}{\i+1}
\pgfmathtruncatemacro{\k}{\i+2}
\draw[hyperedge] (\i) to[out=230+72*\i,in=270+72*\j,looseness=1] (\j) to[out=270+72*\j,in=310+72*\k,looseness=1] (\k) to[out=310+72*\k,in=230+72*\i,looseness=1] (\i);
}
\end{tikzpicture}
} 
   \\
   \hline
$\overline{K_5^<}$
&
123, 234
&
\vc{\begin{tikzpicture}\outercycle{6}{5}
\drawhyperedge{0}{5}
\drawhypervertex{0}{0}
\drawhypervertex{1}{0}
\drawhypervertex{2}{0}
\drawhyperedge{1}{5}
\drawhypervertex{1}{1}
\drawhypervertex{2}{1}
\drawhypervertex{3}{1}
\end{tikzpicture} 
}
&
\vc{
\begin{tikzpicture}
\path (0,1.2) -- (0,-1.1); 
\draw
\foreach \i in {0,1,...,7}{
(90+72*\i:1) coordinate(\i) node[vtx]{}
}
(0) node[left]{\tiny $1$}
(1) node[left]{\tiny $2$}
(2) node[left]{\tiny $3$}
(3) node[right]{\tiny $3$}
(4) node[right]{\tiny $3$}
;
\foreach \i in {0,1}{
\pgfmathtruncatemacro{\j}{\i+1}
\pgfmathtruncatemacro{\k}{\i+2}
\draw[hyperedge] (\i) to[out=230+72*\i,in=270+72*\j,looseness=1] (\j) to[out=270+72*\j,in=310+72*\k,looseness=1] (\k) to[out=310+72*\k,in=230+72*\i,looseness=1] (\i);
}
\end{tikzpicture}
} 
   \\
   \hline    
    \caption{\label{tab:pic}Description of some $3$-uniform hypergraphs. 
    }
    \label{tab:3uniformfig}
  \end{longtable}
\end{center}

\begin{theo}
\label{F5exact}
There exists a number $n_0$ such that for all $n\geq n_0$
\begin{align*}
    \textup{exco}_2(n,F_5)=\textup{co}_2(S_n).
\end{align*}
Furthermore, $S_n$ is the unique $F_5$-free $3$-graph $G$ on $n$ vertices satisfying $\textup{co}_2(G)=\textup{exco}_2(n,F_{5})$.
\end{theo}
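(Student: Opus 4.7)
The plan is to combine the asymptotic flag-algebra bound with a stability and cleaning argument, following the Erd\H{o}s--Simonovits framework.

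\textbf{Step 1 (asymptotic bound and stability).} By Theorem~\ref{tabletheo}, $\sigma(F_5)\le 2/27$, matching the construction $S_n$, so $\textup{exco}_2(n,F_5) = (1+o(1))\textup{co}_2(S_n)$. Writing $\Delta_2(G)$ for the maximum codegree, the inequality $\textup{co}_2(G)\le 3\Delta_2(G)\cdot e(G)$ forces $e(G)=(1+o(1))|E(S_n)|$ for near-extremal $G$. Combining this with Frankl's exact $\ell_1$-Tur\'an result $\textup{ex}(n,F_5)=|E(S_n)|$ together with its stability version, we obtain: for every $\eta>0$ there exists $\delta>0$ such that any $F_5$-free $G$ with $\textup{co}_2(G)\ge(1-\delta)\textup{co}_2(S_n)$ admits a partition $V(G)=V_1\cup V_2\cup V_3$ having fewer than $\eta n^3$ \emph{bad} edges, where ``bad'' means having at least two vertices in a common $V_i$.

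\textbf{Step 2 (elimination of bad edges).} Let $G$ be extremal with its partition from Step~1. Suppose $e=\{x,y,z\}$ is a bad edge with $x,y\in V_1$. Since almost every crossing triple is an edge of $G$, we can find $(v_2,v_3)\in(V_2\setminus\{z\})\times(V_3\setminus\{z\})$ such that both $\{x,v_2,v_3\}$ and $\{y,v_2,v_3\}$ are edges. The three triples $\{x,v_2,v_3\}$, $\{y,v_2,v_3\}$, $\{x,y,z\}$ then form a copy of $F_5$ via the identification $1\leftrightarrow v_2,\;2\leftrightarrow v_3,\;3\leftrightarrow x,\;4\leftrightarrow y,\;5\leftrightarrow z$, contradicting $F_5$-freeness. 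The remaining bad-edge configurations (all three vertices in one part, or two in one part and the third in $V_2$ or $V_3$) reduce to the same argument. Hence $G$ is strictly $3$-partite.

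\textbf{Step 3 (completeness and balance).} Any strictly $3$-partite $3$-graph is $F_5$-free: if it contained $\{1,2,3\},\{1,2,4\},\{3,4,5\}$, then $1,2$ would lie in two distinct parts and $3,4$ in the third, so $\{3,4,5\}$ could not be crossing. Thus adding any missing crossing triple to $G$ preserves $F_5$-freeness and strictly increases $\textup{co}_2$; by maximality, $G = K^{(3)}_{a,b,c}$ with $(a,b,c)=(|V_1|,|V_2|,|V_3|)$. A direct calculation gives $\textup{co}_2(K^{(3)}_{a,b,c}) = abc\cdot n$, which is uniquely maximized over positive-integer partitions of $n$ by the balanced triple, so $G=S_n$.

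\textbf{Main obstacle.} The crux is Step~2: Step~1 only bounds the total number of bad edges, whereas Step~2 must treat each bad edge individually. This requires strengthening Step~1 to a quantitative codegree-type statement guaranteeing that for every intra-part pair $\{x,y\}$ the joint neighbourhood of $x$ and $y$ in $V_2\times V_3$ remains nearly complete. Establishing this, either through a sharper flag-algebra analysis or via an iterative vertex-cleaning built atop Frankl's $\ell_1$-stability, is where the bulk of the technical effort will lie.
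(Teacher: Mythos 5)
Your high-level scaffold (stability, then cleaning to strict $3$-partiteness, then optimization over complete $3$-partite graphs) is the right shape, and Step~3 is essentially correct and matches what the paper does at the end. However, there are two problems, one of which you have correctly flagged yourself.

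First, Step~1 does not go through as written. The inequality $\textup{co}_2(G)\le 3\Delta_2(G)\cdot e(G)$ with the trivial bound $\Delta_2(G)\le n$ only gives $e(G)\gtrsim n^3/81$, not $e(G)=(1+o(1))|E(S_n)|\approx n^3/27$; to make your deduction work you would first need $\Delta_2(G)\le (1/3+o(1))n$, which is not available a priori. The paper explicitly remarks on exactly this pitfall (``having high codegree square sum does not imply the existence of many edges'') and for that reason does \emph{not} route through Frankl--F\"uredi's $\ell_1$-stability. Instead, Theorem~\ref{F5stability} is proved directly by a Bollob\'as-style greedy argument: take a pair $\{v_1,v_2\}$ of maximum codegree and set $A'=N(v_1,v_2)$; $F_5$-freeness forces $A'$ to be essentially independent; iterate twice more to get $B', C'$, then compare $\textup{co}_2(G)$ against a polynomial whose unique maximizer on the simplex is the balanced point. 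This both produces the partition and controls the class sizes in one step, avoiding the $\ell_2$-to-$\ell_1$ translation entirely.

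Second, the gap you name in Step~2 is indeed where the bulk of the work lies, and it is not a minor quantification issue: the stability statement controls only the aggregate count of bad edges, so two vertices $x,y$ in the same part could both be ``junk'' (sparse in $V_2\times V_3$) and your argument that a common cross-neighbour $(v_2,v_3)$ exists fails. The paper resolves this by first proving the exact result under a per-vertex minimum-``degree''-type hypothesis on $q(x)=\sum_{y}d(x,y)^2+2\sum_{vw\in E(L(x))}d(v,w)$ (Theorem~\ref{F5mindegasumption}), using a sequence of cleaning claims (defining junk sets $J_A,J_B,J_C$, bounding $|L_A(v)|$, etc.) which show that any surviving bad edge would force $q(y)$ of some vertex $y$ below the threshold. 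Then the unconditional Theorem~\ref{F5exact} follows by iteratively deleting vertices with small $q(x)$ and tracking how $\textup{co}_2$ compares to $\textup{co}_2(S_m)$ along the way. So your instinct about where the difficulty lies is right, but the missing machinery --- the $q(x)$ threshold, the iterative vertex removal, and the junk-set cleaning --- is essentially the whole proof, not a refinement of it.
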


The \emph{loose $s$-path} $P^k_s$ is the $k$-graph with $s$ edges $\{e_1,\ldots, e_s\}$ such that $|e_i\cap e_j|=1$ if $|i-j|=1$ and $e_i\cap e_j=\emptyset$ otherwise. The \emph{loose $s$-cycle} $C^k_s$ is the $k$-uniform hypergraph with $s$ edges $\{e_1,\ldots, e_s\}$ obtained from an $(s-1)$-path $\{e_1,\ldots, e_{s-1}\}$ by adding an edge $e_s$ that shares one vertex with $e_1$, another vertex with $e_{s-1}$ and is disjoint from the other edges.

\begin{theo}\label{codcyclepath}
Let $s\geq 4$. Then,
\begin{align*}
    \textup{exco}_2(n,C^3_s)=\left \lfloor \frac{s-1}{2}\right \rfloor n^3(1+o(1)) \quad \quad \text{and} \quad \quad \textup{exco}_2(n,P^3_s)=\left \lfloor \frac{s-1}{2}\right \rfloor n^3(1+o(1)).
\end{align*}
\end{theo}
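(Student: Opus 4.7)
For the lower bound, fix $S\subset[n]$ with $|S|=t:=\lfloor(s-1)/2\rfloor$ and let $G$ consist of every triple meeting $S$. Any loose $s$-edge path or cycle in $G$ would require at least $\lceil s/2\rceil=t+1$ distinct vertices of $S$ (consecutive edges may share an $S$-vertex, but non-consecutive edges are disjoint), so $G$ is both $P_s^3$-free and $C_s^3$-free. Pairs meeting $S$ have codegree $n-2$ and pairs outside $S$ have codegree $t$, giving
\[
\textup{co}_2(G)=\left(\binom{t}{2}+t(n-t)\right)(n-2)^2+\binom{n-t}{2}t^2=tn^3+O(n^2).
\]

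For the upper bound, let $G$ be $P_s^3$-free (the $C_s^3$-free case is analogous, with graph-cycle lifts replacing graph-path lifts). By the Füredi--Jiang--Seiver and Kostochka--Mubayi--Verstraëte loose-path theorems, $e(G)\leq(t/2)n^2(1+o(1))$. Choose $\alpha=\alpha(n)\to 0$ with $\alpha n\to\infty$, set $H_\alpha:=\{xy\in\binom{[n]}{2}:d_G(xy)\geq\alpha n\}$, and split
\[
\textup{co}_2(G)\leq|H_\alpha|(n-2)^2+\alpha n\cdot 3e(G).
\]
The second term is $o(n^3)$, so it suffices to show $|H_\alpha|\leq tn+o(n)$.

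View $H_\alpha$ as a graph on $[n]$. Any $s$-edge path $v_0v_1\cdots v_s$ in $H_\alpha$ lifts to a loose $P_s^3$ in $G$: greedily pick $u_i\in N_G(v_iv_{i+1})$ avoiding the at most $2s$ already-used vertices; since $|N_G(v_iv_{i+1})|\geq\alpha n\gg 2s$ this succeeds, contradicting $P_s^3$-freeness. Hence $H_\alpha$ is $P_{s+1}$-free as a graph, and Erd\H{o}s--Gallai yields $|H_\alpha|\leq(s-1)n/2$. For odd $s$ this equals $tn$, and together with the split above one obtains $\textup{co}_2(G)\leq tn^3(1+o(1))$.

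For even $s$ the previous bound is $(s-1)n/2=tn+n/2$, overshooting the target by $n^3/2$; this is the central technical obstacle. I would close the gap by showing $H_\alpha$ has no $(s-1)$-edge path either, so Erd\H{o}s--Gallai gives the sharp $|H_\alpha|\leq(s-2)n/2=tn$. Given a lifted loose $P_{s-1}^3$ arising from an $(s-1)$-edge path in $H_\alpha$, an averaging over the $\geq\alpha n$ choices for the final endpoint $u_{s-2}\in N_G(v_{s-2}v_{s-1})$, combined with the $\ell_1$ edge bound on $G$, should produce some $u_{s-2}$ admitting a further $G$-edge at that endpoint disjoint from the lifted path, yielding a loose $P_s^3$ and the desired contradiction. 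Making the extension robust across every $(s-1)$-edge path---and handling the analogous cycle-lifting step for $C_s^3$---is the main work of the proof.
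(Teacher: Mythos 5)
Your lower bound is correct and identical to the paper's (the star of a $t$-set, $t=\lfloor(s-1)/2\rfloor$). Your upper-bound strategy, however, takes a genuinely different route, and it has real gaps that your own sketch cannot close.

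Your decomposition thresholds on the codegree of a \emph{pair}: you define $H_\alpha=\{xy:d_G(xy)\geq\alpha n\}$ and reduce to proving a sharp bound $|H_\alpha|\leq tn(1+o(1))$. The paper instead uses the identity $\textup{co}_2(H)=\sum_{e\in E(H)}w_H(e)$ (sum of edge-weights $w_H(xyz)=d(x,y)+d(y,z)+d(z,x)$) and thresholds on the weight of an \emph{edge}: it splits off the subhypergraph $H_{2s}$ of edges with $w_H(e)\geq 2n+2s$, observes that every pair inside such an edge has codegree at least $2s$, and shows $|E(H_{2s})|=o(n^2)$ via the ``$d$-full'' machinery (Lemmas 3.1, 3.2, and crucially Lemma 5.1 of Kostochka--Mubayi--Verstra\"ete). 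The remaining edges each contribute at most $2n+2s$, so the main term is $(2n+2s)|E(H)|\leq t n^3(1+o(1))$. The key structural difference is that the paper only needs the \emph{soft} estimate $|E(H_{2s})|=o(n^2)$, whereas you need a \emph{sharp} count $|H_\alpha|\leq tn(1+o(1))$; this is exactly where your argument fails.

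The gap for even $s$ is genuine and your proposed fix is false. You claim $H_\alpha$ has no $(s-1)$-edge graph path; here is a counterexample for $s=4$. Take $G$ on $[n]$ with edge set $\{12x,\,23x,\,34x:\ 5\leq x\leq n\}$. Any two edges of the same type share two vertices, and any loose path uses at most one edge per type, so $G$ is $P_4^3$-free; yet $d_G(12)=d_G(23)=d_G(34)=n-4$, so $H_\alpha\supseteq\{12,23,34\}$ is a $3=(s-1)$-edge path. Moreover the averaging repair you sketch also fails on this example: no lift of the $3$-edge path to a loose $P_3^3$ in $G$ admits any further edge meeting it in exactly one vertex. (Here $|H_\alpha|$ happens to be tiny, so the final inequality is not violated --- but your argument for it collapses, and nothing in your plan supplies the needed $|H_\alpha|\leq tn(1+o(1))$ in general.)

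The cycle case is worse and cannot be treated ``analogously'' by graph-cycle lifts. From $C_s^3$-freeness you can only deduce that $H_\alpha$ has no graph $s$-cycle. For odd $s$ this is essentially no constraint: a $C_s^2$-free graph may be complete bipartite with $\Theta(n^2)$ edges, so $|H_\alpha|$ is uncontrolled. For even $s$, Bondy--Simonovits gives $|H_\alpha|=O(n^{1+2/s})$, which is still $\omega(n)$ and so far from the required $tn(1+o(1))$. The paper avoids both problems: for even $s$ the Bondy--Simonovits bound $o(n^2)$ is \emph{enough} because it only needs a soft bound on the heavy part, and for odd $s$ it invokes the specialized Lemma~5.1 of Kostochka--Mubayi--Verstra\"ete (their Lemma~\ref{sashadfull3}), which extracts a loose $P_s^3$ or $C_s^3$ from $\Omega(n^2)$ shadow edges together with extra codegree conditions that hold precisely because $w_H(e)\geq 2n+2s$ forces two of the three pair-codegrees in $e$ to be large.

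In short: the weight-threshold decomposition is not a cosmetic variant of yours --- it is the device that reduces the problem to $o(n^2)$ rather than $O(n)$ bounds, which is essential for cycles and for even $s$. To rescue your approach you would need a sharp $\ell_1$-type theorem about the number of ``high-codegree pairs'' in $P_s^3$-free or $C_s^3$-free hypergraphs, and that is not supplied by graph Tur\'an results on $H_\alpha$ alone.
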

Denote by $M_s^3$ the \emph{$3$-uniform matching} of size $s$, i.e., the $3$-uniform hypergraph on $3s$ vertices with $s$ pairwise disjoint edges. 
\begin{theo}\label{codmatching}
Let $s\geq 2$. Then,
\begin{align*}
    \textup{exco}_2(n,M_s^3)=(s-1)n^3(1+o(1)).
\end{align*}
\end{theo}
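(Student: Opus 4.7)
For the lower bound, I would use the star-like construction: fix $S \subseteq [n]$ with $|S|=s-1$ and let $G_0$ be the $3$-graph whose edge set consists of all triples meeting $S$. Any matching in $G_0$ requires one distinct vertex of $S$ per edge, so $G_0$ is $M_s^3$-free. Splitting the pairs $\{x,y\}$ according to $|\{x,y\}\cap S|$, the $(s-1)(n-s+1)$ pairs with exactly one endpoint in $S$ each have codegree $n-2$, and this dominates the sum; a direct computation gives $\textup{co}_2(G_0) = (s-1) n^3 (1+o(1))$.

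For the upper bound, let $G$ be an $M_s^3$-free $n$-vertex $3$-graph and fix a small $\varepsilon>0$. Partition $\binom{V(G)}{2} = A \cup B$ with $A = \{\{x,y\}: d_G(x,y) \geq \varepsilon n\}$. Every edge of $G$ meets the vertex set of a maximum matching $M$, which has at most $3(s-1)$ vertices, so the crude bound $e(G) \leq 3(s-1)\binom{n-1}{2}$ holds. Since $\sum_{xy} d_G(x,y) = 3 e(G)$, the low-codegree contribution satisfies
\[
\sum_{\{x,y\}\in B} d_G(x,y)^2 \leq \varepsilon n \sum_{\{x,y\}\in B} d_G(x,y) \leq 3\varepsilon n \cdot e(G) = O_s(\varepsilon \, n^3).
\]

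The heart of the argument is bounding $|A|$. Regarding $A$ as a graph on $V(G)$, I claim that its matching number is at most $s-1$. Suppose not, and pick $s$ vertex-disjoint pairs $x_1 y_1, \ldots, x_s y_s \in A$. Since $d_G(x_i, y_i) \geq \varepsilon n$ for each $i$, I greedily choose
\[
z_i \in N_G(x_i, y_i) \setminus \bigl(\{x_j, y_j : j \neq i\} \cup \{z_1, \ldots, z_{i-1}\}\bigr),
\]
which is possible because fewer than $3s$ vertices are excluded from at least $\varepsilon n$ candidates (for $n$ large). The resulting $\{x_i, y_i, z_i\}$ form $M_s^3 \subseteq G$, a contradiction. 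The Erd\H{o}s--Gallai theorem for matchings then gives $|A| \leq (s-1)(n-s+1) + \binom{s-1}{2} \leq (s-1)n$, so
\[
\sum_{\{x,y\}\in A} d_G(x,y)^2 \leq |A|(n-2)^2 \leq (s-1) n^3 (1+o(1)).
\]
Combining the two regimes and letting $\varepsilon \to 0$ after $n \to \infty$ yields $\textup{exco}_2(n, M_s^3) \leq (s-1) n^3 (1+o(1))$, matching the lower bound.

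The main obstacle is the greedy extension establishing $\nu(A) \leq s-1$: one must ensure that after excluding the other matched endpoints and the previously chosen $z_j$, enough neighbours in each $N_G(x_i, y_i)$ remain. This forces $n$ to be large relative to $s/\varepsilon$ but is otherwise routine, and once the reduction to Erd\H{o}s--Gallai is in place the remaining estimates are mechanical.
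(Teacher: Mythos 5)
Your proof is correct and follows essentially the same route as the paper: both use the extremal construction of all triples meeting a fixed $(s-1)$-set for the lower bound, and for the upper bound both observe that the high-codegree pairs form a graph whose matching number is at most $s-1$ (via the same greedy extension to an $M_s^3$), then invoke Erd\H{o}s--Gallai and split the codegree-squared sum accordingly. The only cosmetic difference is the threshold: you use $\varepsilon n$ and pass to $\varepsilon\to 0$ at the end, while the paper uses the fixed threshold $3s$, which makes the low-codegree contribution $O_s(n^2)=o(n^3)$ outright and avoids the extra limit.
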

Denote by $S_s^3$ the \emph{star} with $s$ edges such that the intersection of any pair of edges is exactly the same vertex. 
\begin{theo}
\label{codstar}
Let $s\geq 3$. If $s$ is odd, then
\begin{align*}
    \textup{exco}_2(n,S_s^3)=s(s-1)n^2 (1+o(1)).
\end{align*}
If $s$ is even, then
\begin{align*}
    \textup{exco}_2(n,S_s^3)=\left(s^2-\frac{3}{2}s \right)n^2 (1+o(1)).
\end{align*}
\end{theo}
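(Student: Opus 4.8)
The plan is to determine, asymptotically, the maximum of $\textup{co}_2(G)$ over $S_s^3$-free $3$-graphs $G$ on $n$ vertices by observing that being $S_s^3$-free is essentially a \emph{local} condition: a $3$-graph $G$ contains $S_s^3$ with center $v$ precisely when the link graph $L_v$ of $v$ (the graph on $V(G)\setminus\{v\}$ whose edges are the pairs $yz$ with $vyz\in E(G)$) contains a matching of size $s$. So $G$ is $S_s^3$-free if and only if for every vertex $v$ the link graph $L_v$ has matching number $\nu(L_v)\le s-1$. By the Erd\H{o}s--Gallai theorem, a graph on $n-1$ vertices with no matching of size $s$ has at most $\max\{\binom{2s-1}{2},\ \binom{s-1}{2}+(s-1)(n-s)\}=(s-1)n(1+o(1))$ edges; for $n$ large the second term dominates, so $|L_v|\le (s-1)n(1+o(1))$, i.e. each vertex has degree $d(v)\le 2(s-1)n(1+o(1))$ (each edge of $L_v$ accounts for two incidences). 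The key point is that we must control not just $\sum_v d(v)$ but the codegrees $d(x,y)$; note $\textup{co}_2(G)=\sum_{xy}d(x,y)^2$ and also $\sum_{xy:\,xy\in L_v} 1 = |L_v|$, so summing codegrees squared will be linked to how the edges at each vertex are distributed among pairs.

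First I would set up the exact local characterization and record the Erd\H{o}s--Gallai bound on $|L_v|$. The extremal link graphs for Erd\H{o}s--Gallai (for large $n$) are, up to lower-order terms, $K_{s-1}$ joined to an independent set on the remaining vertices — equivalently, there is a set $A_v$ of $\le s-1$ ``apex'' vertices covering almost all edges of $L_v$. Passing to the whole hypergraph, this means almost every edge of $G$ meets, for each of its three vertices' ``center role,'' a bounded cover; iterating/combining these covers should show that $G$ is (after deleting $o(n^3)$ contributions, or rather $o(n^2)$ per pair) essentially supported on triples using a bounded set of special vertices. For the upper bound on $\textup{co}_2$: since $d(x,y)\le n$ trivially and the number of pairs $\{x,y\}$ with $d(x,y)$ ``large'' is limited by the cover structure, I would split $\textup{co}_2(G)=\sum_{xy}d(x,y)^2$ into pairs with a large codegree (there are $O(n)$ of these, coming from the bounded covers, each contributing $\le n^2$, total $O(n^3)$ — already too big, so this needs the finer parity analysis below) versus pairs with bounded codegree. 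The parity split $s$ odd vs. $s$ even in the statement signals that the true extremal configuration is not a single clean join: for $s$ odd the Erd\H{o}s--Gallai extremal graph on the link is a disjoint ``star-like'' $K_{s-1}\ast \overline{K}$, while for $s$ even there is an extra component (a triangle/$K_3$ attached), changing the constant from $s(s-1)$ to $s^2-\tfrac32 s$.

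The cleanest route is probably to identify the conjectured extremal $3$-graph $G^*$ explicitly and prove matching bounds. For $s$ odd: take a set $W$ of $s-1$ vertices and let $G^*$ consist of all triples meeting $W$ in at least two vertices, or the ``book''-type construction where a fixed pair is in many edges; compute $\textup{co}_2(G^*)$ and check it equals $s(s-1)n^2(1+o(1))$ — here the dominant contribution is $\binom{s-1}{2}$-ish pairs inside $W$ each with codegree $\sim n$, but $\binom{s-1}{2}n^2 = \tfrac{(s-1)(s-2)}{2}n^2$, which does \emph{not} match, so the real extremal construction must spread a larger but still $O(n)$-sized family of pairs each to near-full codegree. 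The correct construction: a vertex $v_0$ whose link is $K_{s-1}$ joined to $\overline{K}_{n-s}$; then pairs $\{v_0, u\}$ for $u$ in the $(s-1)$-clique part have codegree $\sim n$, giving $(s-1)$ pairs, plus pairs within the clique, etc. — one carefully optimizes over having several such ``centers.'' The main obstacle is exactly this optimization: turning the local Erd\H{o}s--Gallai bound into a \emph{global} bound on $\sum_{xy}d(x,y)^2$ requires showing the per-vertex link structures fit together without overcounting, and pinning down the extremal global configuration (and hence the constants $s(s-1)$ and $s^2-\tfrac32 s$, with the parity coming from whether $s-1$ is even, i.e. whether the Erd\H{o}s--Gallai extremal graph can be taken to be a disjoint union of edges versus needing one triangle). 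I expect to handle the upper bound by a convexity/double-counting argument: $\textup{co}_2(G)=\sum_v\sum_{x}\bigl(\text{number of common neighbors of }v,x\bigr)$-type identities reducing everything to counting, inside each link graph $L_v$ (which has $\nu\le s-1$), the quantity $\sum_{x}d_{L_v}(x)^2$ or the number of paths of length $2$ — and Erd\H{o}s--Gallai-type extremal results for these degree-power sums in matching-number-bounded graphs give precisely the claimed constant with the stated parity dependence. Finally, the lower bound is the explicit construction verified by direct computation.
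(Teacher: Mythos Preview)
Your proposal has the right local characterization ($G$ is $S_s^3$-free iff every link graph $L_v$ has $\nu(L_v)\le s-1$), but it never closes the gap you yourself flag as ``the main obstacle'': converting the per-vertex Erd\H{o}s--Gallai bound into a global bound on $\sum_{xy}d(x,y)^2$. Indeed, $d(x,y)=d_{L_x}(y)$, so $\textup{co}_2(G)=\tfrac12\sum_v\sum_y d_{L_v}(y)^2$; maximizing $\sum_y d_{L_v}(y)^2$ over graphs with $\nu\le s-1$ on $n-1$ vertices gives roughly $(s-1)n^2$ (attained by $K_{s-1}\ast\overline{K}_{n-s}$), and naively summing over $v$ produces $\Theta(n^3)$, off by a factor of $n$. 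The link graphs are far from independent, and nothing in the proposal explains how the global $S_s^3$-freeness forces them to be small simultaneously. Your suggested fix via ``degree-power sums in matching-bounded graphs'' does not address this consistency issue. Separately, the lower-bound constructions you sketch are incorrect: the extremal examples are those of Chung and Frankl (for $s$ odd, two disjoint $s$-cliques with all triples meeting one of them in at least two vertices; for $s$ even a more intricate graph on $2s-1$ base vertices), and neither of your guesses recovers the constants $s(s-1)$ or $s^2-\tfrac32 s$.

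The paper's argument bypasses the consistency problem entirely by working with the identity $\textup{co}_2(H)=\sum_{e\in E(H)}w_H(e)$ and the known $\ell_1$-extremal result of Chung--Frankl. One shows that the ``heavy'' edges (those with $w_H(e)\ge n+4s$) form a sub-$3$-graph $H'$ that is $M_{3s}^3$-free, has all codegrees below $2s$, and hence satisfies $|E(H')|=O_s(1)$. Then $\textup{co}_2(H)\le 3n\cdot|E(H')|+(n+4s)\cdot|E(H)|=(n+O(s))\cdot\textup{ex}(n,S_s^3)+O(n)$, and plugging in Chung--Frankl's value for $\textup{ex}(n,S_s^3)$ gives the result directly, with the parity dependence inherited from their theorem rather than derived anew. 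The lower bound is just the computation of $\textup{co}_2$ on the Chung--Frankl extremal $3$-graphs.
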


In this work, we are not intending to duplicate or replace the excellent survey~\cite{Keevashsurvey} by Keevash; our aim is to supplement it with new results and directions.

Our paper is organized as follows; in Section~\ref{prepa} we explain our notation and explain the flag algebra technique briefly.
In Section~\ref{boundsfortable} we present the constructions leading to the bounds on $\sigma,\pi,\pi_u,\pi_2$ in Table~\ref{tab:exactcod2} and state conjectures on some of the non-sharp results.
In Section~\ref{F5section} we present the proof of Theorem~\ref{F5exact}. In Sections~\ref{sec:cycle}, \ref{sec:matching} and \ref{sec:star} we prove Theorems~\ref{codcyclepath}, \ref{codmatching} and \ref{codstar}, respectively.  Finally, we present related questions for higher uniformities in Section~\ref{openques}. 

\section{Preliminaries}
\label{prepa}

\subsection{Terminology and notation}
Let $H$ be a $3$-uniform hypergraph, $x,y,z\in V(H)$ and $A,B,C\subseteq V(H)$ be pairwise disjoint sets. We will use the following notation throughout the paper.
\begin{itemize}[leftmargin=*]
\item For an edge $e=\{x,y,z\}\in E(H)$ we write $xyz$ for convenience. 
    \item Denote by $L(x)$ the link graph of $x$, i.e., the graph on $V(H)\setminus \{x\}$ with $ab\in E(L(x))$ iff $abx\in E(H)$. 
    \item Denote by $L_A(x)=L(x)[A]$ the induced link graph on $A$. 
    \item Denote by $L_{A,B}(x)$ the subgraph of the link graph of $x$ only containing edges between $A$ and $B$, i.e., $V(L_{A,B}(x))=V(H)\setminus\{x\}$ and $ab\in E(L_{A,B}(x))$ iff $a\in A, b\in B$ and $abx\in E(H)$.
    \item Denote by $L^c_{A,B}(x)$ the subgraph of the link graph of $x$ only containing non-edges between $A$ and $B$, i.e., $V(L_{A,B}(x))=V(H)\setminus\{x\}$ and $ab\in E(L^c_{A,B}(x))$ iff $a\in A, b\in B$ and $abx \not\in E(H)$.
    \item $e(A,B)$ denotes the number of cross-edges between $A$ and $B$, this means \\ $e(A,B):=|\{xyz\in E(H): x,y\in A, z\in B \}|+ |\{xyz\in E(H): x,y\in B, z\in A \}|.$
    \item Denote $e(A,B,C)$ the number of cross-edges between $A,B$ and $C$, i.e.,\\ $e(A,B,C):=|\{abc\in E(H): a\in A, b\in B,c\in C \}|$. 
    \item Let $e=xyz\in E(H)$ be an edge. Define the weight of $e$ to be
    \[
    w_H(e)=d(x,y)+d(x,z)+d(y,z).
    \]
    We drop the index if the hypergraph $H$ is clear from the context. 
    \item The shadow graph $\delta H$ is the subset of all two element subsets of $V(H)$ that contains all pairs contained in some $e\in E(H)$.
\end{itemize}
\begin{defn}
Let $H$ be a $k$-graph and $t\in \mathbb{N}$. The \emph{blow-up} $H(t)$ of $H$ is the $k$-graph obtained by replacing each vertex $x\in V(H)$ by $t$ vertices $x^1,\ldots,x^t$ and each edge $x_1 \cdots x_k\in E(H)$ by $t^k$ edges $x_1^{a_1}\cdots x_k^{a_k}$ with $1 \leq  a_1,\ldots, a_k \leq t$. 
\end{defn}

\subsection{The uniform Tur\'an density}
\label{sec:uniformturan}
Recently, Reiher, R\"{o}dl and Schacht~\cite{MR3764068} introduced the uniform Tur\'an density. 
They characterised $3$-graphs $H$ satisfying $\pi_u(H)=0$.
\begin{theo}[Reiher, R\"{o}dl and Schacht~\cite{MR3764068}]
\label{uniformturan}
For a $3$-uniform hypergraph $H$, the following are equivalent:
\begin{itemize}
    \item[(a)] $\pi_u(H)=0$.
    \item[(b)] There is a labeling of the vertex set $V(H)=\{v_1,\ldots,v_f\}$ and there is a three-colouring $\phi: \delta F \rightarrow \{\text{red, blue, green}\}$ of the pairs of vertices covered by hyperedges of $H$ such that every hyperedge $v_iv_jv_k\in E(H)$ with $i<j<k$ satisfies
\begin{align*}
    \phi(v_i,v_j)=
\text{red}, \quad     \phi(v_i,v_k)=
\text{blue}, \quad     \phi(v_j,v_k)=
\text{green}.
\end{align*}

\end{itemize}
\end{theo}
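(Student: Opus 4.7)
The plan is to prove both implications separately. Direction (a) $\Rightarrow$ (b) I will prove by contrapositive using an explicit random construction; direction (b) $\Rightarrow$ (a) is the harder embedding direction and requires a hypergraph regularity argument.

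For (a) $\Rightarrow$ (b), assume $H$ admits no such labelling and 3-colouring; I will exhibit a positive-density uniformly dense $H$-free 3-graph. Take a uniformly random edge-colouring $\chi:E(K_n)\to\{\text{red},\text{blue},\text{green}\}$ in which each edge independently picks a colour with probability $1/3$. Let $G_n$ on $[n]$ contain the hyperedge $\{u,v,w\}$ with $u<v<w$ precisely when $\chi(uv)=\text{red}$, $\chi(uw)=\text{blue}$, and $\chi(vw)=\text{green}$. For any $U\subseteq[n]$ the expected number of hyperedges of $G_n$ inside $U$ is $\binom{|U|}{3}/27$, and Azuma's inequality applied to the edge-exposure martingale of $\chi$ shows that $G_n$ is $(1/27,\eta,1)$-dense with probability $1-o(1)$ for every fixed $\eta>0$. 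However, any copy of $H$ in $G_n$ would pull back the ambient ordering on $[n]$ and the colouring $\chi$ to produce a labelling and 3-colouring of $H$ of the prescribed form. Since no such labelling-colouring exists by assumption, $G_n$ must be $H$-free, and we conclude $\pi_u(H)\geq 1/27>0$.

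For (b) $\Rightarrow$ (a), fix a labelling $V(H)=\{v_1,\dots,v_f\}$ and colouring $\phi$ of the pairs covered by $H$ as in the statement, fix any $d>0$, and consider a sufficiently large $(d,\eta,1)$-dense 3-graph $G$ on $N$ vertices with $\eta$ small and $N$ large compared to $f$, $d$ and $\eta$. The approach is to apply a hypergraph regularity lemma to $G$, producing a partition $V(G)=V_1\cup\dots\cup V_t$ of equal classes together with a refinement of each pair $(V_i,V_j)$ into a bounded collection of $\varepsilon$-regular bipartite graphs; colour these bipartite graphs independently and uniformly red, blue, or green. The $(d,\eta,1)$-density hypothesis forces almost every class-triple $(V_i,V_j,V_k)$ with $i<j<k$ to support a regular triad on which $G$ has density bounded away from zero, and a positive proportion of these triads are \emph{rainbow}, with the red part on $(V_i,V_j)$, blue on $(V_i,V_k)$, and green on $(V_j,V_k)$. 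One would then embed $H$ greedily in the order $v_1,\dots,v_f$, assigning $v_i$ to a class so that for each later $v_j$ the pair $(v_i,v_j)$ is mapped to a bipartite graph whose colour equals $\phi(v_i,v_j)$, and invoke a counting lemma for regular triads to guarantee that the hyperedges of $H$ are realised inside $G$.

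The main obstacle is that $(d,\eta,1)$-density is substantially weaker than the quasirandomness hypotheses under which standard hypergraph counting lemmas operate; it controls only densities on linear vertex subsets, not densities relative to triples of bipartite graphs. The critical technical input is therefore a counting lemma tailored to this weak notion, asserting that enough rainbow triads inherit nearly the correct number of hyperedges from $G$ so that the greedy embedding driven by $\phi$ can be completed. Once such a lemma is available, the ordering of $V(H)$ together with $\phi$ supplies a deterministic embedding scheme and any $(d,\eta,1)$-dense 3-graph on enough vertices contains $H$, yielding $\pi_u(H)=0$.
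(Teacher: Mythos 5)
The theorem is quoted from Reiher, R\"odl and Schacht~\cite{MR3764068}; the paper itself does not prove it, so there is no ``paper's own proof'' to measure your attempt against. Judged on its own merits, your argument establishes only one of the two implications.

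Your treatment of (a) $\Rightarrow$ (b) is correct and is the intended construction: given that (b) fails, take a uniformly random $3$-colouring $\chi$ of $E(K_n)$ and put $\{u,v,w\}$ with $u<v<w$ in $G_n$ precisely when $(\chi(uv),\chi(uw),\chi(vw))=(\text{red},\text{blue},\text{green})$. Azuma on the edge-exposure martingale plus a union bound over all $U\subseteq[n]$ shows $G_n$ is $(1/27,\eta,1)$-dense with high probability, and any ordered copy of $H$ in $G_n$ would pull back to exactly a labelling and shadow-colouring of $H$ of the forbidden type, so the failure of (b) forces $G_n$ to be $H$-free. This is the same $1/27$ construction that underlies Corollary~\ref{uniformturanjump}.

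The (b) $\Rightarrow$ (a) direction has a genuine gap, one you flag yourself but do not close. You defer the entire technical content to a ``counting lemma tailored to this weak notion'' and then assume it is available; that lemma is precisely the hard part of the Reiher--R\"odl--Schacht proof and your sketch neither supplies it nor reduces it to something proved elsewhere. In addition, the proposed scheme of colouring the regularity-refined bipartite graphs uniformly at random and hunting for rainbow triads does not obviously engage the hypothesis: $(d,\eta,1)$-density controls the number of hyperedges inside \emph{linear-sized vertex subsets}, not densities relative to triples of pre-selected bipartite graphs, so there is no reason the randomly rainbow triads are the ones carrying any $G$-density, nor that a $\phi$-respecting assignment of classes and bipartite graphs can be made simultaneously for all pairs of $V(H)$. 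The actual argument passes to a reduced hypergraph whose vertices are the regular bipartite graphs, shows that uniform density of $G$ forces the reduced hypergraph itself to contain the required ordered/rainbow pattern on every linear-sized vertex subset, and only then runs an embedding and counting step. Without that reduction and the accompanying counting lemma, (b) $\Rightarrow$ (a) remains unproved.
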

We will use their result to observe that $\pi_u(H)=0$ for $H=F_{3,2}, F_5$ and $\mathbb{F}$ in Section~\ref{boundsfortable}. Reiher, R\"{o}dl and Schacht~\cite{MR3764068} also observed that the uniform Tur\'an density has a jump from $0$ to $1/27$. 

\begin{corl}[Reiher, R\"{o}dl and Schacht~\cite{MR3764068}]
\label{uniformturanjump}
If a $3$-graph $H$ satisfies $\pi_u(H)>0$, then $\pi_u(H)\geq \frac{1}{27}$.
\end{corl}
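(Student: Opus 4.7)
The claim is the contrapositive-plus-construction direction of Theorem~\ref{uniformturan}: if $\pi_u(H) > 0$, then by (a) $\Rightarrow$ (b) of Theorem~\ref{uniformturan}, $H$ admits no vertex labeling $V(H)=\{v_1,\dots,v_f\}$ together with a rainbow three-colouring $\phi\colon \delta H \to \{R,B,G\}$ of the form described there. The plan is to exhibit, for every $\eta>0$, every $\epsilon>0$, and every sufficiently large $n$, an $H$-free $(1/27-\epsilon,\eta,1)$-dense $3$-graph on $n$ vertices; letting $\epsilon\to 0$ then gives $\pi_u(H)\geq 1/27$ directly from the definition of $\pi_u$.

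The construction is probabilistic. Sample a uniformly random three-colouring $\phi\colon \binom{[n]}{2}\to\{R,B,G\}$ and let $G=G(\phi)$ be the $3$-graph on $[n]$ whose hyperedges are exactly the triples $\{i,j,k\}$ with $i<j<k$ and $\phi(ij)=R,\ \phi(ik)=B,\ \phi(jk)=G$. Each triple lies in $G$ independently with probability $1/27$. Standard Chernoff bounds, together with a union bound over vertex subsets $U\subseteq[n]$ of size at least $\eta^{1/3}n$ (smaller $U$ being absorbed by the $-\eta|V|^3$ slack in the definition of $(d,\eta,1)$-density), show that for any fixed $\eta,\epsilon>0$ and all sufficiently large $n$, $G$ is $(1/27-\epsilon,\eta,1)$-dense with probability tending to $1$.

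It remains to verify that $G$ is $H$-free. Suppose for contradiction that $H\subseteq G$ via some injection $\iota\colon V(H)\to[n]$. Ordering the vertices of $H$ according to $\iota$ gives a labeling $V(H)=\{v_1,\dots,v_f\}$, and pulling $\phi$ back through $\iota$ to $\delta H$ yields a three-colouring $\phi'$. By the very definition of $G$, every hyperedge $v_iv_jv_k\in E(H)$ with $i<j<k$ satisfies $\phi'(v_iv_j)=R$, $\phi'(v_iv_k)=B$, $\phi'(v_jv_k)=G$. Hence $H$ meets condition (b) of Theorem~\ref{uniformturan}, forcing $\pi_u(H)=0$, contrary to our hypothesis.

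The main technical point to handle carefully is the uniformity step: one needs the $(1/27-\epsilon,\eta,1)$-density bound to hold for \emph{every} subset $U\subseteq[n]$, not just a typical one. The routine way out is to split on $|U|$: for $|U|\geq \eta^{1/3}n$, Chernoff gives exponentially small failure probability per $U$, which beats the $2^n$ subsets by a union bound; for $|U|<\eta^{1/3}n$, the trivial bound $0\geq -\eta|V|^3$ already makes the density condition hold vacuously. This is the only non-trivial ingredient beyond invoking Theorem~\ref{uniformturan}.
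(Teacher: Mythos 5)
Your overall strategy is the correct one and matches the argument attributed to Reiher, R\"odl and Schacht: sample a random three-colouring of pairs, take the ``rainbow'' triples as hyperedges, observe that a copy of $H$ would pull back to a labeling/colouring witnessing condition (b) of Theorem~\ref{uniformturan}, and control the density on all subsets $U$ by concentration plus a union bound, with small $U$ absorbed by the $-\eta|V|^3$ slack.

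One claim, however, is wrong as stated and needs to be repaired. You assert that ``each triple lies in $G$ independently with probability $1/27$.'' The marginal probability is indeed $1/27$, but the events are \emph{not} independent: two triples that share a pair both constrain the colour of that shared pair, so for example $\Pr\bigl[\{1,2,3\}\in E \text{ and } \{1,2,4\}\in E\bigr]=3^{-5}\neq 3^{-6}$. A Chernoff bound for sums of independent indicators therefore does not apply directly to $|U^{(3)}\cap E|$. The correct tool is a bounded-differences inequality (Azuma--Hoeffding or McDiarmid) in the $\binom{|U|}{2}$ \emph{pair-colours}, which are the genuinely independent inputs: flipping one colour changes $|U^{(3)}\cap E|$ by at most $|U|$, so the sum of squared Lipschitz constants is $O(|U|^4)$ while the deviation you need is $\Theta(|U|^3)$, giving failure probability $\exp(-\Omega(|U|^2))=\exp(-\Omega(\eta^{2/3}n^2))$ per set, which beats the $2^n$ union bound. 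So the conclusion survives, but the justification must change from ``independence plus Chernoff'' to ``bounded differences in the edge-colours.''

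A small cosmetic point: the implication you are actually invoking is $(b)\Rightarrow(a)$ of Theorem~\ref{uniformturan} in contrapositive form (if $\pi_u(H)>0$ then no such labeling/colouring exists), not $(a)\Rightarrow(b)$; since the theorem is an equivalence this is harmless, but it is worth stating the direction correctly. The $H$-freeness argument itself --- order $V(H)$ by the images under $\iota$ and pull $\phi$ back to $\delta H$ --- is exactly right.
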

Somewhat surprisingly, it was not easy to prove that $1/27$ is best possible, which was done by Garbe, Kr\'al and Lamaison~\cite{kral}.
\subsection{Flag Algebras}

\label{flagresults}
We use flag algebras to obtain upper bounds on $\pi,\sigma,\pi_2$ and $\pi_u$ for Table~\ref{tab:exactcod2}. The general framework of flag algebras was developed by Razborov~\cite{RazbarovK43}.
Flag algebras has been applied to variety of problems, including problems on $3$-graphs~\cite{BaberTalbot,BaberTuran,RazbarovK43}. By now, it is a standard application to obtain some upper bounds on $\pi$. For $\sigma,\pi_2$ and $\pi_u$ we give a short explanation how upper bounds can be obtained. 
For readers familiar with flag algebras, the function counting scaled codegree squared sum can be expressed using flag algebras as follows:

\begin{align}\label{eq:fa}
\lim_{n \to \infty} \frac{\textup{co}_2(G_n)}{\binom{n}{2}(n-2)^2}
=
\left\llbracket
\left(
\vc{
\begin{tikzpicture}
\draw
(0,0) coordinate(1) node[vtxl,label=below:1](a){}
(1,0) coordinate(2) node[vtxl,label=below:2](b){}
(0.5,1) coordinate(3) node[vtx](c){}
;
\draw[hyperedge] (1) to[out=40,in=130] (2) to[out=120,in=280] (3) to[out=260,in=50] (1);
\draw
(1) node[vtxl]{}
(2) node[vtxl]{}
;
\end{tikzpicture}
}
\right)^2
\right\rrbracket_{1,2}
=
\frac{1}{6}
\vc{
\begin{tikzpicture}
\draw
(0,0) coordinate(1) node[vtx](a){}
(1,0) coordinate(2) node[vtx](b){}
(1,1) coordinate(3) node[vtx](c){}
(0,1) coordinate(4) node[vtx](d){}
;
\draw[hyperedge] (1) to[out=30,in=90] (2) to[out=100,in=260] (3) to[out=260,in=30] (1);
\draw[hyperedge] (1) to[out=90,in=150] (2) to[out=150,in=280] (4) to[out=280,in=90] (1);
\end{tikzpicture}
}
+
\frac{1}{2}
\Kfourthreeminus
+
\Kfourthree
,
\end{align}
 where $(G_n)_{n \geq 1}$ is a convergent sequence with $G_n$ being an $n$-vertex $3$-graph. 
 For a well-written explanation of the flag algebra method in the setting of $3$-uniform hypergraphs see \cite{RavryTuran} and for the particular application and an explanation of \eqref{eq:fa} see \cite{BalCleLidmain}. 
 
 For the minimum codegree threshold $\pi_2$, we use the formulation from \cite{FalgasK4-}. Suppose we want to show that $\pi_2(H) \leq z$ for some $3$-graph $H$ and $z \in (0,1)$.
 We do this by arguing that there is no convergent sequence $(G_n)_{n \geq 1}$ of $H$-free graphs with minimum codegree $z$. Suppose there is such a sequence. Then for any flag $F$ with two labeled vertices
 \begin{align}\label{eq:fa2}
\left\llbracket
\left(
\vc{
\begin{tikzpicture}
\draw
(0,0) coordinate(1) node[vtxl,label=below:1](a){}
(1,0) coordinate(2) node[vtxl,label=below:2](b){}
(0.5,1) coordinate(3) node[vtx](c){}
;
\draw[hyperedge] (1) to[out=40,in=130] (2) to[out=120,in=280] (3) to[out=260,in=50] (1);
\draw
(1) node[vtxl]{}
(2) node[vtxl]{}
;
\end{tikzpicture}
}
-z
\right)
\times F
\right\rrbracket_{1,2}
\geq 0.
\end{align}
By combining these inequalities and sum of squares inequalities, one gets a contradiction with the existence of $(G_n)_{n \geq 1}$. 

It is not obvious to us how to express in the flag algebras language the conditions for uniform Tur\'an density. Instead of a direct expression, Glebov, Kr\'al', and Volec~\cite{MR3474967} formulated consequences of the conditions that can be expressed using flag algebras. 
Suppose graphs in a convergent sequence $(G_n)_{n \geq 1}$ are $(d,\eta,1)$-dense.
The simplest instance is to take two labeled vertices and look at their co-neighborhood $N$. In $N$, the relative density of edges must be at least $d$ if $N$ is sufficiently large. If it is not sufficiently large, then the values in the equation below become all $0$ anyway.
In flag algebras, this can be done as 
 \begin{align}\label{eq:fau}
\left\llbracket
\left(
(1-d)
\vc{
\begin{tikzpicture}
\draw
(0,0) coordinate(1) node[vtxl,label=below:1](a){}
(1,0) coordinate(2) node[vtxl,label=below:2](b){}
(0,1) coordinate(3) node[vtx](c){}
(0.5,1) coordinate(4) node[vtx](c){}
(1,1) coordinate(5) node[vtx](c){}
;
\draw[hyperedge] (1) to[out=40,in=130] (2) to[out=120,in=280] (3) to[out=260,in=50] (1);
\draw[hyperedge] (1) to[out=40,in=130] (2) to[out=120,in=280] (4) to[out=260,in=50] (1);
\draw[hyperedge] (1) to[out=40,in=130] (2) to[out=120,in=280] (5) to[out=260,in=50] (1);
\draw[hyperedge] (3) to[out=40,in=130] (4) to[out=40,in=130] (5) to[out=90,in=90] (3);
\draw
(1) node[vtxl]{}
(2) node[vtxl]{}
(3) node[vtx](c){}
(4) node[vtx](c){}
(5) node[vtx](c){}
;
\end{tikzpicture}
}
-d
\vc{
\begin{tikzpicture}
\draw
(0,0) coordinate(1) node[vtxl,label=below:1](a){}
(1,0) coordinate(2) node[vtxl,label=below:2](b){}
(0,1) coordinate(3) node[vtx](c){}
(0.5,1) coordinate(4) node[vtx](c){}
(1,1) coordinate(5) node[vtx](c){}
;
\draw[hyperedge] (1) to[out=40,in=130] (2) to[out=120,in=280] (3) to[out=260,in=50] (1);
\draw[hyperedge] (1) to[out=40,in=130] (2) to[out=120,in=280] (4) to[out=260,in=50] (1);
\draw[hyperedge] (1) to[out=40,in=130] (2) to[out=120,in=280] (5) to[out=260,in=50] (1);
\draw
(1) node[vtxl]{}
(2) node[vtxl]{}
(3) node[vtx](c){}
(4) node[vtx](c){}
(5) node[vtx](c){}
;
\end{tikzpicture}
}
\right)
\times F
\right\rrbracket_{1,2}
\geq 0,
\end{align}
where edges with exactly one labeled vertex are not depicted for the sake of readability and $F$ is any  flag with two labeled vertices. An analogous equation can be obtained by considering non-co-neighborhood. More generally,
for more labeled vertices, we can decide for each pair if we want to take co-neighborhood or non-co-neighborhood. 
The approach by Glebov, Kr\'al', and Volec~\cite{MR3474967} continues by getting an exact solution on the threshold that has density of a particular hypergraph 0. Then to a hypothetical counterexample, one can apply sparsification to obtain an example on the threshold density, which in turn shows that the hypothetical counterexample actually has zero edge density, which is a contradiction. 
Since the full method from \cite{MR3474967} is more involved and uniform Tur\'an density is not the main focus of this paper, we opted for a simpler approach. We use \eqref{eq:fau} and its analogue for non-co-neighborhood and obtain an upper bound on the global edge density that is strictly less than $d$, leading to a contradiction. 
This simplified approach is not as strong. For $K_4^{3-}$ it gives only $0.259$ while a sharp result 0.25 is known~\cite{MR3474967}.

 The calculations which lead to the results in Table~\ref{tab:exactcod2} are computer assisted; we use CSDP~\cite{csdp} to calculate numerical solutions of semidefinite programs and then use SageMath~\cite{sagemath} for rounding these numerical solutions to exact ones.
The data files and programs to reproduce the calculations we developed are available at \url{http://lidicky.name/pub/co2b/}.\\
Next, we will present the constructions which give the lower bounds from Table~\ref{tab:exactcod2}.

\section{Bounds from Table~\ref{tab:exactcod2}}
\label{boundsfortable}

\subsection{\texorpdfstring{$K_4^3$}{TEXT}}
\label{K43}
Tur\'an's tetrahedron problem asks to determine the Tur\'an density of $K_4^3$. The best lower bound $\pi(K_4^3)\geq 5/9$ is obtained by $C_n$, see Figure~\ref{fig:CnBnH5}, the $3$-graph on $n$ vertices with vertex set $V(C_n)=V_1\cup V_2 \cup V_3$ where $||V_i|-|V_j||\leq 1$ for $i\neq j$ and edge set 
\begin{align*}
    E(C_n)=\{abc: a\in V_1,b\in V_2, c\in V_3 \} \cup \{abc: a,b\in V_1,c\in V_2 \} \\ \cup \ \{abc: a,b\in V_2,c\in V_3 \} \cup  \{abc: a,b\in V_3,c\in V_1 \}.
    \end{align*}
 Brown~\cite{K43brown}, Kostochka~\cite{K43Kostochka}, Fon-der-Flaass~\cite{K43Fonderflaass} and Frohmader~\cite{MR2465761} constructed families of $K_4^3$-free $3$-graphs with the same number of edges.  A series of papers~\cite{TetrahedronCaen,ChungLutetrahedron,RazbarovK43} have improved on the upper bound of $\pi(K_4^3)$ culminating in the current best-known bound by Baber~\cite{BaberTuran}
 \begin{align*}
     \pi(K_4^3)\leq 0.5615.
 \end{align*}
We~\cite{BalCleLidmain} solved Tur\'an's tetrahedron problem for the codegree squared density by showing $\sigma(K_4^3)=1/3$, where the lower bound is achieved by $C_n$. \\
For the minimum codegree threshold Czygrinow and Nagle~\cite{MR1829685} provided a construction that shows $\pi_2(K_4^3)\geq 1/2$. Let $T$ be a uniformly at random chosen tournament on $n$ vertices. Define a $3$-graph $G_T$ on $n$ vertices by setting  the triple $ijk$ with $i < j < k$ to be an edge of $G_T$ if the ordered pairs $(i,j)$ and $(i,k)$ receive opposite directions in $T$. This $3$-graph is $K_4^3$-free, and has minimum codegree $n/2-o(n)$ with high probability. Using flag algebras we can prove that $\pi_2(K_4^3)\leq 0.529$.

The previous construction was used by R\"{o}dl~\cite{MR837962} to show that for the uniform Tur\'an density $\pi_u(K_4^3)\geq 1/2$. Erd\H{o}s~\cite{MR1083590} suggested that this might be best possible, also see Reiher~\cite{MR4111729}. Using flag algebras as described in Section~\ref{flagresults}, we obtain $\pi_u(K_4^3) \leq 0.531$.

\subsection{\texorpdfstring{$K_5^3$}{TEXT}}
\label{K53}
For the Tur\'an density of $K_5^3$ it is known that
\begin{align*}
    \frac{3}{4}\leq \pi(K_5^3)\leq 0.769533,
\end{align*}
where the upper bound is obtained via flag algebras~\cite{flagmatic} and the lower bound is obtained by the balanced complete bipartite 3-graph $B_n$ (see Figure~\ref{fig:CnBnH5}) as observed by Tur\'an~\cite{MR177847}. The following $n$-vertex $3$-graph $H_5$ (presented in \cite{Sidorenko1981SystemsOS}, see Figure~\ref{fig:CnBnH5}) also achieves the lower bound. 
The vertex set of $H_5$ is divided into $4$ parts $A_1, A_2, A_3, A_4$ with $||A_j|-|A_i||\leq 1$ for all $1\leq i\leq j\leq 4$ and a triple $e$ is not an edge of $H_5$ iff there is some $j$ ($1\leq j \leq 4$) such that $|e \cap A_{j}|\geq 2$ and $|e\cap A_{j}|+|e\cap A_{j+1}|=3$, where $A_{5}=A_1$.

We~\cite{BalCleLidmain} proved that $\sigma(K_5^3)=5/8$, where the lower bound is obtained by the complete balanced bipartite $3$-graph $B_n$. Notice that $H_5$ does not achieve this lower bound. 

For the minimum codegree threshold, we have 
\begin{align*}
\frac{2}{3} \leq \pi_2(K_5^3)\leq 0.74,
\end{align*}
where we obtained the upper bound via flag algebras and the lower bound is due to Falgas-Ravry~\cite{codegreeFalgas} and Lo and Markstr\"om~\cite{LoMark} who constructed lower bounds on the codegree threshold for cliques of arbitrary size $s$. Here we present the construction due to Falgas-Ravry. Let $c: E(K_n)\rightarrow [s]$ be a uniformly at random chosen colouring of the edges of the complete graph on $n$ vertices with $s$ colours. Consider the $3$-graph $G$ based on this colouring in the following way: A triple $i,j,k$ with $i < j < k$ forms an edge in $G$ if and only if $c(ij)\neq c(ik)$. This $3$-graph is $K_{s+2}$-free and has minimum codegree $(1-1/s)n-o(n)$ with high probability. This construction can be seen as a generalization of the Czygrinow and Nagle  construction from the previous section.

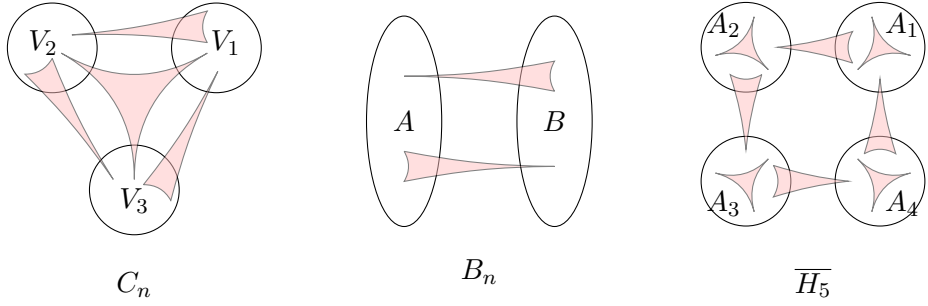
\begin{figure}
\begin{center}
\tikzset{
vtx/.style={inner sep=1.1pt, outer sep=0pt, circle, fill,draw}, 
hyperedge/.style={fill=pink,opacity=0.5,draw=black}, 
vtxBig/.style={inner sep=12pt, outer sep=0pt, circle, fill=white,draw}, 
hyperedge/.style={fill=pink,opacity=0.5,draw=black}, 
}
\vc{
\begin{tikzpicture}[scale=1.4]
\draw (30:0.9) coordinate(x1) node[vtxBig]{};
\draw (150:0.9) coordinate(x2) node[vtxBig]{};
\draw (270:0.9) coordinate(x3) node[vtxBig]{};
\draw
(30:0.8) coordinate(1) 
(150:0.8) coordinate(2) 
(270:0.8) coordinate(3) 
;
\draw[hyperedge] (1) to[out=210,in=330] (2) to[out=330,in=90] (3) to[out=90,in=210] (1);
\foreach \ashift in {30,150,270}{
\draw
(\ashift:0.8)++(\ashift+60:0.1) coordinate(1) 
++(\ashift+60:0.3) coordinate(2) 
(\ashift+120:0.8)++(\ashift+120+90:-0.2) coordinate(3)
;
\draw[hyperedge] (1) to[bend left] (2) to[bend left=5] (3) to[bend left=5] (1);
}
\draw (30:1)  node{$V_1$};
\draw (150:1) node{$V_2$};
\draw (270:1) node{$V_3$};
\draw (0,-1.8) node{$C_n$};
\end{tikzpicture}
}
\hskip 1cm
\vc{
\begin{tikzpicture}[scale=2.0]
\draw (0,0) coordinate(x1) ellipse (0.25cm and 0.7cm);
\draw (1,0) coordinate(x1) ellipse (0.25cm and 0.7cm);
\draw
(0,-0.2) coordinate(1) 
(0,-0.4) coordinate(2) 
(0,0.3) coordinate(3) 
(1,0.2) coordinate(4) 
(1,-0.3) coordinate(5) 
(1,0.4) coordinate(6) 
;
\draw[hyperedge] (1) to[bend left] (2) to[bend left=5] (5) to[bend left=5] (1);
\draw[hyperedge] (4) to[bend left] (6) to[bend left=5] (3) to[bend left=5] (4);
\draw (0,0) node{$A$};
\draw (1,-0) node{$B$};
\draw (0.5,-1) node{$B_n$};
\end{tikzpicture}
}
\hskip 1cm
\vc{
\begin{tikzpicture}[scale=1.4]
\draw (45:0.9) coordinate(x1) node[vtxBig]{};
\draw (135:0.9) coordinate(x2) node[vtxBig]{};
\draw (225:0.9) coordinate(x3) node[vtxBig]{};
\draw (315:0.9) coordinate(x4) node[vtxBig]{};
\foreach \ashift in {45,135,225,315}{
\draw
(\ashift:0.9)
+(\ashift+135-30:0.3) coordinate(2) 
+(\ashift+135+30:0.3) coordinate(1) 
(\ashift+90:0.9)++(\ashift-45:0.3) coordinate(3)
;
\draw[hyperedge] (1) to[bend left] (2) to[bend left=5] (3) to[bend left=5] (1);
\draw
(\ashift:0.9)
+(\ashift+180:0.3) coordinate(1) 
+(\ashift+180+120:0.3) coordinate(2) 
+(\ashift+180+240:0.3) coordinate(3) 
;
\draw[hyperedge] (1) to[bend left] (2) to[bend left] (3) to[bend left] (1);
}
\draw (45:1.2)  node{$A_1$};
\draw (135:1.2) node{$A_2$};
\draw (225:1.2) node{$A_3$};
\draw (315:1.2) node{$A_4$};
\draw (0,-1.6) node{$\overline{H_5}$};
\end{tikzpicture}
}
\end{center}
\caption{Illustration of $C_n$, $B_n$ and the complement of $H_5$.}\label{fig:CnBnH5}
\end{figure}

For the uniform Tur\'an density, we have
\begin{align*}
    \frac{2}{3}\leq \pi_u(K_5^3) \leq 0.758,
\end{align*}
where the lower bound is obtained from the previously described $3$-graph $G$ and the upper bound from flag algebras.

\subsection{\texorpdfstring{$K_6^3$}{TEXT}}
\label{K63}
For the Tur\'an density of $K_6^3$ we know 
\begin{align*}
    0.84 \leq \pi(K_6^3)\leq 0.8583903,
\end{align*}
where the upper bound was obtained via flag algebras~\cite{flagmatic} and the lower bound (see \cite{Sidorenko1981SystemsOS}) is obtained by the following $3$-graph $H_{6}$ (see Figure~\ref{fig:G6H6}). Divide the vertex set $[n]$ of $H_6$ into $5$ parts $A_1,\ldots,A_{5}$ with $||A_j|-|A_i||\leq 1$ for all $1\leq i\leq j\leq 5$ and let a triple $e$ not form an edge of $H_{6}$ iff there is some $j$ ($1\leq j \leq 5$) such that
\begin{align*}
     |e \cap A_{j}|\geq 2 \quad \quad \text{and} \quad \quad |e\cap A_{j}|+|e\cap A_{j+1}|=3, 
\end{align*}
where $A_{6}=A_1$.  
For the codegree squared density we have
\begin{align*}
    0.7348 \leq\sigma(K_6^3)\leq 0.7535963,
\end{align*}
where we obtained the upper bound by flag algebras and the lower bound is achieved by the following construction from \cite{BalCleLidmain}. Let $G_6$ (see Figure~\ref{fig:G6H6}) be a $3$-graph with a vertex partition $A\cup B_1\cup B_2\cup B_3$ such that $|B_1|=|B_2|=|B_3|=bn$ and $|A|=an$, where $a$ and $b$ are optimized later. The $3$-graph $G_6$ forms a $C_{3bn}$ (see Subsection~\ref{K43}) on $B_1 \cup B_2 \cup B_3$ and further contains all edges intersecting $A$ with exactly $1$ or $2$ vertices. It has codegree squared sum 
\begin{align*}
    \textup{co}_2(G_6)&= \left(\frac{a^2}{2}(1-a)^2+3ab +3 \frac{b^2}{2}(a+b)^2+3b^2 (a+2b)^2\right) n^4(1+o(1))\\
    &= \left(\frac{11}{18}a^4-a^3-\frac{2}{3}a^2+\frac{8}{9}a+\frac{1}{6} \right)n^4(1+o(1)).
\end{align*}
The optimum is one of the roots of $22a^3-27a^2-12a+8=0$. The root is approximately  $a=0.412498$, which gives $\textup{co}_2(G_6)= 0.3674 n^4 (1+o(1))$. Note that this $3$-graph with $a=2/5$ is an instance of a family of constructions due to Keevash and Mubayi~\cite{Keevashsurvey} that achieves the best-known lower bound for the Tur\'an density of $K_6^3$.

For the minimum codegree threshold of $K_6^3$ we have
\begin{align*}
    \frac{3}{4} \leq \pi_2(K_6^3)\leq 0.838,
\end{align*}
where the upper bound is obtained via flag algebras and the lower bound is due to Falgas-Ravry~\cite{codegreeFalgas} and Lo and Markstr\"om~\cite{LoMark}, see the previous section for the construction by Falgas-Ravry.

For the uniform Tur\'an density, we have
\begin{align*}
    \frac{3}{4}\leq \pi_u(K_6^3) \leq 0.853,
\end{align*}
where the lower bound is obtained by the Falgas-Ravry's construction stated in the previous section and the upper bound from flag algebras.

\subsection{$K_t^k$}
Denote by $K_t^k$ the complete $k$-graph on $t$ vertices. Large cliques are studied intensively for all notions of extremality. For an overview of results in $\ell_1$-norm see \cite{MR1341481}, in $\ell_2$-norm see \cite{BalCleLidmain}, and for the minimum codegree threshold see \cite{AllanZhao} and \cite{Sidorenkocode}. 

The best-known bounds for the Tur\'an density are
\begin{align*}1-\left(\frac{k-1}{t-1}\right)^{k-1} \leq \pi(K_t^k)\leq 1-\binom{t-1}{k-1}^{-1},
\end{align*}
where the lower bound is due to Sidorenko~\cite{Sidorenko1981SystemsOS} and the upper bound is due to de Caen~\cite{MR734038}. 

For $k=3$, Keevash and Mubayi~\cite{Keevashsurvey} constructed a family of $3$-graphs obtaining this lower bound. Their construction for $K_t^3$-free graphs goes as follows. Take a directed graph $G$ on $t-1$ vertices with both in-degree and out-degree equal to one, i.e. $G$ is a vertex disjoint union of directed cycles. Note that cycles of length $2$ in $G$ are allowed but loops are not. Now we construct a $K_t^3$-free hypergraph $H$ on $(t-1)n$ vertices. 
Partition $V(H)$ into $A_1,\ldots,A_{t-1}$ of equal sizes corresponding to vertices $v_1,\ldots,v_{t-1}$ of $G$.
A triple $xyz$ is not an edge in $H$ if and only if there exists $A_i$ such that $x,y,z \in A_i$, or there exists $v_iv_j \in E(G)$ with $|\{x,y,z\} \cap A_i|=2$ and  $|\{x,y,z\} \cap A_j|=1$.
Notice that $H_5$ from Figure~\ref{fig:CnBnH5} is a case of this construction with $G$ being a directed cycle on four vertices with edges $v_1v_2, v_2v_3, v_3v_4, v_4v_1$.
Other examples are $C_n, B_n$ and $H_6$.

The best-known extremal constructions in $\ell_2$-norm for small $t$ are possibly unbalanced versions of this construction. 
\begin{itemize}
    \item For $t=4$, $C_n$ is the resulting 3-graph when $G$ is a directed triangle. 
    \item For $t=5$, $B_n$ is the resulting 3-graph when $G$ is the union of two cycles of length 2.
    \item For $t=6$, $G_6$ is the resulting 3-graph when $G$ is formed by the union of a directed triangle with a directed cycle of length 2, up to balancing of the class sizes.
\end{itemize}
This suggests~\cite{BalCleLidmain} that when $G$ is maximizing the number of directed cycles, the resulting $3$-graph $H$ could be extremal in $\ell_2$-norm.

For the minimum codegree threshold of cliques of large size, the best-known bounds, obtained by Lo and Zhao~\cite{AllanZhao}, are 
\begin{align*}
    1- c_2 \frac{\log t}{t^{k-1}}\leq \pi_2(K_t^k) \leq 1- c_1 \frac{\log t}{t^{k-1}},
\end{align*}
where $c_1,c_2>0$ are constants depending on $k$.

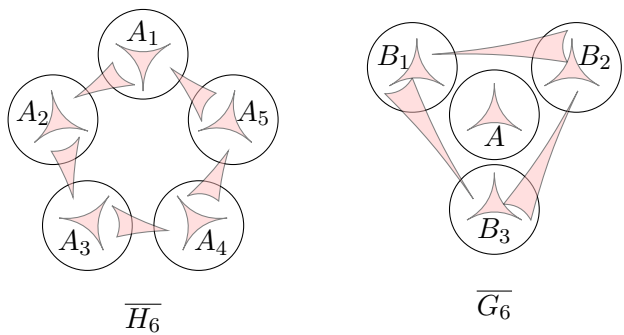
\begin{figure}
\begin{center}
\tikzset{
vtx/.style={inner sep=1.1pt, outer sep=0pt, circle, fill,draw}, 
hyperedge/.style={fill=pink,opacity=0.5,draw=black}, 
vtxBig/.style={inner sep=12pt, outer sep=0pt, circle, fill=white,draw}, 
hyperedge/.style={fill=pink,opacity=0.5,draw=black}, 
}
\vc{
\begin{tikzpicture}[scale=1.4]
\foreach \x in {1,2,3,4,5}
{
\draw (90-72+72*\x:0.9) coordinate(x\x) node[vtxBig]{};
\draw (90-72+72*\x:1.1) node{$A_{\x}$};
}
\foreach \x in {1,2,3,4,5}{
\pgfmathtruncatemacro{\ashift}{(90-72+72*\x)}
\draw
(\ashift:0.9)
+(\ashift+135-30:0.3) coordinate(2) 
+(\ashift+135+30:0.3) coordinate(1) 
(\ashift+72:0.9)++(\ashift-45:0.3) coordinate(3)
;
\draw[hyperedge] (1) to[bend left] (2) to[bend left=5] (3) to[bend left=5] (1);
\draw
(\ashift:0.85)
+(\ashift+180:0.3) coordinate(1) 
+(\ashift+180+120:0.3) coordinate(2) 
+(\ashift+180+240:0.3) coordinate(3) 
;
\draw[hyperedge] (1) to[bend left] (2) to[bend left] (3) to[bend left] (1);
}
\draw (0,-1.6) node{$\overline{H_6}$};
\end{tikzpicture}
}
\hskip 2em
\vc{
\begin{tikzpicture}[scale=1.4]
\draw (30:0.9) coordinate(x1) node[vtxBig]{};
\draw (150:0.9) coordinate(x2) node[vtxBig]{};
\draw (270:0.9) coordinate(x3) node[vtxBig]{};
\draw (0,0) coordinate(a) node[vtxBig]{};
\draw
(30:0.8) coordinate(1) 
(150:0.8) coordinate(2) 
(270:0.8) coordinate(3) 
;
\foreach \ashift in {30,150,270}{
\draw
(\ashift:0.8)++(\ashift+60:0.1) coordinate(1) 
++(\ashift+60:0.3) coordinate(2) 
(\ashift+120:0.8)++(\ashift+120+90:-0.2) coordinate(3)
;
\draw[hyperedge] (1) to[bend left] (2) to[bend left=5] (3) to[bend left=5] (1);
}
\foreach \x in {1,2,3}{
\pgfmathtruncatemacro{\ashift}{(30+120*\x)}
\draw
(\ashift:0.85)
+(\ashift+180:0.3) coordinate(1) 
+(\ashift+180+120:0.3) coordinate(2) 
+(\ashift+180+240:0.3) coordinate(3) 
;
\draw[hyperedge] (1) to[bend left] (2) to[bend left] (3) to[bend left] (1);
}
\pgfmathtruncatemacro{\ashift}{(30+120*0)}
\draw
+(\ashift+180:0.3) coordinate(1) 
+(\ashift+180+120:0.3) coordinate(2) 
+(\ashift+180+240:0.3) coordinate(3) 
;
\draw[hyperedge] (1) to[bend left] (2) to[bend left] (3) to[bend left] (1);

\draw (30:1.1)  node{$B_2$};
\draw (150:1.1) node{$B_1$};
\draw (270:1.1) node{$B_3$};
\draw (0,-0.2) node{$A$};
\draw (0,-1.8) node{$\overline{G_6}$};
\end{tikzpicture}
}
\end{center}
\caption{Illustration of the complement of $H_6$ and $G_6$.}\label{fig:G6H6}
\end{figure}

\tikzset{redge/.style={color=red, line width=1.2pt}} 
\tikzset{bedge/.style={color=blue, line width=1.2pt}} 
\tikzset{gedge/.style={color=green!80!black, line width=1.2pt}} 

\begin{figure}
\begin{center}
\begin{tikzpicture}
\draw
\foreach \x in {1,2,...,5}{
(72*\x+90-72:1) node[vtx](\x){}
(72*\x+90-72:1.3) node{\x}
}
;
\draw[redge](1)--(2) (3)--(4);
\draw[bedge](1)--(3) (1)--(4) (1)--(5) (3)--(5);
\draw[gedge](2)--(3) (2)--(4) (2)--(5) (4)--(5);
\draw(0,-2) node {$F_{3,2}$};
\begin{scope}[xshift=5cm]
\draw
\foreach \x in {1,2,...,5}{
(72*\x+90-72:1) node[vtx](\x){}
(72*\x+90-72:1.3) node{\x}
}
;
\draw[redge](1)--(2) (3)--(4);
\draw[bedge](1)--(3) (1)--(4) (3)--(5);
\draw[gedge](2)--(3) (2)--(4) (4)--(5);
\draw(0,-2) node {$F_{5}$};
\end{scope}

\begin{scope}[xshift=10cm]
\draw
\foreach \x in {1,2,...,5}{
(72*\x+90-72:1) node[vtx](\x){}
(72*\x+90-72:1.3) node{\x}
}
(72*1+90-72:1.7) node{$v_3$}
(72*2+90-72:1.7) node{$v_2$}
(72*3+90-72:1.7) node{$v_1$}
(72*4+90-72:1.7) node{$v_5$}
(72*5+90-72:1.7) node{$v_4$}
;
\draw[redge](1)--(2) (1)--(4) (3)--(4);
\draw[bedge](1)--(3) (1)--(5) (3)--(5);
\draw[gedge](2)--(3) (2)--(5) (4)--(5);
\draw(0,-2) node {$C_{5}^-$};
\end{scope}

\end{tikzpicture}    
\end{center}
    \caption{Colorings for Theorem~\ref{sec:uniformturan}(b).}
    \label{fig:colorings}
\end{figure}
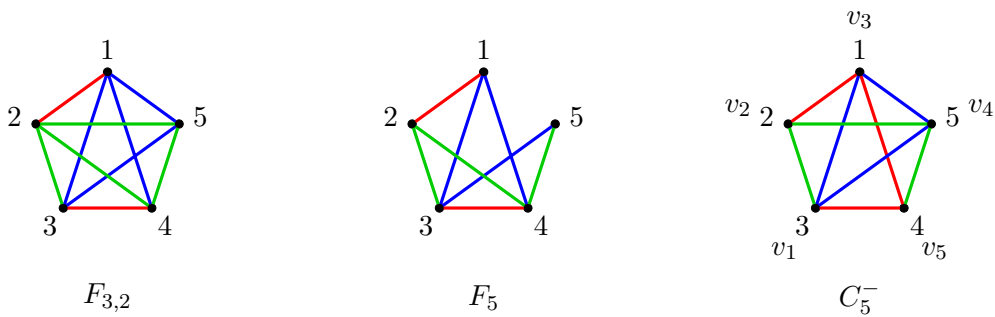

\subsection{\texorpdfstring{$F_{3,2}$}{TEXT}}
\label{F32}
F\"{u}redi, Pikhurko and Simonovits~\cite{F32furedi} proved that the Tur\'an density of $F_{3,2}$ is $4/9$ which is achieved by the $3$-graph with vertex set $V = V_1 \cup V_2$, where all edges have two vertices in $V_1$ and one vertex in $V_2$ and $|V_1| = \frac{2}{3}n$.\\
The $F_{3,2}$-free graph $G$ which we conjecture to maximize the codegree square sum has similar structure, but different class sizes: $|V_1|= \lceil \sqrt{1/2}n \rceil$. Clearly, changing the class sizes will not create an $F_{3,2}$, and $\textup{co}_2(G)=1/8 n^4 (1+o(1))$. Thus, $\sigma(F_{3,2})\geq 1/4$. Flag algebras give $\sigma(F_{3,2})\leq 1/4+10^{-9}$.
Here we note that flag algebras give a numerical result of $1/4$ but obtaining an exact solution is tricky since it contains $\sqrt{1/2}$ in addition to rational numbers. Likely, using the stability method one could prove an exact result, however we have not attempted to do it. \\
%
%
The minimum codegree threshold of $F_{3,2}$ was determined exactly by Falgas-Ravry, Marchant,  Pikhurko and Vaughan~\cite{codF32falgas} who proved that for large enough $n$
\begin{align*}
    \textup{ex}_2(n,F_{3,2})=\begin{cases} \left\lfloor \frac{n}{3}\right\rfloor -1 & \text{ if $n$ is congruent to $1$ modulo $3$,} \\ 
    \left\lfloor \frac{n}{3}\right\rfloor & \text{ otherwise.}
    \end{cases}
\end{align*}
The lower bound is obtained by the hypergraph with vertex partition $A_1\cup A_2 \cup A_3$, where $||A_i|-|A_j||\leq 1$ for $i,j\in [3]$, and edges $xyz$ for $x,y\in A_i,z\in A_{i+1}$ for $i=1,2,3$, where $A_4:=A_1$.  

The uniform Tur\'an density of $F_{3,2}$ is $\pi_u(F_{3,2})=0$ by Theorem~\ref{uniformturan}, because the edges of the shadow graph can be colored in the following way. Color the edges $12,34$ red, the edges $13,14,15,35$ blue, and the edges $23,24,25,45$ green; see Figure~\ref{fig:colorings}, and recall that the edges of $F_{3,2}$ are $123,145,245,345$.  


\subsection{\texorpdfstring{$F_{3,3}$}{TEXT}}\label{sec:F33}
Keevash and Mubayi~\cite{MR2912791} and independently Goldwasser and Hansen~\cite{MR3048207} proved that the ba\-lan\-ced, complete, bipartite $3$-graph is extremal in the $\ell_1$-norm for $n\geq 6$. Thus, $\pi(F_{3,3})=3/4$. In \cite{BalCleLidmain} we proved that it is also extremal in the $\ell_2$-norm for $n$ large enough, i.e. $\sigma(F_{3,3})=5/8$. For the minimum codegree threshold we have $1/2 \leq \pi_2(F_{3,3})\leq 0.604$, where the lower bound comes from the balanced, complete, bipartite $3$-graph $B_n$ and the upper bound is obtained via flag algebras. For the uniform Tur\'an density, we have
\begin{align}
\label{inequalityk4minus}
    \frac{1}{4}= \pi_u(K_4^{3-}) \leq \pi_u(F_{3,3}) \leq \frac{1}{4},
\end{align}
since $K_4^{3-} \subset F_{3,3}$. For $\pi_u(K_4^{3-})$ see Section~\ref{K43minus}.
The upper bound in \eqref{inequalityk4minus} was observed by Schulke~\cite{Schulke23}. 
Observe that $F_{3,3}$ can be obtained from $K_4^{3-}$ as a blow-up of one vertex and inserting an edge inside the blow-up.
At $1/4$ threshold, there exists a blow-up of $K_4^{3-}$ and by uniformity, there is an edge inside one of the parts giving a copy of $F_{3,3}$.

\subsection{\texorpdfstring{$F_5$}{TEXT}}
\label{F5}

Frankl and F\"uredi~\cite{F5Frankl} proved that for $n\geq 3000$ the $F_5$-free $3$-graph with the largest number of edges is $S_n$, thus, $\pi(F_5)=2/9$. The codegree squared density of $F_5$ is $\sigma(F_5)=2/27$ which follows from Theorem~\ref{F5exact}, see Section~\ref{F5section} for more details.

The minimum codegree threshold of $F_5$ is $\pi_2(F_5)=0$. 
To be more precise, we have $\textup{ex}_2(n,F_5)\leq 2$, because if there exists an $F_5$-free $3$-graph $G$ with $d(x,y)\geq 3$ for all pairs $xy$, then take any pair $ab$ and two vertices $c,d \in N(a,b)$. Now there exists $e\neq a,b,c,d$ with $e\in N(c,d)$. Thus $a,b,c,d,e$ spans an $F_5$, a contradiction. 

The uniform Tur\'an density of $F_5$ is $\pi_u(F_5)=0$ by Theorem~\ref{uniformturan}, because the edges of the shadow graph can be colored in the following way. Color the edges $12,34$ red, the edges $13,14,35$ blue, and the edges $23,24,45$ green; see Figure~\ref{fig:colorings}. 

\subsection{Fano plane \texorpdfstring{$\FF$}{TEXT}}
\label{Fano}
Denote by $\FF$ the Fano plane, i.e., the unique $3$-uniform hypergraph with seven edges on seven vertices in which every pair of vertices is contained in a unique edge. S\'os~\cite{VeraSos} proposed to study the Tur\'an number for the Fano plane and conjectured the complete balanced bipartite $3$-graph $B_n$ to be extremal. This problem was solved asymptotically by Caen and F\"{u}redi \cite{FanoFuredi}. Later, F\"{u}redi and Simonovits \cite{FanoFuredi2} and, independently, Keevash and Sudakov \cite{Fanoplane} determined the extremal hypergraph for large $n$. Recently, Bellmann and Reiher \cite{FanoReiher} solved the question for all $n$. \\
We conjecture that the extremal example in the codegree squared sense also is the complete bipartite graph.
\begin{conj}
There exists $n_0$, such that for all $n\geq n_0$ 
\begin{align*}
    \textup{exco}_2(n,\FF)= \textup{co}_2(B_n). 
\end{align*}
Furthermore, $B_n$ is the unique $\FF$-free $3$-graph $H$ on $n$ vertices satisfying $\textup{co}_2(H)=\textup{exco}_2(n,\FF)$.
\end{conj}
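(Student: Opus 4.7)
The plan is to follow a stability-plus-cleaning strategy in the spirit of the classical exact Turán proofs for the Fano plane by Keevash--Sudakov and Bellmann--Reiher. The argument naturally splits into three stages: an asymptotic bound $\sigma(\FF) = 5/8$, a stability statement, and a vertex-by-vertex cleaning argument to pin down exact extremality and uniqueness.

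The first and most difficult stage is to establish $\sigma(\FF) = 5/8$. The lower bound comes from $B_n$, which is $\FF$-free since the Fano plane fails to have property B (is not 2-colorable) and thus cannot be embedded into any bipartite 3-graph; a direct computation gives $\textup{co}_2(B_n) = (5/8 + o(1))\binom{n}{2}(n-2)^2$. For the matching upper bound, the flag algebra bound of $3/4$ in Table~\ref{tab:exactcod2} is too weak, so a combinatorial argument is required. The natural starting point is to adapt the de Caen--F\"uredi proof of $\pi(\FF) = 3/4$, which exploits that the link of a vertex in an $\FF$-free 3-graph satisfies strong structural constraints. Concretely, one should express $\textup{co}_2(G)$ as a sum over vertices of weighted link-graph parameters and bound these via Fano-freeness, together with a Cauchy--Schwarz step balancing link densities against codegrees.

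The second stage is a stability claim: every $\FF$-free $G$ with $\textup{co}_2(G) \geq (5/8-\varepsilon)\binom{n}{2}(n-2)^2$ is $\delta n^3$-close in edit distance to $B_n$, where $\delta \to 0$ as $\varepsilon \to 0$. This should follow from a robust version of the asymptotic proof, possibly supplemented by Keevash--Sudakov's Tur\'an stability theorem for $\FF$. The third stage is a vertex-by-vertex cleaning argument: assuming $G$ is near-bipartite, fix a bipartition $V = A \cup B$ minimizing the number of defect edges (triples inside one part) plus missing cross-edges. Show that any remaining defect can be locally removed by deleting a defect edge or adding a missing bipartite edge in a way that strictly increases $\textup{co}_2(G)$ while preserving $\FF$-freeness, forcing $G = B_n$. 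The delicate point is that cross-pairs in $B_n$ have codegree $n-2$ whereas same-part pairs have codegree $\approx n/2$, so the swap bookkeeping must account for the asymmetric contribution of these two pair types.

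The main obstacle is the asymptotic stage: bridging the gap between the flag algebra bound $3/4$ and the conjectured value $5/8$ requires fundamentally new combinatorial input. A further subtle point is that a high codegree squared sum does not automatically force a high edge count, since the trivial bound $\textup{co}_2(G) \leq 3(n-2)|E(G)|$ yields only $|E(G)| \geq (5/48 - o(1))n^3$ when $\textup{co}_2(G)$ is near maximum, which is far below $\textup{ex}(n,\FF) \approx n^3/8$. Hence one cannot simply import existing Tur\'an stability for the Fano plane through a counting reduction, and a direct codegree-squared stability argument is needed. The leverage for both obstacles should come from the property-B failure of the Fano plane, which is the structural reason why $B_n$ is the natural $\ell_2$-maximizer among $\FF$-free 3-graphs.
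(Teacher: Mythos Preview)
The statement you are attempting to prove is stated in the paper as a \emph{conjecture}, not a theorem; the paper gives no proof. Indeed, immediately after the conjecture the authors record only the trivial bounds $5/8 \leq \sigma(\FF) \leq 3/4$, obtained from $B_n$ below and from $\sigma(\FF) \leq \pi(\FF) = 3/4$ above, and remark that they did not run flag algebras on $\FF$ because the number of $\FF$-free $3$-graphs on seven vertices is too large for their computers. So there is no ``paper's own proof'' to compare against: the problem is open.

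Your proposal is a reasonable high-level roadmap (asymptotic value, then stability, then cleaning), and you are right that the entire difficulty lies in Stage~1, closing the gap between $3/4$ and $5/8$. But the proposal does not actually supply that step. The sentence ``adapt the de Caen--F\"uredi proof \ldots\ express $\textup{co}_2(G)$ as a sum over vertices of weighted link-graph parameters \ldots\ together with a Cauchy--Schwarz step'' is a description of the \emph{shape} a proof might take, not an argument. The de Caen--F\"uredi proof ultimately reduces to the fact that a $3$-colored complete graph avoiding a rainbow triangle has a color class of size at least $n^2/4$; it is not clear what the $\ell_2$-analogue of that reduction would be, and Cauchy--Schwarz applied to codegrees typically \emph{loses} information relative to the $\ell_1$ bound rather than gaining it. You also correctly flag the second obstruction yourself: $\textup{co}_2(G)$ near $(5/8)\binom{n}{2}(n-2)^2$ only forces $|E(G)| \gtrsim (5/48)n^3$, so one cannot bootstrap from Keevash--Sudakov stability. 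In short, your outline is an accurate diagnosis of why the conjecture is hard, but it is not a proof, and the paper does not claim one either.
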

We have $5/8\leq \sigma(\FF) \leq 3/4$ as trivial bounds using the complete bipartite $3$-graph as a lower bound and the fact that $\sigma(\FF)\leq \pi(\FF)\leq 3/4$.

The minimum codegree threshold was asymptotically determined to be $\pi_2(\FF)=1/2$ by Mubayi~\cite{MubayiFano}. He conjectured that $\textup{ex}_2(n,F_{3,2})=\lfloor n/2 \rfloor$ for large enough $n$. This conjecture was solved by Keevash~\cite{MR2478542} using an involved quasi-randomness argument. Later, DeBiasio and Jiang~\cite{MR3131883} gave a simplified proof.   

We did not use flag algebras for $\FF$ because our computers cannot handle the number of $\FF$-free $3$-graphs on $7$ vertices. 

The uniform Tur\'an density of $\FF$ is $\pi_u(\FF)=0$ by Theorem~\ref{uniformturan}, because the edges of the shadow graph can be colored in the following way. Color the edges $12,34,15,24,14,25,36$ red, the edges $13,35,16,26,17,27,37$ blue, and the edges $23,45,56,46,47,57,67$ green, where the edges of the Fano plane are are $123,345,156,246,147,257,367$. 

\subsection{\texorpdfstring{$K_4^{3-}$}{TEXT}}
\label{K43minus}
Denote by $S_6$ the $3$-graph on six vertices with edge set
\begin{align*}
    E(S_6)=\{123,234,345,451,512,136,246,356,256,146\}.
\end{align*} 
Note that in $S_6$ the link graph of every vertex is a cycle of length $5$.
Let $n=6^k$ be a power of $6$ and let $G$ be the iterated blow-up of $S_6$. This $3$-graph $G$ was constructed by Frankl and F\"{u}redi~\cite{K43-extremalfrankl} to give a lower bound on the Tur\'an density of $K_4^{3-}$. Since $G$ is $K_4^{3-}$-free, $\pi(K_4^{3-})\geq 2/7$. The best know upper bound $\pi(K_4^{3-})\leq 0.28689$ is by Vaughan~\cite{flagmatic} using flag algebras.
The $3$-graph $G$ has codegree squared sum
\begin{align*}\textup{co}_2(G)&=\frac{1}{2}n^4 \left(\sum_{i\geq1} \frac{5}{6^i}\left( \frac{2}{6^i}\right)^2 + o(1) \right)=10n^4 \left(\sum_{i\geq1} \frac{1}{216^i} + o(1) \right)\\
&=10n^4 \left(\frac{1}{1-\frac{1}{216}}-1\right) (1+o(1))
=\frac{2}{43}n^4(1+o(1)).
\end{align*}
Thus, $\sigma(K_4^{3-})\geq 4/43$. We conjecture that $G$ is the extremal example in $\ell_2$-norm.
\begin{conj}
\begin{align*}\sigma(K_4^{3-})= \frac{4}{43} \approx 0.0930232558.
\end{align*}
\end{conj}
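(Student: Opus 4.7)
The plan is to establish the matching upper bound $\sigma(K_4^{3-}) \leq 4/43$ by combining a stability theorem for $K_4^{3-}$-free $3$-graphs with an induction that mirrors the self-similar structure of the conjectured extremal example. Recall that $S_6$ is a $2$-regular pair design: every one of its $15$ pairs of vertices has codegree exactly $2$, so blowing up each vertex by a set of size $t$ produces a hypergraph in which pairs across distinct blow-up classes have codegree $2t$. Consequently the $k$-fold iterated blow-up $T_k$ on $n = 6^k$ vertices obeys the clean recursion
\[
\textup{co}_2(T_k) = 60 \, (n/6)^4 + 6 \, \textup{co}_2(T_{k-1}),
\]
whose unique fixed-point scaling is $\textup{co}_2(T_k) = (2/43) n^4 (1 + o(1))$, matching the conjectured value $\sigma(K_4^{3-}) = 4/43$. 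Any upper bound proof must show that this recursion is globally optimal.

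The steps I would follow are: (i) prove an asymptotic stability theorem asserting that every $K_4^{3-}$-free $3$-graph $G$ with $\textup{co}_2(G) \geq (2/43 - \varepsilon) n^4$ admits a partition $V(G) = V_1 \cup \dots \cup V_6$ with $|V_i| = n/6 + o(n)$ under which $G$ agrees with a blow-up of $S_6$ apart from $o(n^3)$ edges; (ii) iterate: the cross-class contribution is at most $60 \,(n/6)^4 (1 + o(1))$, which forces the within-class contribution $\sum_i \textup{co}_2(G[V_i])$ to be near the extremal value $6 \cdot (2/43)(n/6)^4$, so each $G[V_i]$ is itself near-extremal and admits its own $S_6$ partition; (iii) sum the contributions layer by layer, reproducing the recursion and absorbing additive errors. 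Since $K_4^{3-}$-freeness is equivalent to every vertex link being triangle-free, step (i) should in turn follow, via stability of Mantel's theorem applied to each $L(v)$, by showing that the link structures at different vertices are globally compatible in a way that forces a $6$-partition matching $S_6$.

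The principal obstacle is step (i). A stability theorem of this strength for $K_4^{3-}$ is unavailable even in the $\ell_1$ setting: the companion conjecture $\pi(K_4^{3-}) = 2/7$ has been open since Frankl--F\"uredi~\cite{K43-extremalfrankl}, and the current flag algebra upper bound $\sigma(K_4^{3-}) \leq 0.09307$ is only just above $4/43 \approx 0.09302$. This narrow gap is characteristic of problems where the SDP certificate is numerically convincing but the requisite rounding requires genuinely new structural input. A possible bypass avoiding full stability would be a vertex-local inequality bounding $\sum_{u} d(u,v)^2$ in terms of Mantel-type parameters of the triangle-free link $L(v)$ that is tight on iterated blow-ups of $C_5$, the link of every vertex of $S_6$; but this transfers the difficulty to a corresponding fixed-point identity for $C_5$-blow-ups, and I expect any proof must ultimately confront the self-referential nature of the conjectured extremal configuration.
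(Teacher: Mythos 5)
The statement you are attacking is labelled a conjecture in the paper, not a theorem, and the paper offers no proof of it: it proves only the lower bound $\sigma(K_4^{3-}) \geq 4/43$ via the iterated blow-up $G$ of $S_6$ (computing $\textup{co}_2(G) = \frac{2}{43}n^4(1+o(1))$ by summing a geometric series) and then records that flag algebras give only $\sigma(K_4^{3-}) \leq 0.09306286$, strictly above $4/43 \approx 0.09302$. So there is nothing in the paper to compare your argument against. Your reconstruction of the lower bound is correct and equivalent to the paper's: since $S_6$ is a $2$-design on $15$ pairs, the recursion $\textup{co}_2(T_k) = 60(n/6)^4 + 6\,\textup{co}_2(T_{k-1})$ is right, and its fixed point gives $\textup{co}_2(T_k) = \frac{2}{43}n^4(1+o(1))$, hence $\sigma \geq 4/43$ after the normalization by $\binom{n}{2}(n-2)^2 \sim n^4/2$.

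You are also candid that steps (i)--(iii) are not a proof, and you correctly locate the genuine gap: no stability theorem of the required strength is known for $K_4^{3-}$-free $3$-graphs, not even in the $\ell_1$-norm setting where the companion conjecture $\pi(K_4^{3-}) = 2/7$ of Frankl and F\"uredi remains open, and the current flag-algebra certificate does not come with a structure statement sharp enough to seed your iteration. The observation that $K_4^{3-}$-freeness is equivalent to triangle-free links, and that each link of $S_6$ is a $C_5$, is correct, but as you note, pushing Mantel stability through each link and then gluing the local partitions into a global $6$-partition compatible with $S_6$ is precisely what is missing and is not supplied by the paper either. In short: your plan is a reasonable outline of how one might eventually prove the conjecture, but it contains the same open gap the paper leaves unresolved, so it should be read as a research programme rather than a proof.
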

Flag algebras only give $\sigma(K_4^{3-})\leq 0.09306286$. The minimum codegree threshold of $K_4^{3-}$ was determined to be $\pi_2(K_4^{3-})=1/4$ by Falgas-Ravry, Pikhurko, Vaughan and Volec~\cite{FalgasK4-} solving a conjecture by Nagle~\cite{codegreeconj}. The lower bound is obtained by a construction originally due to Erd\H{o}s and Hajnal~\cite{MR0337636}. Given a tournament $T$ on the vertex set $[n]$, define a $3$-graph $C(T)$ on $[n]$ by taking the edge set to be all triples of vertices inducing a cyclically oriented triangle in $T$. No tournament on $4$ vertices can contain more than two cyclically oriented triangles, hence $C(T)$ is $K_4^{3-}$-free. If $T$ is chosen uniformly at random, then the minimum codegree of $C(T)$ is $n/4-o(n)$ with high probability.

The uniform Tur\'an density of $K_{4}^{3-}$ was determined to be $\pi_u(K_4^{3-})=1/4$ by Glebov, Kr\'al' and Volec~\cite{MR3474967} using flag algebras. The same result was also obtained by Reiher, R\"odl, and Schacht~\cite{MR3790065} via regularity method of hypergraphs  without flag algebras. The lower bound is also achieved by $C(T)$.

\subsection{\texorpdfstring{\{$K_4^{3-},F_{3,2},C_5$\}}{TEXT}}
\label{K43minusF32C5}
Denote by $H_7$ the $3$-graph on seven vertices with edge set 
\begin{align*} E(H_7)=\{124,137,156,235,267,346,457,653,647, 621,542,517,431,327\}.
\end{align*}
Falgas-Ravry and Vaughan~\cite{RavryTuran} proved that $\sigma(K_4^{3-},F_{3,2},C_5)=12/49$, where the lower bound is achieved by the blow-up of $H_7$ on $n$ vertices.  However, the codegree squared density is achieved by a different $3$-graph, namely the complete balanced $3$-partite $3$-graph $S_n$. The $3$-graph $S_n$ is $K_4^{3-}$-free, $F_{3,2}$-free and $C_5$-free and it has codegree squared sum $\textup{co}_2(S_n)=1/27 n^4$. Thus, $\sigma(\{K_4^{3-},F_{3,2},C_5\})\geq 2/27$. Flag algebras give $\sigma(\{K_4^{3-},F_{3,2},C_5\})\leq 2/27$, hence,
\begin{align*}
\sigma(\{K_4^{3-},F_{3,2},C_5\})= \frac{2}{27}.
\end{align*}
For the minimum codegree threshold we have 
\begin{align*}
    \frac{1}{12}\leq\pi_2(\{K_4^{3-},F_{3,2},C_5\})\leq 0.186,
\end{align*}
where the upper bound is obtained by flag algebras and the lower bound by the following construction. Consider the intersection of the Erd\H{o}s-Hajnal~\cite{MR0337636} random tournament construction $C(T)$ with the $3$-graph having vertex partition $A_1\cup A_2 \cup A_3$, where $||A_i|-|A_j||\leq 1$ for $i,j\in [3]$, and edges $xyz$ for $x,y\in A_i,z\in A_{i+1}$ for $i=1,2,3$, where $A_4=A_1$. This $3$-graph has minimum codegree $n/12-o(n)$ with high probability, and is $C_5$, $F_{3,2}$ and $K_4^{3-}$-free.

For the uniform Tur\'an density we have
\begin{align*}
\pi_u(\{K_4^{3-},F_{3,2},C_5\})\leq \pi_u(F_{3,2})=0.
\end{align*}




\subsection{\texorpdfstring{\{$K_4^{3-},F_{3,2}$\}}{TEXT}}
\label{K43-F32}
Let $G$ be the blow-up of $S_6$ on $n$ vertices, recall $S_6$ was defined in Section~\ref{K43minus}. This $3$-graph $G$ is $K_4^{3-}$ and $F_{3,2}$-free. Falgas-Ravry and Vaughan~\cite{RavryTuran} proved that $\pi(K_4^{3-},F_{3,2})=5/18$ with the lower bound obtained by $G$. The codegree squared sum of $G$ is
\begin{align*} \textup{co}_2(G)= \binom{6}{2}\left(\frac{n}{6} \right)^2 \left( \frac{n}{3}\right)^2 = \frac{5}{108}n^4.
\end{align*}
Thus, $\sigma(\{K_4^{3-},F_{3,2}\})\geq 5/54$. Flag algebras give $\sigma(\{K_4^{3-},F_{3,2}\})\leq 5/54$, hence
\begin{align*}
\sigma(\{K_4^{3-},F_{3,2}\})=\frac{5}{54}.
\end{align*}
For the minimum codegree threshold we have 
\begin{align*}
    \frac{1}{12}\leq\pi_2(\{K_4^{3-},F_{3,2},C_5\})\leq \pi_2(\{K_4^{3-},F_{3,2}\} \leq 0.202,
\end{align*}
where the upper bound comes from flag algebras. For the uniform Tur\'an density we have
\begin{align*}
\pi_u(\{K_4^{3-},F_{3,2}\})\leq \pi_u(F_{3,2})=0.
\end{align*}


\subsection{\texorpdfstring{\{$K_4^{3-},C_5$\}}{TEXT}}
\label{K43-C5}
Denote by $G$ be the iterated blow-up of an edge on $n$ vertices. In other words, it is obtained from a complete balanced $3$-partite $3$-graph by inserting a complete balanced $3$-partite $3$-graph in each of the parts iteratively. 
The $3$-graph $G$ is $K_4^{3-} $ and $C_5$-free. This construction gives the current best lower bound on the Tur\'an density of $\{K_4^{3-},C_5\}$, see~\cite{RavryTuran}. For the codegree squared density, it gives $\sigma(\{K_4^{3-},C_5\})\geq {1}/{13}$.
We conjecture it to be the extremal example in $\ell_2$-norm.
\begin{conj}
\begin{align*}
\sigma(\{K_4^{3-},C_5\})=\frac{1}{13} \approx 0.0769230769.
\end{align*}
\end{conj}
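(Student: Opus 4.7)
The plan is to prove $\sigma(\{K_4^{3-},C_5\})\le 1/13$ by coupling a stability statement with an inductive recursion that mirrors the self-similar construction. As preparation, the lower bound follows because the iterated blow-up $G_n$ of an edge satisfies $\textup{co}_2(G_n)=n^4/27+3\,\textup{co}_2(G_{n/3})$ (the first term being the cross-pair contribution of the top tripartition), and this recursion solves to $\textup{co}_2(G_n)=n^4/26+o(n^4)$. For the upper bound, the starting point is the identity $\textup{co}_2(G)=\sum_{e \in E(G)}w(e)$ combined with the pointwise bound $w(e)\le n$; the latter holds for every edge $e=xyz$ because $K_4^{3-}$-freeness forces the sets $N(xy)\setminus\{z\}$, $N(xz)\setminus\{y\}$, $N(yz)\setminus\{x\}$ to be pairwise disjoint. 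Together with $|E(G)|\le (0.25108+o(1))\binom{n}{3}$, this only gives the weak bound $\textup{co}_2(G)\le n^4/24+o(n^4)$ (i.e., $\sigma\le 1/12$), so the $C_5$-free condition must be leveraged to reduce the average weight: in the extremal $G_n$ the average value of $w(e)$ is $12n/13$, and $w(e)=n$ is attained only for edges crossing a global tripartition.

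The crux is the stability step. I would show that any $\{K_4^{3-},C_5\}$-free $G$ with $\textup{co}_2(G)\ge (1/13-\eta)\binom{n}{2}(n-2)^2$ admits a partition $V(G)=V_1\cup V_2\cup V_3$ with $|V_i|=n/3\pm \delta(\eta)n$ such that all but $\delta(\eta)n^3$ triples with one vertex in each part are edges and all but $\delta(\eta)n^3$ remaining triples are non-edges. My approach rests on two inputs: first, every link $L(v)$ is triangle-free (forced by $K_4^{3-}$-freeness), and near-extremality forces each link to be so dense that by the Andr\'asfai--Erd\H{o}s--S\'os stability theorem it is $o(1)$-close to a balanced bipartite graph; second, $C_5$-freeness globally aligns these near-bipartitions across different vertices into a single coherent tripartition of $V(G)$.

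Given the stability, the induction step is short. Cross-pair codegrees are at most $|V_k|+o(n)$, contributing at most $n^4/27+o(n^4)$; and intra-part pairs pick up only $o(n)$ external codegree, so $\sum_{\{u,v\}\subset V_i}d_G(u,v)^2 \le \textup{exco}_2(|V_i|,\{K_4^{3-},C_5\})+o(n^4)$ for each $i$, giving a recursion that iterates to $\textup{co}_2(G)\le n^4/26+o(n^4)$. The main obstacle is the alignment in the stability step: the $C_5$-freeness is essential here because $K_4^{3-}$-freeness alone is consistent with the strictly larger $\sigma(K_4^{3-})\ge 4/43$ attained by the iterated blow-up of $S_6$ (which itself contains $C_5$), so the proof must genuinely exploit how a forbidden tight $C_5$ propagates structural constraints between different links. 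The flag algebra upper bound $0.07695$ lies within $3\times 10^{-5}$ of $1/13$, suggesting that near-extremal graphs are rigid and this alignment is within reach, but making it rigorous will require delicate bookkeeping of error terms.
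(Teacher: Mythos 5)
This statement is a \emph{conjecture} in the paper, not a theorem: the paper supplies only the lower bound $\sigma(\{K_4^{3-},C_5\})\geq 1/13$ (via the iterated blow-up of an edge, exactly the recursion $\textup{co}_2(G_n)=n^4/27+3\,\textup{co}_2(G_{n/3})$ you describe) and a flag-algebra upper bound of $0.07694083$, which is strictly larger than $1/13\approx 0.07692308$. There is no proof in the paper to compare against; what you are proposing is an attack on an open problem, and the relevant question is whether your plan is sound.

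The lower-bound computation in your sketch is correct. The pointwise bound $w(e)\leq n$ from $K_4^{3-}$-freeness and the resulting ``weak'' bound via the Tur\'an density are also fine as warm-up observations. The serious gap is in the stability step, and I think the gap is conceptual rather than a matter of bookkeeping. Your plan leans on the Andr\'asfai--Erd\H{o}s--S\'os stability theorem to conclude that each link is close to a balanced bipartite graph. But in the conjectured extremal example the link of a top-level vertex $v\in V_1$ is a \emph{disjoint union} of complete bipartite graphs between $V_2$ and $V_3$, between $V_{1,2}$ and $V_{1,3}$, and so on down the recursion. This graph is triangle-free and indeed bipartite, but it has only $n^2/8$ edges---density $1/2$ relative to the triangle-free maximum---and average degree $n/4$, well below the $2n/5$ minimum-degree threshold that AES requires. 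Dense stability for triangle-free graphs simply does not kick in here, so you cannot force near-bipartiteness of the link from near-extremality of $\textup{co}_2$. More importantly, even if you could, the bipartition of the link is $(V_2\cup V_{1,2}\cup\cdots)$ versus $(V_3\cup V_{1,3}\cup\cdots)$, which does not read off the top tripartition $V_1,V_2,V_3$; recovering the recursive structure from link information is a much harder inverse problem than a single tripartite stability statement. This is the same obstruction that keeps the Tur\'an density of $\{K_4^{3-},C_5\}$ (and of $K_4^{3-}$ alone) open: the conjectured extremizers are iterated blow-ups, and no stability machinery currently handles that fractal structure. Your inductive step is reasonable once stability is granted, but as written the plan does not supply the stability it needs.
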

Flag algebras give $\sigma(\{K_4^{3-},C_5\})\leq 0.07694083$ for the codegree squared density. For the minimum codegree threshold we have 
\begin{align*}
    \frac{1}{12}\leq\pi_2(\{K_4^{3-},F_{3,2},C_5\})\leq \pi_2(\{K_4^{3-},C_5\} \leq 0.202,
\end{align*}
where the upper bound comes from flag algebras. For the uniform Tur\'an density, we have
\begin{align*}
    \frac{2}{27}\leq \pi_u(\{K_4^{3-},C_5\}) \leq \pi_u(C_5) \leq \frac{4}{27},
\end{align*}
where the upper bound was obtained by Buci\'c, Cooper, Kr\'{a}l', Mohr and Munh\'a
Correia~\cite{personalComSamuel}. The lower bound is obtained by the following construction. Color pairs of vertices with colors red, blue, green uniformly at random, independently from each other. For a triple $u<v<w$ place an edge if $vw$ is green and one of $uv,uw$ blue and the other red.     

\subsection{\texorpdfstring{\{$F_{3,2},J_4$\}}{TEXT}}
\label{F32-J4}
Let $G$ be the blow-up of $K_4^3$ on $n$ vertices. This graph $G$ is  $F_{3,2}$-free and $J_4$-free. Falgas-Ravry and Vaughan~\cite{RavryTuran} proved that $\pi(\{F_{3,2},J_4\})=3/8$, where the lower bound is achieved by $G$. The $3$-graph $G$ has codegree squared sum $\textup{co}_2(G)= 3/32 n^4(1+o(1))$, thus, $\sigma(\{F_{3,2},J_4\})\geq 3/16$. Flag algebras give $\sigma(\{F_{3,2},J_4\})\leq 3/16$, hence
\begin{align*}
    \sigma(F_{3,2},J_4)=\frac{3}{16}.
\end{align*}
For the minimum codegree threshold we have 
\begin{align*}
    \frac{1}{12}\leq\pi_2(\{K_4^{3-},F_{3,2}\})\leq \pi_2(\{F_{3,2},J_4\} \leq \pi_2(F_{3,2})\leq \frac{1}{3},
\end{align*}
 because $K_4^{3-}$ is a subhypergraph of $J_4$. For the uniform Tur\'an density we have
\begin{align*}
\pi_u(\{F_{3,2},J_4\})\leq \pi_u(F_{3,2})=0.
\end{align*}

\subsection{\texorpdfstring{\{$F_{3,2},J_5$\}}{TEXT}}
\label{F32-J5}Falgas-Ravry and Vaughan~\cite{RavryTuran} proved that $\pi(\{F_{3,2},J_5\})=3/8$, where the blow-up of $K_4^3$ on $n$ vertices achieves the lower bound. Since $J_4\subset J_5$, we have 
\begin{align*}
    \frac{3}{16}\leq \sigma(\{F_{3,2},J_4\}) \leq \sigma(\{F_{3,2},J_5\}) \quad \quad \text{and} \quad \quad \frac{1}{12}\leq \pi_2(\{F_{3,2},J_4\}) \leq \pi_2(\{F_{3,2},J_5\}).
\end{align*}
By flag algebras we have for the codegree squared density $\sigma(\{F_{3,2},J_5\})\leq \frac{3}{16}$ and thus $\sigma(\{F_{3,2},J_5\})=3/16$, and for the minimum codegree threshold $\pi_2(\{F_{3,2},J_5\})\leq 0.28$. For the uniform Tur\'an density we have
\begin{align*}
\pi_u(\{F_{3,2},J_5\})\leq \pi_u(F_{3,2})=0.
\end{align*}

\subsection{\texorpdfstring{$J_4$}{TEXT}}
\label{sec:J4}
Let $G$ be the iterated blow-up of the complement of the Fano plane on $n$ vertices. This $3$-graph is $J_4$-free and conjectured to be an asymptotically sharp example in $\ell_1$-norm, see \cite{DaisyBollobas}. It has codegree squared sum of 
\begin{align*}
    \textup{co}_2(G)=\frac{48}{343} n^4 \sum \left(\frac{1}{7^4}\right)^i=\frac{7}{50} n^4.
\end{align*} 
Thus $\sigma(J_4)\geq 0.28$. We conjecture it to be the extremal example in $\ell_2$-norm. 
\begin{conj}
\begin{align*}
\sigma(J_4)=0.28.
\end{align*}
\end{conj}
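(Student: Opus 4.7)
The lower bound is the iterated blow-up of $\overline{\mathbb{F}}$, so the work is to prove the matching upper bound $\sigma(J_4)\le 0.28$. The starting observation is the clean structural translation: $G$ is $J_4$-free if and only if every link graph $L_G(v)$ is $K_4$-free. My plan is to combine a sharpened flag algebra bound with a stability-plus-iteration argument reflecting the fact that the extremal construction is genuinely iterated rather than a single-level blow-up.

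\textbf{Step 1 (near-sharp flag algebra bound).} Since the current bound $\sigma(J_4)\le 0.2808$ is already within $10^{-3}$ of $0.28$, I would first rerun the SDP with flags of order 8 or 9 to both shrink the numerical gap and, more importantly, to extract a dual certificate that forces any near-extremal hypergraph to locally resemble a blow-up of $\overline{\mathbb{F}}$. The Lagrangian-style expression in \eqref{eq:fa} already makes contributions from $K_4^{3}$ and $K_4^{3-}$-configurations explicit, which is useful because $J_4$-freeness controls precisely the density of such configurations inside links.

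\textbf{Step 2 (stability for the top level).} I would show that if $G$ is a $J_4$-free hypergraph on $n$ vertices with $\textup{co}_2(G)\ge (\sigma(J_4)-\delta)\binom{n}{2}(n-2)^2$, then $V(G)$ admits a partition $V_1,\dots,V_7$ with $|V_i|=(1/7\pm o(1))n$ such that all but $o(n^3)$ edges of $G$ agree with those of the blow-up of $\overline{\mathbb{F}}$ along this partition. The Erd\H{o}s--Simonovits stability theorem for $K_4$-free graphs forces each link $L_G(v)$ to be $o(n^2)$-close to the Tur\'an graph $T_3(n-1)$, and aligning the three color classes coherently across vertices yields the global 7-partition with the Fano-complement pattern on cross-edges.

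\textbf{Step 3 (recursion).} Write
\begin{align*}
\textup{co}_2(G)=\sum_{i=1}^{7}\textup{co}_2(G[V_i])+\sum_{i\ne j}\sum_{x\in V_i,\,y\in V_j}d_G(x,y)^2,
\end{align*}
noting that each $G[V_i]$ remains $J_4$-free. Cross pairs with $x\in V_i,\,y\in V_j$ have codegree at most $4n/7+o(n)$ (since $\{i,j\}$ is in exactly $4$ edges of $\overline{\mathbb{F}}$), producing a cross-term of at most $(48/343+o(1))n^4$. Applying induction to each $G[V_i]$ yields a fixed-point equation $c^{*}=48/343+c^{*}/343$, whose solution is the conjectured density.

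\textbf{Main obstacle.} The critical difficulty sits in Step 2: formulating the stability statement so that it is both implied by near-maximum $\ell_2$ codegree and robust enough to survive iteration with only $o(1)$ losses per level. Analogous iterated problems such as Frankl--F\"uredi's extremal problem for $K_4^{3-}$ have long resisted completion at exactly this point, so the most likely path to success is to exploit additional rigidity provided by the $\ell_2$ objective over the $\ell_1$ one: squared codegrees penalize irregularity heavily, so deviations from a balanced Fano-complement structure should be quantitatively costly in a way that can be folded directly into the recursion without going through an $\ell_1$-stability intermediate.
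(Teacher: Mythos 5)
This statement is a \emph{conjecture} in the paper, not a theorem: the authors only present the iterated blow-up of $\overline{\mathbb{F}}$ as a lower bound and a flag-algebra upper bound of $0.28079696$, and there is no proof to compare your write-up against. Your text is a plan of attack rather than a proof, so it cannot be checked as one.

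That said, there are concrete problems with the plan itself. First, the fixed-point equation at the end of Step~3 does not evaluate to the conjectured constant. From $c^{*}=48/343+c^{*}/343$ one gets $c^{*}=48/342=8/57$, hence $\sigma=2c^{*}=16/57\approx 0.28070$, not $0.28=7/25$. In fact this is what the iterated blow-up actually achieves: the level-$i$ contribution to $\textup{co}_2$ is $7^{i}\cdot\frac{48}{343}\bigl(n/7^{i}\bigr)^{4}=\frac{48}{343}\,n^{4}/7^{3i}$, so the geometric series has ratio $1/7^{3}$, not $1/7^{4}$, and the construction gives $\textup{co}_2 = \frac{8}{57}n^{4}(1+o(1))$, i.e.\ $\sigma(J_4)\ge 16/57$. (The paper's displayed series $\sum (1/7^{4})^{i}$ and the resulting value $7/50$ appear to contain the same off-by-one-power slip; the construction is actually slightly better than $0.28$ and lies only about $9\cdot10^{-5}$ below the flag-algebra upper bound.) Either way, your recursion as written does not close on the number you set out to prove.

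Second, the load-bearing steps are asserted, not argued. In Step~3 the claim that cross pairs with $x\in V_i$, $y\in V_j$, $i\neq j$, satisfy $d_G(x,y)\le 4n/7+o(n)$ is a property of the extremal construction, not a consequence of $J_4$-freeness together with the approximate partition; you would need a genuine ``clean up and improve'' argument to show that any codegree exceeding $4n/7$ by a positive fraction of $n$ forces either a $J_4$ or a loss in $\textup{co}_2$, and no such argument is offered. In Step~2, the hard content is the coherent alignment of the per-vertex tripartitions of the $K_4$-free links into a single global $7$-partition matching the Fano-complement pattern; link-by-link Erd\H{o}s--Simonovits stability gives you a tripartition of each $L_G(v)$, but turning those $n$ local partitions into one global $7$-part structure, with errors small enough to survive the recursion, is exactly the step that makes iterated-blow-up problems (e.g.\ $K_4^{3-}$) so resistant, and your write-up acknowledges this but does not address it. Step~1 (a higher-order SDP certificate) is an optimistic expectation, not a step one can verify. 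In short: the plan is a reasonable sketch of how one might eventually try to prove the conjecture, but it does not constitute a proof, its target constant is off, and its two critical lemmas are unsupported.
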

Flag algebras only give $\sigma(J_4)\leq 0.28079696$. For the minimum codegree threshold and the uniform Tur\'an density of $J_4$ we have 
\begin{align*}
    \frac{1}{4} \leq \pi_2(K_4^{3-})\leq \pi_2(J_4)\leq 0.473 \quad \quad \quad \text{and} \quad \quad \quad \frac{1}{3}  \pi_u(J_4)\leq 4/9,
\end{align*}
where the upper bound on $\pi_2$ was obtained using flag algebras and the lower bound bound holds because $K_4^{3-} \subset J_4$. The uniform Tur\'an density bounds are from Theorem~5.6 in~\cite{MR3790065}.

\subsection{\texorpdfstring{$J_5$}{TEXT}}
\label{sec:J5}
Since $J_4\subset J_5$, we get the following lower bounds:
\begin{align*}
    \frac{1}{2} \leq \pi(J_4)\leq \pi(J_5), \   0.28 \leq \sigma(J_4)\leq \sigma(J_5), \quad \text{and} \quad  \frac{1}{4} \leq \pi_2(J_4)\leq \pi_2(J_5).
\end{align*}
Flag algebras give us the following upper bounds
\begin{align*}
     \pi(J_5)\leq 0.64474184, \quad \quad  \sigma(J_5)\leq  0.4427457, \quad \quad  \pi_2(J_5)\leq 0.613. \quad \quad  
\end{align*}
It would be interesting, if one could separate the parameters of $J_4$ and $J_5$ from each other. For the uniform Tur\'an density, Reiher, R\"odl and Schacht~\cite{MR3790065} proved $7/16 \leq \pi_u(J_5)\leq 9/16$.

\subsection{\texorpdfstring{$C_5$}{TEXT}}
\label{sec:C5}
Denote by $G(n)$ the $3$-graph on $n$ vertices where the vertex set is partitioned into two sets $A$ and $B$ of sizes $bn$ and $(1-b)n$, where $b\in(0,1)$ and $E(G(n))$ consists of all triples $xyz$, where $x,y\in B$ and $z\in A$. This $3$-graph is $C_5$-free. Let $G_{it}$ be the $3$-graph constructed from $G(n)$ by iteratively adding $G(bn)$ inside the class $B$. The $3$-graph $G_{it}$ is also $C_5$-free. This construction gives the current best-known lower bound on the Tur\'an density of $C_5$, see~\cite{F32Mubayi}. It's codegree squared sum can be lower bounded as follows
\begin{align*}
    \textup{co}_2(G_{it}) > \left(b^2\frac{(1-b)^2}{2} +  b(1-b)b^2 \right)  \left( \frac{1}{1-(1-b)^4}\right)n^4 > 0.12597 n^4
\end{align*}
for $b$ of size about $0.7$, thus $\sigma(C_5)\geq 0.25194$. Flag algebras give $\sigma(C_5)\leq 0.25310457$. For the minimum codegree threshold of $C_5$ we have
\begin{align*}
    \frac{1}{3}\leq \pi_2(C_5)\leq 0.3993,
\end{align*}
where the upper bound comes from flag algebras and the lower bound from the graph with vertex partition $A_1\cup A_2 \cup A_3$, where $||A_i|-|A_j||\leq 1$ for $i,j\in [3]$, and edges $xyz$ for $x,y\in A_i,z\in A_{i+1}$ for $i=1,2,3$, where $A_4=A_1$.  

The uniform Tur\'an density of $C_5$ is $\sigma(C_5)=4/27$. The lower bound construction was presented by Reiher in \cite{MR4111729} and the upper bound proof by Buci\'c, Cooper, Kr\'{a}l', Mohr and Munh\'a
Correia~\cite{personalComSamuel}. We note that a standard application of the method of flag algebras only gives an upper bound of $\sigma(C_5)\leq 0.402$.

\subsection{\texorpdfstring{$C_5^-$}{TEXT}}
\label{sec:C5minus}
Denote by $G$ be the iterated blow-up of an edge on $n$ vertices. The $3$-graph $G$ is $C_5^-$-free. Very recently, Lidick\'y, Mattes and Pfender~\cite{BernardFlo}, and independently Bodn\'ar, Le\'on, Liu and Pikhurko~\cite{OlegXizhi} proved that this is the asymptotical extremal example for the Tur\'an density, i.e. $\pi(C_5^-)=1/4$. For the codegree squared density, it gives $\sigma(C_5^-)\geq {1}/{13}$.
We conjecture it to be the extremal example in $\ell_2$-norm.
\begin{conj}
\begin{align*}
\sigma(C_5^-)=\frac{1}{13} \approx 0.0769230769.
\end{align*}
\end{conj}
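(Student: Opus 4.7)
The lower bound $\sigma(C_5^-) \geq 1/13$ is realized by the iterated blow-up $G_n$ of an edge on $n$ vertices described above. Tracking pairs across the top tripartition gives the self-similarity recursion $\textup{co}_2(G_n) = n^4/27 + 3\,\textup{co}_2(G_{n/3}) + o(n^4)$; setting $c := \lim_{n\to\infty} \textup{co}_2(G_n)/n^4$ yields $c(1-1/27)=1/27$, so $c = 1/26$ and hence $\sigma(C_5^-) \geq 2c = 1/13$. The difficulty lies entirely in the matching upper bound, and the plan is a flag algebras plus stability plus induction bootstrap, along the lines used in~\cite{BalCleLidmain} for $K_4^3$.

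First, I would push the flag algebra upper bound beyond the current $0.07726$ and, more importantly, extract from it a quantitative stability statement: if $G$ is a $C_5^-$-free $3$-graph on $n$ vertices with $\textup{co}_2(G) \geq (1/13-\eta)\binom{n}{2}(n-2)^2$, then there is a tripartition $V(G)=V_1\cup V_2\cup V_3$ with $||V_i|-n/3|=o(n)$ such that all but $o(n^3)$ edges are transversal (one vertex in each part). Such a stability lemma should follow by identifying which subflags the near-optimal SDP solution assigns positive mass to, and matching these against the local structure of the iterated blow-up, namely that the link graph $L(v)$ of a typical vertex $v$ is close to a balanced complete bipartite graph between the two other top-level parts.

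Second, once the stability lemma is available, induction on $n$ closes the argument. The sub-$3$-graphs $G[V_i]$ are themselves $C_5^-$-free, so $\textup{co}_2(G[V_i]) \leq \textup{exco}_2(|V_i|,C_5^-)$, while the transversal contribution is at most $3(n/3)^2(n/3)^2 = n^4/27$. Thus one obtains $\textup{exco}_2(n,C_5^-) \leq n^4/27 + 3\,\textup{exco}_2(n/3,C_5^-) + o(n^4)$, which unrolls to $\textup{exco}_2(n,C_5^-) \leq (1/26+o(1))n^4$, i.e.\ $\sigma(C_5^-) \leq 1/13$. A weight-shifting step based on the function $w(e)$ defined in Section~\ref{prepa} should convert the approximate tripartiteness produced by stability into the exact transversal bound used above.

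The main obstacle is the stability lemma rather than the bound on $\sigma$ itself. Because the conjectured extremal construction is self-similar across all scales, no finite-dimensional SDP can ever yield a sharp certificate in isolation; the argument must loop the flag algebra bound through the recursion. Making this rigorous requires classifying near-extremal $C_5^-$-free $3$-graphs of small order, and the relatively weak local constraint of $C_5^-$-freeness (it forbids only the four-edge pattern $\{123,234,345,145\}$, not a dense subconfiguration like $K_4^{3-}$) makes that classification delicate. I would expect the technical heart of the proof to be a careful link-graph analysis showing that, in any near-extremal $C_5^-$-free hypergraph, $L(v)$ is essentially triangle-free and bipartite with a balanced bipartition for almost every $v$, which then forces the desired tripartite top-level structure and permits the recursion to run.
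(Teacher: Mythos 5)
The statement you are addressing is an open conjecture in the paper, not a theorem; the paper offers no proof of it. What the paper establishes is only the lower bound $\sigma(C_5^-)\geq 1/13$ (via the iterated blow-up of a single edge, the same construction you analyze) together with a flag-algebra upper bound $\sigma(C_5^-)\leq 0.07725405$, which is strictly larger than $1/13\approx 0.07692$. Your computation of the lower bound through the self-similarity recursion $\textup{co}_2(G_n)=n^4/27+3\,\textup{co}_2(G_{n/3})+o(n^4)$, giving $c=1/26$ and $\sigma\geq 2c=1/13$, is correct and agrees with the paper.

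The remainder of your proposal is a program rather than a proof, and the genuinely missing ingredient is the stability lemma, which you yourself identify as the technical heart. At present there is no evidence that the flag-algebra upper bound can even be pushed to $1/13+o(1)$: the paper reports $0.07725405$, and since the conjectured extremum is a truly iterated (self-similar) construction, no finite SDP can certify the value exactly -- the same obstruction that keeps $\pi(K_4^{3-})=2/7$ open. Without a numerically tight bound, the plan of reading stability off ``which subflags carry positive mass'' has nothing to anchor to. Your induction-through-the-recursion step is sound in outline assuming such a stability lemma (modulo standard care with unbalanced part sizes, the tail of the unrolling, and the fact that $o(n^3)$ non-transversal edges may still contribute nonnegligibly to the codegrees of individual same-part pairs), and it mirrors the paper's strategy for $K_4^3$ and $F_5$. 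But those results rest on non-iterated extremal constructions for which finite flag-algebra certificates of the exact density are available; the analogy does not transfer to $C_5^-$, and establishing the required stability statement appears to be of essentially the same difficulty as the conjecture itself.
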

Flag algebras give $\sigma(C_5^-)\leq 0.07725405$ for the codegree squared density and $\pi_2(C_5^-)\leq 0.136$ for the minimum codegree threshold. Recently, Piga, Sales and Sch\"ulke~\cite{PiSaSc} proved that $\pi_2(C_5^-)=0$.

The uniform Tur\'an density of $C_5^-$ is $\pi_u(C_5^-)=0$. This was observed by Reiher, R\"{o}dl and Schacht~\cite{MR3764068} using Theorem~\ref{uniformturan}. For the sake of completeness, we repeat the proof here. Let $v_1,v_2,v_3,v_4,v_5$ form a $C_5^-$, that is $v_1v_2v_3,v_2v_3v_4,v_3v_4v_5,v_4v_5v_1$ are edges. Consider the ordering $v_3<v_2<v_1<v_5<v_4$ of the vertices. The edges of the shadow graph can be colored in the following way. Color the edges $v_3v_2,v_3v_5,v_1v_5$ red, the edges $v_1v_3,v_3v_4,v_1v_4$ blue, and the edges $v_1v_2,v_2v_4,v_4v_5$ green; see Figure~\ref{fig:colorings}. Thus, property (b) of Theorem~\ref{uniformturan} holds.

\subsection{\texorpdfstring{$K_5^=$}{TEXT}}
\label{sec:K_5^=}
Let $K_5^=$ be the $3$-graph on $5$ vertices with $8$ edges, where the two missing edges intersect in exactly one vertex. We have
\begin{align*}
   0.58656 \leq \frac{227}{387}\leq \pi(K_5^=)\leq 0.60961944 \quad \text{and} \quad 0.357942 \leq \frac{2909}{8127}\leq \sigma(K_5^=)\leq 0.388725.
    \end{align*}
The hypergraph providing both lower bounds is the following. Take the $3$-uniform hypergraph on $n$ vertices with vertex set $V_1\cup V_2 \cup V_3$ such that $||V_i|-|V_j||\leq 1$ for $i\neq j$ and edge set 
\begin{align*}
    \{abc: a\in V_1,b\in V_2, c\in V_3 \} \cup \{abc: a,b\in V_1,c\in V_2 \} \\ \cup \ \{abc: a,b\in V_2,c\in V_3 \} \cup  \{abc: a,b\in V_3,c\in V_1 \}.  
\end{align*} Now, place in each of the three classes an iterated blowup of $S_6$, recall $S_6$ was defined in Section~\ref{K43minus}. This $3$-graph has
\begin{align*}
    \frac{5}{54}n^3+30  \sum\left( \frac{n}{3} \cdot \frac{1}{6^i}\right)^3 +o(n^3)=\frac{227}{2322}n^3(1+o(1))\geq 0.58656 \binom{n}{3}(1+o(1))
\end{align*}
edges, codegree squared sum of
\begin{align*} \frac{4}{27}n^4 + 3 \frac{1}{2} \left(\frac{n}{3}\right)^4 \sum_{k\geq 1} \frac{5}{6^k} \left(\frac{2}{6^k}+1 \right)^2= \frac{2909}{8127} \frac{n^4}{2},
\end{align*}
and is $K_5^=$-free. We conjecture the lower bounds to be asymptotically sharp. 
\begin{conj}
 \begin{align*}\pi(K_5^=)=\frac{227}{387} \quad \quad \text{and} \quad \quad \sigma(K_5^=)=\frac{2909}{8127}.
 \end{align*}
\end{conj}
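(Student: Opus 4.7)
The plan is to combine a stability-based analysis with a recursive/iterative structural theorem, since the flag algebra bound $\sigma(K_5^=) \leq 0.388725$ does not match the conjectured value $2909/8127 \approx 0.357942$ and hence a pure computer-assisted approach will not suffice. Because the extremal construction has two levels --- an outer $C_n$ between three nearly-equal parts, and an iterated blow-up of $H_6$ inside each part --- any prospective proof must peel off these levels one at a time.

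First, I would attempt to prove a stability result: every $K_5^=$-free hypergraph $G$ on $n$ vertices with $\textup{co}_2(G) \geq (2909/8127 - \varepsilon) \cdot n^4/2$ must admit a tripartition $V(G) = V_1 \cup V_2 \cup V_3$ with $||V_i|-|V_j|| = o(n)$ such that the cross-class triples form, up to $o(n^3)$ exceptions, exactly the $C_n$ pattern (edges either meeting all three parts or having two vertices in one part and one in the cyclically next part). The key input is the dual certificate produced by flag algebras for the tetrahedron problem: the set of ``zero local profiles'' should dictate which transversal triples may appear. A symmetrization/shifting argument, combined with the fact that $K_5^= \supset K_4^3$, should then propagate this local restriction into a global tripartition.

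Next, focus on a single class $V_i$. Any tetrahedron inside $V_i$ combined with a cross-class pair supplied by the $C_n$ outer structure already begins to assemble a $K_5^=$, so the induced hypergraph $G[V_i]$ must avoid a strictly stronger local configuration than merely $K_5^=$. Verifying that this restriction is essentially equivalent to being $K_6^3$-free on $V_i$ is the heart of the recursion; once this is established, the conjectured extremal structure for $K_6^3$ (the iterated blow-up of $H_6$) can be plugged in inductively, yielding the claimed value of $\sigma(K_5^=)$. The corresponding Tur\'an density claim $\pi(K_5^=) = 227/387$ then follows from the same structural theorem by counting edges rather than squared codegrees.

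The main obstacle is twofold. First, the flag algebra bound is not tight, so extracting stability from it requires strengthening the semidefinite program, perhaps by adding higher-order constraints involving larger flags or densities of linked configurations, to push the numerical bound down to $2909/8127$ and thereby identify the correct set of zero-subgraphs in the dual certificate. Second, the recursive structure must survive through $\Theta(\log n)$ levels; any additive error accumulated per level would multiply along the way, so one needs a genuinely structural stability statement rather than merely an approximate edge-count. Finally, any such argument will almost certainly be conditional on the conjectured exact values of $\pi(K_4^3)$ and $\pi(K_6^3)$, both of which remain open, so an unconditional resolution of this conjecture seems to require progress on these long-standing problems first.
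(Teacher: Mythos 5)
This statement is a \emph{conjecture}, not a theorem: the paper offers no proof and makes no claim to one. What the paper provides is a construction achieving the lower bounds (a $C_n$-type outer tripartition with, inside each class, an iterated blow-up of the $6$-vertex $K_4^{3-}$-free hypergraph $S_6$ from Section~\ref{K43minus} --- the paper's label ``$H_6$'' there is a typo for $S_6$), together with non-matching flag algebra upper bounds ($\pi(K_5^=)\leq 0.60962$, $\sigma(K_5^=)\leq 0.38873$). Your write-up is likewise a research sketch rather than a proof, and you yourself concede at the end that an unconditional resolution is out of reach, so there is no discrepancy of ambition; but let me flag two concrete errors in the plan as stated.

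First, you identify the constraint governing the inner layer as $K_6^3$-freeness, claiming one should then plug in the conjectured extremal $K_6^3$-free structure. This is incorrect. If four vertices $a,b,c,d$ inside a class $V_i$ span a $K_4^{3-}$ and $e$ is any vertex in the cyclically next class $V_{i+1}$, then all six triples with two vertices in $V_i$ and $e$ are present, so $\{a,b,c,d,e\}$ has at least nine edges and therefore contains a $K_5^- \supset K_5^=$. So the induced hypergraph on each class must actually be $K_4^{3-}$-free, a strictly stronger requirement than $K_6^3$-freeness, and the inner structure in the extremal construction is precisely the iterated $S_6$-blow-up that is conjectured extremal for $K_4^{3-}$ (with $\sigma(K_4^{3-})$ conjectured to be $4/43$), not for $K_6^3$. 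Your claim that the recursion ``is conditional on $\pi(K_4^3)$ and $\pi(K_6^3)$'' should therefore refer to $K_4^{3-}$, and moreover, for the codegree squared version, $\sigma(K_4^3)=1/3$ is already a theorem of~\cite{BalCleLidmain}, so it is not a missing ingredient.

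Second, the appeal to the containment $K_4^3 \subset K_5^=$ to import $K_4^3$-stability and extract the outer $C_n$-tripartition does not work in the form you describe. The conjectured extremal $K_5^=$-free construction is \emph{not} $K_4^3$-free: if $abc$ is an edge of the inner $S_6$-blow-up on $V_1$ and $d\in V_2$, then $abd$, $acd$, $bcd$ are all cross-edges of $C_n$-type, so $\{a,b,c,d\}$ spans a $K_4^3$. Hence the dual certificate or stability machinery for the $K_4^3$-Tur\'an problem cannot be applied wholesale to propagate the tripartition; any argument must explain why the outer structure is $C_n$-like despite the presence of many tetrahedra, which is a genuinely different (and currently open) structural question.
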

For the minimum codegree threshold and uniform Tur\'an density of $K_5^=$, we have 
\begin{align*}
    \frac{1}{2} \leq \pi(K_4^3)\leq \pi_2(K_5^=)\leq 0.569 \quad \quad \text{and} \quad \quad   \frac{1}{2} \leq \pi_u(K_4^3)\leq \pi_u(K_5^=)\leq 0.567,
\end{align*}
where the upper bound is obtained via flag algebras. It would be interesting to decide if those parameters are equal to each other. 

\subsection{\texorpdfstring{$K_5^<$}{TEXT}}
\label{sec:K_5^<}
Let $K_5^<$ be the $3$-graph on $5$ vertices with $8$ edges, where the two missing edges intersect in exactly two vertices. Since $K_4^3$ is a subgraph of $K_5^<$, we have 
\begin{gather*}
\frac{5}{9} \leq \pi(K_4^3) \leq
    \pi(K_5^<), \quad  \frac{1}{3} \leq \sigma(K_4^3 ) \leq \sigma(K_5^<), \quad
    \frac{1}{2} \leq \pi_2(K_4^3) \leq \pi_2(K_5^<) \\
    \text{and} \quad \quad \frac{1}{2} \leq \pi_u(K_4^3) \leq \pi_u(K_5^<).
\end{gather*}
Using flag algebras we obtain 
\begin{gather*}
\pi(K_5^<)\leq 0.57704775,\quad \sigma(K_5^<)\leq 0.3402107, \quad \pi_2(K_5^<)\leq 0.560 \quad \text{and} \quad \pi_u(K_5^<)\leq 0.568.
\end{gather*}
We conjecture the lower bounds to be sharp.
\begin{conj}
\begin{align*}
\pi(K_5^<)=\frac{5}{9}, \quad \quad \sigma(K_5^<)=\frac{1}{3}, \quad \quad \pi_2(K_5^<)=\frac{1}{2} \quad \quad \text{and} \quad \quad \pi_u(K_5^<)=\frac{1}{2}.    
\end{align*}
\end{conj}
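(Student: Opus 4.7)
The plan is to leverage the containment $K_4^3\subset K_5^<$, so that every $K_4^3$-free hypergraph is also $K_5^<$-free. This immediately gives all four lower bounds using the constructions from Section~\ref{K43}: the graph $C_n$ delivers $\pi(K_5^<)\geq 5/9$ and $\sigma(K_5^<)\geq 1/3$, while the Czygrinow--Nagle and R\"odl random tournament constructions $C(T)$ give $\pi_2(K_5^<)\geq 1/2$ and $\pi_u(K_5^<)\geq 1/2$ respectively. All of the work lies in matching these by upper bounds.

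For the three upper bounds $\sigma(K_5^<)\leq 1/3$, $\pi_2(K_5^<)\leq 1/2$ and $\pi_u(K_5^<)\leq 1/2$, I would run a stability plus local clean-up argument. The flag-algebra certificates reported in the table already give the numerical bounds $0.340$, $0.560$ and $0.568$, which are close to the conjectured thresholds; the first step is to upgrade these certificates to a stability statement asserting that any near-extremal $K_5^<$-free $n$-vertex $3$-graph $G$ is $o(n^3)$-close in edit distance to $C_n$ (or to one of its equivalents in the Brown--Kostochka--Fon-der-Flaass--Frohmader family). The second step is a direct forbidden-configuration analysis: if $\{a,b,c,d\}$ spans a $K_4^3$ in a $K_5^<$-free $G$, then for every fifth vertex $x$ the six triples $\{x,i,j\}$ with $\{i,j\}\subset\{a,b,c,d\}$ contain at most four edges, and if exactly four are present the two non-edges must form a perfect matching on $\{a,b,c,d\}$; otherwise the two missing triples share the pair $\{x,v\}$ for some $v\in\{a,b,c,d\}$ and produce $K_5^<$. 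In a near-extremal graph this strong restriction should confine all $K_4^3$-copies to a sparse edge set that can be deleted at negligible cost to $\textup{co}_2$ (or the minimum codegree, or the uniform density), reducing to the already-established value $\sigma(K_4^3)=1/3$ from \cite{BalCleLidmain} and to the conjectured values for $\pi_2(K_4^3)$ and $\pi_u(K_4^3)$.

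The main obstacle is the Tur\'an density part $\pi(K_5^<)\leq 5/9$. Since $K_4^3\subseteq K_5^<$ we have $\pi(K_5^<)\geq \pi(K_4^3)$, so any proof of $\pi(K_5^<)=5/9$ automatically resolves Tur\'an's tetrahedron conjecture, and no stability-to-$K_4^3$ reduction can bypass this barrier. The same caveat applies in weaker form to the $\pi_2$ and $\pi_u$ parts, which are conditional on the still-unresolved values $\pi_2(K_4^3)=1/2$ and $\pi_u(K_4^3)=1/2$. Consequently, the only fully unconditional target is $\sigma(K_5^<)=1/3$, which I expect to be the most tractable, because here the reduction goal $\sigma(K_4^3)=1/3$ is a genuine theorem rather than a conjecture, and the flag-algebra gap from $1/3$ is small enough that extracting a usable stability statement from the sum-of-squares certificate looks feasible.
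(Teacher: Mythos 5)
This statement is a \emph{conjecture} in the paper, not a theorem; the paper offers no proof of any of the four equalities. What the paper actually establishes is exactly what you identify as the ``free'' part: since $K_4^3\subset K_5^<$ (indeed $\{1,2,4,5\}$ spans a $K_4^3$ in $K_5^<$, whose complement is $\{123,234\}$), each of $\pi,\sigma,\pi_2,\pi_u$ is monotone under subhypergraph containment, so the four lower bounds are inherited from the tetrahedron section. The paper then reports the flag-algebra upper bounds $0.577$, $0.340$, $0.560$, $0.568$ and writes ``We conjecture the lower bounds to be sharp.'' That is the entire content; your proposal is an attempt at a result that the paper does not claim.

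Your analysis of the barriers is correct and matches the paper's implicit stance. Since $\pi(K_4^3)\leq \pi(K_5^<)$, proving $\pi(K_5^<)\leq 5/9$ would settle Tur\'an's tetrahedron conjecture, and likewise $\pi_2(K_5^<)\leq 1/2$ and $\pi_u(K_5^<)\leq 1/2$ would respectively resolve the open conjectures $\pi_2(K_4^3)=1/2$ (Czygrinow--Nagle) and $\pi_u(K_4^3)=1/2$ (Erd\H{o}s); the table shows all three are still open. Your local observation is also correct: if $\{a,b,c,d\}$ spans a $K_4^3$, then for any fifth vertex $x$ at most four of the six triples through $x$ can be edges (five or six would put $K_5^-\supset K_5^<$ or $K_5^3$ on $\{a,b,c,d,x\}$), and if exactly four are present the two missing $x$-triples must use disjoint pairs from $\{a,b,c,d\}$, i.e.~$\{a,b,c,d,x\}$ spans $K_5^=$.

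Where the plan has a genuine gap is the $\sigma(K_5^<)\leq 1/3$ step. Two issues: (i)~the flag-algebra bound $0.340$ is not within a known stability radius of $1/3$, so there is no certificate to ``upgrade''; (ii)~even granting stability to $C_n$, it is not clear that near-extremal $K_5^<$-free $3$-graphs confine their $K_4^3$-copies to a deletable sparse set, because adding $o(n^3)$ edges to $C_n$ can create $\Theta(n\cdot o(n^3))$ copies of $K_4^3$ distributed throughout, and your local restriction does not automatically yield a global deletion bound on $\textup{co}_2$. In short: the conjecture is open, your reduction sketch is a plausible research direction but not a proof, and you are right to flag that the $\pi$, $\pi_2$, $\pi_u$ parts are conditional on major open problems.
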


\subsection{\texorpdfstring{$K_5^-$}{TEXT}}
\label{sec:K_5^-}
Let $K_5^-$ be the $3$-graph on $5$ vertices with $9$ edges. Markstr\"om~\cite{MR3190267} presented a construction giving $\pi(K_5^-)\geq \frac{46}{81}$. Note that $\pi(K_5^-) \geq \pi(K_5^=)\geq \frac{227}{387}$ giving a slight improvement of this. Very recently, after the appearance of this article, Liu, Sch\"ulke, Wang, Yang and Zhang~\cite{HongBjarne} presented an improved construction giving $\pi(K_5^-) \geq 0.602673$. We have 
\begin{gather*}
 \frac{2909}{8127} \leq \sigma(K_5^=) \leq \sigma(K_5^-), \quad \frac{1}{2} \leq\pi_2(K_4^3) \leq \pi_2(K_5^-) \quad
\text{and}  \quad \frac{1}{2}\leq \pi_u(K_4^3)\leq \pi_u(K_5^-).
\end{gather*}
It would be interesting to decide if those parameters are equal to each other. Using flag algebras we obtain 
\begin{align*}
    \pi(K_5^-)\leq 0.64208589,\quad \sigma(K_5^-)\leq 0.41961743,\quad \pi_2(K_5^-)\leq 0.621 \quad \text{and} \quad \pi_u(K_5^-)\leq 0.626.
\end{align*}

\section{Proof of Theorem~\ref{F5exact}}
\label{F5section}
Bollob\'as~\cite{Bollobascancellative} proved that the $\{F_4,F_5\}$-free $3$-graph with the largest number of edges is $S_n$, the complete balanced $3$-partite $3$-graph. His idea was to construct an almost partition of three pairwise disjoint sets with the property that every edge has at most one point from each and then use a counting argument to show that the number of edges is maximized when the classes form a partition and have equal size. 
This result of Bollob\'as~\cite{Bollobascancellative}  was extended by Frankl and F\"uredi~\cite{F5Frankl} who proved that for $n\geq 3000$ the $F_5$-free $3$-graph with the largest number of edges is $S_n$. Keevash and Mubayi~\cite{MubayiF5} proved a stability result for $F_5$-free graphs in the $\ell_1$-norm using a different method from the one in \cite{Bollobascancellative}. There is a straightforward modification of Bollob\'as' result which we will use to get a stability result for $F_5$-free graphs in the $\ell_2$-norm.

\begin{theo}
\label{F5stability}
For every $\varepsilon > 0$ there exists $\delta>0$ and $n_0\in \mathbb{N}$ such that if $G$ is an $F_5$-free $3$-uniform hypergraph on $n \geq n_0$ vertices with $\textup{co}_2(G)\geq (1-\delta) \frac{1}{27} n^4$, then we can partition $V(H)=A \cup B \cup C$ such that 
\begin{align}
\label{stabcondition1} 
  e(A)+e(B)+e(C)+e(A,B)+e(B,C)+e(A,C)\leq \varepsilon n^3
\end{align}
and 
\begin{align}
\label{stabcondition2} 
e(A,B,C)\geq \frac{1}{27}n^3 -\varepsilon n^3.
\end{align}
\end{theo}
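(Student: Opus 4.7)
My plan is to adapt Bollob\'as's argument for the extremal number of cancellative $3$-graphs to the $\ell_2$-norm setting, and then apply the $\ell_1$-stability result of Keevash and Mubayi~\cite{MubayiF5} to produce the partition.

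The first step is to show that $\textup{co}_2(G) \geq (1-\delta)\frac{n^4}{27}$ implies $e(G) \geq (1-\delta')\frac{n^3}{27}$ for some $\delta'=\delta'(\delta)$ tending to $0$ with $\delta$. The key structural property I will exploit is the following consequence of $F_5$-freeness: for any pair $(a,b)$ with $d(a,b) = t$, every two vertices $u,v \in N(a,b)$ satisfy $d_G(u,v) \leq 2$, since otherwise $\{abu,abv,uvw\}$ for some $w \notin \{a,b\}$ would form a copy of $F_5$. Writing $\textup{co}_2(G) = \sum_{e \in E(G)} w(e)$ with $w(xyz) = d(x,y) + d(x,z) + d(y,z)$, and splitting the sum by the magnitude of individual codegrees, I will argue that a deficit of $e(G)$ from $\frac{n^3}{27}$ forces a deficit of $\textup{co}_2(G)$ from $\frac{n^4}{27}$ of comparable order. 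The structural constraint above limits how concentrated the codegree vector can be: a pair $(a,b)$ with $d(a,b) \gg \frac{n}{3}$ produces a ``gain'' of at most $d(a,b)^2$ in $\textup{co}_2(G)$, but the $\binom{d(a,b)}{2}$ pairs inside $N(a,b)$ are forced to contribute at most $4\binom{d(a,b)}{2}$, which is drastically smaller than what analogous pairs contribute in $S_n$. Combining this trade-off with the Frankl and F\"uredi bound $e(G) \leq \frac{n^3}{27}(1+o(1))$ yields the desired reduction.

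With $e(G) \geq (1-\delta')\frac{n^3}{27}$ in hand, the $\ell_1$-stability theorem of Keevash and Mubayi~\cite{MubayiF5} for $F_5$-free $3$-graphs provides a partition $V(G) = A \cup B \cup C$ such that at most $\frac{\varepsilon n^3}{2}$ edges fail to cross all three parts, which is precisely \eqref{stabcondition1}. For condition \eqref{stabcondition2}, I then compute
\[
e(A,B,C) = e(G) - \bigl(e(A)+e(B)+e(C)+e(A,B)+e(A,C)+e(B,C)\bigr) \geq (1-\delta')\frac{n^3}{27} - \frac{\varepsilon n^3}{2} \geq \frac{n^3}{27} - \varepsilon n^3,
\]
provided $\delta'$ is chosen sufficiently small relative to $\varepsilon$.

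The main technical obstacle lies in the first step: translating $\ell_2$-near-extremality into $\ell_1$-near-extremality. A naive Cauchy-Schwarz bound $\textup{co}_2(G) \leq 3(n-2)\,e(G)$ is too weak by roughly a factor of three relative to the Frankl and F\"uredi edge bound. To obtain the sharp constant one must exploit the $F_5$-free structure on codegree neighborhoods carefully, uniformly across all potentially high-codegree pairs, and verify that the deficit induced inside each $N(a,b)$ is enough to compensate for the single pair's large contribution. This accounting is a straightforward but careful modification of Bollob\'as's original proof for the cancellative case, which controls the $\ell_1$ quantity $e(G)$ using the same structural property on codegree neighborhoods.
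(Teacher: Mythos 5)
Your plan takes a genuinely different route from the paper: the paper directly adapts Bollob\'as's near-tripartition construction to the $\ell_2$ setting, showing in one pass that the three codegree-neighborhoods $A',B',C'$ it builds must each have size close to $n/3$ (via a compactness argument for the polynomial $f(a,b,c',d)$ bounding $\textup{co}_2$), and then extracts both conditions from that partition without ever invoking the Keevash--Mubayi $\ell_1$-stability theorem. Your proposal instead factors through an $\ell_2 \to \ell_1$ reduction, and this is where the gap lies. You acknowledge that the naive bound $\textup{co}_2(G)\leq 3(n-2)e(G)$ loses a factor of $3$, and you claim that the structural observation ``$d(u,v)\leq 2$ for $u,v\in N(a,b)$'' can recover this factor by a ``straightforward but careful modification of Bollob\'as's original proof.'' But Bollob\'as's argument is precisely the iterated partition construction, and the only clean way to turn that structural observation into a sharp bound on $\textup{co}_2/e(G)$ is to build the near-tripartition $A\cup B\cup C$: once you know every edge crosses all three parts (up to a small error), each edge has weight at most $|A|+|B|+|C|=n$, giving $\textup{co}_2 \leq n\cdot e(G)(1+o(1))$, which is the factor-$3$-sharp inequality you need. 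A single high-codegree pair, or even a local accounting over all pairs, does not yield this; the constraint ``pairs inside $N(a,b)$ have codegree $\leq 2$'' forces overlaps between different forbidden sets that you cannot simply sum, and you have not specified how to handle them uniformly.

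So the concrete issue is: Step~1 is not established, and any proof of it that I can see essentially constructs the partition. But once you have the partition, the detour through Keevash--Mubayi's $\ell_1$-stability theorem is unnecessary --- you can directly verify \eqref{stabcondition1} and \eqref{stabcondition2} from the partition as the paper does (the cross-edge count $e(A,B,C)\geq \textup{co}_2(G)/n$ follows immediately once edges inside parts are excluded). Your Step~3 arithmetic for deducing \eqref{stabcondition2} is fine assuming Steps~1 and~2, and your core structural observation about codegrees inside $N(a,b)$ is correct and is indeed the engine of the paper's argument too. But as written, the proof proposal replaces the hard part of the theorem with a heuristic and adds a redundant citation; you should either supply the full accounting argument for Step~1 or, more economically, run Bollob\'as's construction directly in the $\ell_2$ setting and skip the $\ell_1$ reduction entirely.
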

Note that, unlike the $\ell_1$-norm case, here it is not instantly obvious why \eqref{stabcondition1} and \eqref{stabcondition2} are the same conditions, as having high codegree square sum does not imply the existence of many edges.
\begin{proof}
Let $\varepsilon>0$ and choose a constant $\delta=\delta(\varepsilon)$ such that the following argument holds. Let $G$ be an $n$-vertex $F_5$-free $3$-graph satisfying $\textup{co}_2(G)\geq (1-\delta)1/27n^4$. Choose $v_1,v_2\in V(G)$ such that $d(v_1,v_2)$ is maximum among all pairs of vertices. Denote $A'=N(v_1,v_2)$ and note that there is no hyperedge with two vertices from $A'$ and one vertex from $V(G)\setminus\{v_1,v_2\}$ because $G$ is $F_5$-free, see Figure~\ref{fig:ABC} for an illustration.
Choose $v_3\in A', v_4\in V(G)\setminus (A' \cup \{v_1,v_2\})$ such that $d(v_3,v_4)$ is maximum and denote $B'=N(v_3,v_4)\setminus \{v_1,v_2\}$. We have $A'\cap B'=\emptyset$, because for $x\in A'\cap B'$, $v_3v_4x$ is an edge with two vertices in $A'$ and one in $V(G)\setminus\{v_1,v_2\}$. Again, there are no edges with two vertices from $B'$ and one vertex from $V(G)\setminus\{v_1,v_2,v_3,v_4\}$ because $G$ is $F_5$-free. Next, choose $v_5\in A'\setminus \{v_3\},\ v_6\in B' \setminus \{v_1,v_2,v_3,v_4\}$ such that $d(v_5,v_6)$ is maximum. Set $C'=N(v_5,v_6)\setminus\{v_1,v_2,v_3,v_4\}$. We have $C' \cap A'=\emptyset$ and $C'\cap B'=\emptyset$ as otherwise there is an edge with two vertices in either $A'$ or $B'$ and one in $V(G)\setminus \{v_1,v_2,v_3,v_4\}$, which would give a copy of $F_5$. Set $E:=\{v_1,v_2,v_3,v_4,v_5,v_6\}$. There is no edge with two vertices in $C'$ and one in $V(G)\setminus E$, because if $c_1c_2x$ is such an edge, then $\{v_5,v_6,c_1,c_2,x\}$ induces an $F_5$. Set
\begin{align*}
A:= A' \setminus E, \quad    B:= B' \setminus E,  \quad D=V(G) \setminus (A\cup B\cup C'), \quad  C=C' \cup D=V(G)\setminus(A \cup B)
\end{align*}
and note that $V(G)=A\cup B\cup C' \cup D=A\cup B\cup C$. Denote $a=|A|/n$, $b=|B|/n$, $c'=|C'|/n$ and $d=|D|/n$. Thus, $a+b+c'+d=1$.

\begin{figure}
\begin{center}
    \begin{tikzpicture}
\draw 
 (-1,-0.2) coordinate (1) node[vtx,label=left:$v_1$]{}
 (-1,1.2) coordinate (2) node[vtx,label=left:$v_2$]{}
 (0.3,0.5) coordinate (A) node[vtx]{}
 (1,0.5) circle(1cm) node {$A'$} 
 (4,0.5) circle(1cm) node {$B'$} 
 (2.5,-1.5) circle(1cm)  node {$C'$}
 (1.5,1) coordinate (3) node[vtx,label=left:$v_3$]{}
 (2.5,2.5) coordinate (4) node[vtx,label=above:$v_4$]{}
 (3.5,1) coordinate (B) node[vtx,label=right:$ $]{}
 (1.5,0) coordinate (5) node[vtx,label=left:$v_5$]{}
 (3.5,0) coordinate (6) node[vtx,label=right:$v_6$]{}
 (2.5,-1.2) coordinate (C) node[vtx,label=right:$ $]{}
 ;
\draw[hyperedge] (1) to[bend left] (A) to[bend left] (2) to[bend left] (1);
\draw[hyperedge] (3) to[bend left] (B) to[bend left] (4) to[bend left] (3);
\draw[hyperedge] (5) to[bend left] (C) to[bend left] (6) to[bend left] (5);
\end{tikzpicture}
\end{center}
    \caption{$A',B',C'$ and $v_1,v_2,v_3,v_4,v_5,v_6$ from the proof of Theorem~\ref{F5stability}.}
    \label{fig:ABC}
\end{figure}
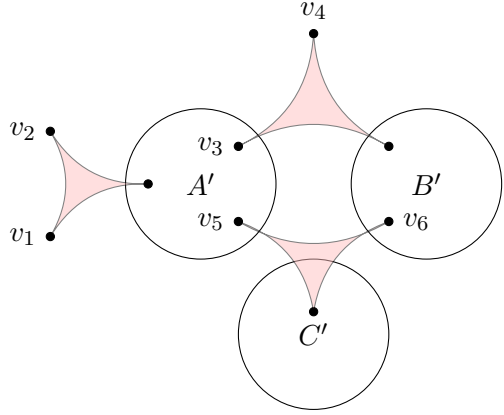

\begin{claim}
\label{classsizesofblobs}
We have 
\begin{align*}
    \left\vert a-\frac{1}{3}\right\vert\leq \frac{\varepsilon}{10} ,\quad   \quad \left\vert b-\frac{1}{3}\right\vert\leq \frac{\varepsilon}{10} ,\quad \quad \left\vert c'-\frac{1}{3}\right\vert\leq \frac{\varepsilon}{10} ,\quad \quad d\leq \frac{\varepsilon}{10}. 
\end{align*}
\end{claim}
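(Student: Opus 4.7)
The plan is to bound $\textup{co}_2(G)$ above by an explicit function of $(a,b,c',d)$ using structural consequences of $F_5$-freeness and the greedy maximality of the pairs $(v_1,v_2),(v_3,v_4),(v_5,v_6)$, and then show that this function is strictly maximised at $(1/3,1/3,1/3,0)$. The greedy choices give the codegree bounds $d(x,y)\le 2,4,6$ for pairs inside $A,B,C'$ respectively; $d(x,y)\le d(v_5,v_6)\le c'n+O(1)$ for $(x,y)\in A\times B$; $d(x,y)\le d(v_3,v_4)\le bn+O(1)$ whenever $x\in A$ and $y\in V(G)\setminus(A'\cup\{v_1,v_2\})$, which covers almost all of $A\times C'$ and $A\times D$; and the trivial $d(x,y)\le an+O(1)$ otherwise. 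Moreover, $F_5$-freeness rules out every edge with three vertices in one of $A,B,C'$ and forces two-in-one-class edges to have their third vertex in an $O(1)$-size subset of $E=\{v_1,\ldots,v_6\}$.

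Summing the $A\times B$ codegree bound and its two cyclic analogues yields the key identities
\begin{equation*}
e(A,B,C')+e(A,B,D)\le abc'\,n^3+O(n^2),
\end{equation*}
together with the corresponding bounds for $e(A,B,C')+e(A,C',D)$ and $e(A,B,C')+e(B,C',D)$; in particular $e(A,B,D)+e(A,C',D)+e(B,C',D)\le 3\bigl(abc'\,n^3-e(A,B,C')\bigr)+O(n^2)$. Writing $\alpha:=e(A,B,C')/n^3$, the contribution of a good edge to $\textup{co}_2$ is at most $w_G(e)\le(1-d)n+O(1)$, a one-in-$D$ edge contributes at most $3an+O(1)$, and the edges with two or three vertices in $D$ (and the $O(n^2)$ two-in-one-class edges) together contribute at most $O(d^2)n^4+O(n^3)$. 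Combining these estimates gives
\begin{equation*}
\textup{co}_2(G)\le \alpha(1-d)\,n^4+9a(abc'-\alpha)\,n^4+O(d^2)\,n^4+O(n^3).
\end{equation*}
The coefficient $(1-d)-9a$ of $\alpha$ is negative because $a\ge b\ge c'$ forces $3a\ge 1-d$, so the bound is largest when $\alpha$ is as small as possible.

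To bound $\alpha$ from below I would use the Frankl--F\"uredi edge bound $|E(G)|\le n^3/27+o(n^3)$: summing the three cyclic identities gives $3abc'\,n^3-2\alpha\,n^3+(\text{two/three-in-}D\text{ edges})\le n^3/27+o(n^3)$, hence $\alpha\ge\tfrac{1}{2}(3abc'-1/27)+o(1)$ whenever $abc'\ge 1/81+o(1)$. Substituting this lower bound into the previous inequality produces an upper bound $\textup{co}_2(G)\le F(a,b,c',d)\,n^4+o(n^4)$ for an explicit function $F$ satisfying $F(1/3,1/3,1/3,0)=1/27$ and, in a neighbourhood of this point,
\begin{equation*}
F(a,b,c',d)\le \tfrac{1}{27}-c_1 d-c_2\bigl((a-1/3)^2+(b-1/3)^2+(c'-1/3)^2\bigr)
\end{equation*}
for positive absolute constants $c_1,c_2$. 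Combined with the hypothesis $\textup{co}_2(G)\ge (1-\delta)n^4/27$ this forces $d=O(\delta)$ and $|a-1/3|,|b-1/3|,|c'-1/3|=O(\sqrt{\delta})$; choosing $\delta$ small enough depending on $\varepsilon$ delivers the claimed bound $\max(|a-1/3|,|b-1/3|,|c'-1/3|,d)\le\varepsilon/10$. I expect the main obstacle to be the careful Taylor expansion of $F$ near the balanced point that produces the linear-in-$d$ and quadratic-in-$(a-1/3,b-1/3,c'-1/3)$ gaps, since this requires tracking every error term in the codegree estimates and in the edge-count substitution accurately.
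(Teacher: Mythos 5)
Your approach is genuinely different from the paper's, but it has a gap that I do not think can be closed as written.

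The paper bounds $\textup{co}_2(G)=\sum_{\{x,y\}}d(x,y)^2$ directly, pair by pair: pairs in $A\times B$ have codegree $\le c'n+O(1)$, pairs in $A\times(C'\cup D)$ have codegree $\le bn+O(1)$, the remaining pairs have the trivial $\le an$, and pairs inside $A$, $B$, $C'$ have codegree $O(1)$. Summing gives $\textup{co}_2(G)\le f(a,b,c',d)\,n^4+O(n^3)$ for the explicit polynomial $f(w,x,y,z)=wxy^2+w(y+z)x^2+\bigl(\tfrac{(x+y+z)^2-x^2-y^2}{2}\bigr)w^2$, which is checked to have a unique maximum $1/27$ on the simplex at $(1/3,1/3,1/3,0)$. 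That uniqueness is what makes the compactness argument with $R\setminus U$ go through.

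Your route instead writes $\textup{co}_2(G)=\sum_e w_G(e)$, splits by edge type, bounds the number of one-in-$D$ edges via the cyclic identities, and substitutes the Frankl--F\"uredi lower bound $\alpha\ge\tfrac12(3abc'-1/27)+o(1)$ into $\textup{co}_2(G)/n^4\le[(1-d)-9a]\,\alpha+9a^2bc'+O(d^2)+o(1)$. The resulting function $F$ does \emph{not} attain its maximum over the simplex at the balanced point; for example, at $(a,b,c',d)=(0.6,0.3,0.1,0)$ (which obeys your constraints $a\ge b\ge c'$, $a+b+c'+d=1$) one gets $abc'=0.018$, the lower bound on $\alpha$ is $\tfrac12(0.054-1/27)\approx0.0085$, the coefficient of $\alpha$ is $1-5.4=-4.4$, and hence
\begin{align*}
F(0.6,0.3,0.1,0)\approx(-4.4)(0.0085)+9(0.36)(0.03)\approx-0.0374+0.0972\approx0.0598>\tfrac{1}{27}\approx0.037.
\end{align*}
So your upper bound on $\textup{co}_2(G)/n^4$ exceeds $1/27$ at this $(a,b,c',d)$, and the hypothesis $\textup{co}_2(G)\ge(1-\delta)n^4/27$ produces no contradiction there. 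The culprit is that the one-in-$D$ contribution is being majorised by $9a(abc'-\alpha)n^4$ and then $\alpha$ is replaced by a \emph{lower} bound: when $d$ is tiny this term should really be $O(d)n^4$, but $abc'-\alpha$ can stay bounded away from $0$, so the overestimate blows up. Your local quadratic expansion of $F$ near $(1/3,1/3,1/3,0)$ does not help, because the contradiction must be derived for \emph{all} $(a,b,c',d)\in R\setminus U$, not just those already known to be close to the balanced point. In contrast, the paper's $f$ gives $f(0.6,0.3,0.1,0)\approx0.018<1/27$, so its global uniqueness of the maximiser is exactly the property your $F$ lacks.
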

\begin{proof}
Assume, toward a contradiction, that the claim is false.
The only triples with two vertices from either $A$, $B$ or $C'$ which can be edges are those with one vertex in $E$. Therefore, we get
\begin{align*}
   d(x,y)\leq 6 \text{ for } x,y\in A, \quad
   d(x,y)\leq 6\text{ for } x,y\in B \quad \text{and} \quad 
   d(x,y)\leq 6 \quad \text{for} \quad x,y\in C'.
\end{align*}
By the choices of the vertices in $E$, we further have 
\begin{equation*}
  d(x,y)\leq c'n+6 \quad \text{for} \quad x\in A,\  y\in B, \quad d(x,y)\leq bn+6 \quad \text{for} \quad x\in A,\ y\in C' \cup D \setminus E
\end{equation*}
\begin{equation*}
    \text{and} \quad d(x,y)\leq an \quad \text{for} \quad x,y \in V(G).
\end{equation*}
Now,
\begin{align}
\nonumber
\label{opti}
 \frac{1}{27}(1-\delta)n^4 &\leq \textup{co}_2(G) \leq ab(c'n+6)^2n^2+an(c'n+dn+6)(bn+6)^2 + |E|n(an)^2\\
&+\left( \binom{(b+c'+d)n}{2}-\binom{bn}{2}-\binom{c'n}{2} \right)(an)^2 \nonumber\\
&\leq \left( abc'^2+a(c'+d)b^2 + \left( \frac{(b+c'+d)^2}{2}-\frac{b^2}{2}-\frac{c'^2}{2} \right)a^2 \right)n^4+100n^3 \nonumber \\
&= f(a,b,c',d)n^4 +100n^3, 
\end{align}
where $f(w,x,y,z)=wxy^2+w(y+z)x^2 +\left( \frac{(x+y+z)^2}{2}-\frac{x^2}{2}-\frac{y^2}{2} \right)w^2$. Define
\begin{align*}
    R=\{(w,x,y,z) \ : \ w,x,y,z \geq 0, \ w+x+y+z=1 \} \subset \mathbb{R}^4.
\end{align*}
The polynomial $f$ has a maximum on $R$ of $1/27$ attained at unique point $w,x,y=1/3, \ z=0$.
This can be checked with a tedious calculation or using a computer, we omit the details. Define the open ball
\begin{align*}
    U:=\left\{(w,x,y,z) \ : \ \left\vert w-\frac{1}{3}\right\vert < \frac{\varepsilon}{10} ,\quad   \left\vert x-\frac{1}{3}\right\vert <\frac{\varepsilon}{10} ,\quad  \left\vert y-\frac{1}{3}\right\vert< \frac{\varepsilon}{10} ,\quad  z < \frac{\varepsilon}{10} \right\}.
\end{align*}
The regions $R$ and  $R\setminus U$ are closed and bounded, thus also compact on $\mathbb{R}^4$. Therefore, the polynomial $f$ attains a maximum on $R\setminus U$. Set 
\begin{align*}
    \delta:= \min \left\{\varepsilon,\frac{1}{27}- \max_{(w,x,y,z)\in R \setminus U} f(w,x,y,z) \right\},
\end{align*}
and note that $\delta=\delta(\varepsilon)>0$, since the unique maximum of $f$ on $R$ is also in $U$. The point $(a,b,c',d)\in R\setminus U$ by assumption and therefore $f(a,b,c',d)\leq 1/27-\delta$. Combining this observation with inequality \eqref{opti}, we get
\begin{align*}
    \frac{1}{27}(1-\delta)n^4 \leq  f(a,b,c',d)n^4 +100n^3 \leq \frac{1}{27}n^4-\delta n^4 +100n^3 < \frac{1}{27}(1-\delta)n^4,
\end{align*}
for $n$ large enough. This is a contradiction and thus Claim~\ref{classsizesofblobs} holds.
\end{proof}

Denote $H=G[A\cup B \cup C']$. Then, 
\begin{align*}
\textup{co}_2(H)\geq \textup{co}_2(G)-\frac{6}{10}\varepsilon n^4 \geq (1-\delta)\frac{1}{27}n^4 -\frac{6}{10}\varepsilon n^4 \geq \frac{1}{27}n^4-\varepsilon n^4, \end{align*}
because the number of edges incident to $D$ is at most $(\varepsilon/10) n^3$ by Claim~\ref{classsizesofblobs} and the codegree squared sum of a $3$-graph decreases by at most $6n$ when one edge is removed. The $3$-graph $H$ is $3$-partite. Thus, for each edge $e\in E(H)$, $w_{H}(e)\leq n$. By double-counting, we get a lower bound on the size of $E(H)$,
\begin{align*}
     \frac{1}{27}n^4-\varepsilon n^4 \leq \textup{co}_2(H) = \sum_{e\in E(H)} w_{H}(e)\leq |E(H)|n.
\end{align*}
Consider the vertex partition $V(G)=A \cup B \cup C$. For the number of cross-edges, we have 
\begin{align*}
    e(A,B,C) \geq e(A,B,C')=|E(H)|\geq \frac{1}{27}n^3-\varepsilon n^3 
\end{align*}
and thus the partition $V(G)=A \cup B \cup C$ satisfies \eqref{stabcondition2}.
Since there is no edge in $G$ with two vertices in one of the classes $A,B$ or $C'$ and because there are at most $(\varepsilon/10) n^3+6n^2$ edges incident to $D\cup E$ by Claim~\ref{classsizesofblobs}, the partition $V(G)=A\cup B \cup C$ also satisfies 
\begin{align*}
    &  e(A)+e(B)+e(C)+e(A,B)+e(B,C)+e(A,C) \\
    &=e(C)+e(B,C)+e(A,C)\leq \frac{\varepsilon}{10} n^3+6n^2 \leq \varepsilon n^3.
\end{align*}
Hence, \eqref{stabcondition1} holds, completing the proof of Theorem~\ref{F5stability}.  
\end{proof}
Now, we will prove the exact result for $F_5$-free $3$-graphs under an additional universal minimum-degree-type assumption. Afterwards we will deal with the additional assumption.

\begin{theo}
\label{F5mindegasumption}
There exists $n_0\in \mathbb{N}$ such that for all $n\geq n_0$ the following holds. If $G$ is an $F_5$-free $3$-graph satisfying  
\begin{align*}
\textup{co}_2(G)\geq \textup{co}_2(S_n)
\end{align*}
and 
\begin{align}
\label{minqassumption2}
    q(x):= \sum_{y\in V,y\neq x} d(x,y)^2 + 2\sum_{vw\in E(L(x))}d(x,y) \geq \frac{4}{27}n^3-10n^2=:d(n)
\end{align}
for all $x\in V(G)$, then $G \cong S_n$.
\end{theo}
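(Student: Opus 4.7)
The plan is to combine the asymptotic structural result of Theorem~\ref{F5stability} with the universal pointwise lower bound on $q(x)$ to first recover the tripartite structure of $G$ and then pin down its exact parameters. First, I apply Theorem~\ref{F5stability} with a small parameter $\varepsilon > 0$ (chosen at the end of the argument) to obtain a partition $V(G) = A \cup B \cup C$ satisfying (\ref{stabcondition1}) and (\ref{stabcondition2}). A standard refinement that reassigns each vertex to the class in which it participates in the most cross-edges guarantees that the resulting partition $V_1 \cup V_2 \cup V_3$ has class sizes $|V_i| = (1/3 + o(1))n$ and that the number of intra-class edges is at most $O(\varepsilon n^3)$.

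The next step is the key identity
\begin{align*}
\sum_{x \in V(G)} q(x) = 4\, \textup{co}_2(G),
\end{align*}
obtained from two applications of double counting: the first term of $q(x)$ sums to $\sum_x \sum_{y \neq x} d(x,y)^2 = 2\,\textup{co}_2(G)$, while the second sums to $2\sum_{e \in E(G)} w(e) = 2\,\textup{co}_2(G)$. Combined with the hypothesis $\textup{co}_2(G) \geq \textup{co}_2(S_n) = (1+o(1))n^4/27$, the average of $q(x)$ over the vertices equals $(4+o(1))n^3/27$, which coincides up to lower order with the lower bound $q(x) \geq 4n^3/27 - 10n^2$. Hence \emph{every} vertex $x$ is tight in this inequality, so the local structure at $x$ must essentially match the extremal one.

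The heart of the argument is a local upper bound on $q(x)$ in terms of defects. Fix $x \in V_1$ and decompose its link graph $L(x)$ according to the tripartite partition. A good link edge $\{b,c\}$ with $b \in V_2, c \in V_3$ contributes at most $|V_1|$ to the second sum; any bad link edge (two endpoints in the same class or in the class containing $x$) can contribute far less, and the $F_5$-free hypothesis severely restricts the associated codegrees. Specifically, if $d(y,z) \geq 2$ and $u, u' \in N(y,z)$ are distinct, then $F_5$-freeness forces $N(u,u') \subseteq \{y,z\}$, so $d(u,u') \leq 2$. A careful accounting shows that each local defect at $x$---an intra-class edge containing $x$, or a missing cross-triple $\{x,y,z\}$ with $y \in V_2,\, z \in V_3$---reduces $q(x)$ by $\Omega(n)$. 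The hypothesis $q(x) \geq 4n^3/27 - 10n^2$ therefore permits only $O(n)$ defects per vertex.

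To upgrade from ``few defects per vertex'' to no defects at all, I run a clean-up argument: if any bad edge or missing good edge remained, a careful swap or redistribution of an appropriate vertex would contradict either the uniform $q(x)$ lower bound or $\textup{co}_2(G) \geq \textup{co}_2(S_n)$. Once $G$ is exactly complete tripartite on $V_1, V_2, V_3$, a direct computation yields $\textup{co}_2(G) = n\, |V_1|\,|V_2|\,|V_3|$, which is strictly maximized by the balanced sizes defining $S_n$, so $\textup{co}_2(G) \geq \textup{co}_2(S_n)$ forces $G \cong S_n$. I expect the main obstacle to be the local/global quantification in the last two paragraphs: verifying that each kind of defect really does drop $q(x)$ by $\Omega(n)$ without any hidden compensations from $F_5$-permitted configurations, and then exploiting $F_5$-freeness and the global extremality to eliminate the $O(n)$ residual defects allowed by the $q(x)$ slack.
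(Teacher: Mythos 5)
Your overall architecture matches the paper's: apply Theorem~\ref{F5stability}, use the $q(x)$ lower bound together with $F_5$-freeness to expel all intra-class and misplaced edges, conclude $G$ is exactly tripartite, and finish via $\textup{co}_2(G) = n\,|V_1||V_2||V_3| \le \textup{co}_2(S_n)$. Your identity $\sum_x q(x) = 4\,\textup{co}_2(G)$ is correct and is a pleasant observation, but the paper never needs an averaging step; it only uses the pointwise hypothesis $q(x)\ge d(n)$ applied to specific ``bad'' vertices. The inference you draw from the average matching the minimum (``hence every vertex is tight, so the local structure at $x$ must match the extremal one'') does not follow on its own: tightness of a scalar inequality carries no structural information without the $F_5$-free mechanism, and the average can match the minimum while a few vertices have much larger $q$.

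The genuine gap is in the ``heart of the argument.'' Your framing---each local defect (an intra-class edge at $x$, or a missing cross-triple at $x$) decreases $q(x)$ by $\Omega(n)$, so only $O(n)$ defects per vertex are allowed---does not reflect what actually happens, and a careful accounting along those lines would not close. In the paper the mechanism is a \emph{dichotomy}, not defect counting: if $x$ lies in an edge with another vertex of the same class, then (because two vertices with nearly complete cross-links must share a cross-pair, which would complete an $F_5$) one of those two vertices must be missing a constant fraction of its cross-link, say $|L_{B,C}(x_1)| \le \tfrac{2}{3}|B||C|$. That is not a marginal $\Omega(n)$ drop but a drop in $q(x_1)$ to below roughly $\tfrac{10}{81}n^3$, far below the threshold $\tfrac{4}{27}n^3 - 10n^2$, which gives an immediate contradiction. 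This is organized in the paper via junk sets $J_A,J_B,J_C$ of atypical vertices and a cascade of cleaning claims (no edge within $A\setminus J_A$, small link sizes for typical vertices, no edge hitting a class twice with one typical endpoint, and finally no intra-class pair in any edge at all). Your deferred ``clean-up argument''---eliminating the $O(n)$ residual defects---is precisely where all the work lives, and the residual-defect picture you set up is not the one that makes the clean-up go through; you need the all-or-nothing structure of $F_5$-freeness (your observation that $d(u,u')\le 2$ whenever $u,u'\in N(y,z)$ is exactly this phenomenon, but you do not develop it into the cross-link dichotomy that drives the proof).

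Two smaller remarks. Your reassignment step (move each vertex to its majority cross-class) is essentially the paper's choice of a partition maximizing $e(A,B,C)$, and is used in the same way (Claim~\ref{cleaning3} exploits local optimality). And your final step is correct as stated: once $G$ is complete tripartite, $\textup{co}_2(G) = n\,|V_1||V_2||V_3|$, and $\textup{co}_2(G)\ge \textup{co}_2(S_n)$ forces balanced parts, hence $G\cong S_n$.
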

\begin{proof}
Let $G$ be an $F_5$-free $3$-graph on $n\geq n_0$ vertices satisfying $\textup{co}_2(G)\geq \textup{co}_2(S_n)$.
Choose $\varepsilon=10^{-20}$. Apply Theorem~\ref{F5stability} to $G$ and get a partition $A\cup B \cup C$ of the vertex set such that $e(A,B,C)\geq \frac{1}{27}n^3-\varepsilon n^3$. Among all such partitions we choose one which maximizes $e(A,B,C)$. We start by making an observation about the class sizes.
\begin{claim} We have
\label{classsizes1}
\begin{align*}\frac{n}{3} - \varepsilon^{1/4} n \leq |A|,|B|,|C| \leq \frac{n}{3} + \varepsilon^{1/4} n.
\end{align*}
\end{claim}
\begin{proof}
Without loss of generality, let $|A|\leq |B|\leq |C|$. Assume for contradiction that Claim~\ref{classsizes1} is false, so $|A|< n/3 - \frac{\varepsilon^{1/4}}{2} n$. We have 
\begin{align}
    e(A,B,C) &\leq |A||B||C|\leq |A| \left(\frac{n-|A|}{2}\right)^2 =
    (n-(n-|A|)) \left(\frac{n-|A|}{2}\right)^2 \label{co2setsizes} \nonumber \\ 
    \nonumber &\leq
    \left( \frac{n}{3}-\frac{\varepsilon^{1/4}}{2} n\right)\left(\frac{\frac{2n}{3}+\frac{\varepsilon^{1/4}}{2} n}{2}\right)^2
    =     \left( \frac{n}{3}-\frac{\varepsilon^{1/4}}{2} n\right)\left(\frac{n}{3}+\frac{\varepsilon^{1/4}}{4}n\right)^2 \\
    &= \left( \frac{1}{27}-\frac{\varepsilon^{3/4}}{32}- \frac{\varepsilon^{1/2}}{16}  \right)n^3 < \frac{1}{27}n^3-\varepsilon n^3,
\end{align} 
where the second-to-last inequality holds, because the function $x \rightarrow (n-x)(x/2)^2$ is monotone decreasing for $x>2n/3$. Inequality \eqref{co2setsizes} is in contradiction with $e(A,B,C)\geq \frac{1}{27}n^3-\varepsilon n^3$. Hence, the claim is true.  
\end{proof}
Define junk sets $J_A,J_B,J_C$ to be the sets of vertices which are not typical. To be precise,  
\begin{align*}
J_A:= \{x\in A: \ |L_{B,C}(x)|\leq |B||C|-\sqrt{\varepsilon}n^2 \}, \\
J_B:= \{x\in B: \  |L_{A,C}(x)|\leq |A||C|-\sqrt{\varepsilon}n^2 \}, \\
J_C:= \{x\in C: \  |L_{A,B}(x)|\leq |A||B|-\sqrt{\varepsilon}n^2 \},
\end{align*}
where $|G| = |E(G)|$. 
We have $|J_A|,|J_B|,|J_C|\leq \sqrt{\varepsilon}n$ as otherwise $e(A,B,C)\leq |A||B||C|-\varepsilon n^3$.

\begin{claim} 
\label{cleaning1}There is no edge $xyz$ with $z\in V(G)$, $x,y\in A \setminus J_A$ or $x,y\in B \setminus J_B$ or $x,y\in C \setminus J_C$.
\end{claim}
\begin{proof}
Let $xyz$ be an edge with $z\in V(G)$, $x,y\in A \setminus J_A$. We have $|L_{B,C}(x)|,|L_{B,C}(y)|\geq |B||C|-\sqrt{\varepsilon}n^2$ and thus $E(L_{B,C}(x)) \cap E(L_{B,C}(y)) \neq \emptyset$. Let $\{b,c\}\in E(L_{B,C}(x)) \cap E(L_{B,C}(y))$ with $b,c\neq z$. Now, $\{x,y,z,b,c\}$ induces an $F_5$, a contradiction. The remaining parts of the statement follows similarly. 
\end{proof}

\begin{claim}
\label{cleaning2}
For $v\in V(G)$, we have 
\begin{align*}
    |L_A(v)|\leq \sqrt{\varepsilon}n^2,\quad \quad |L_B(v)|\leq \sqrt{\varepsilon}n^2, \quad \quad \text{and} \quad \quad |L_C(v)| \leq \sqrt{\varepsilon}n^2.
\end{align*}
Further, 

\begin{itemize}
    \item for $a\in A\setminus J_A$, we have $|L_{A,B}(a)|,|L_{A,C}(a)| \leq \sqrt{\varepsilon}n^2$, 
    \item for $b\in B\setminus J_B$, we have 
    $|L_{A,B}(b)|,|L_{B,C}(b)| \leq \sqrt{\varepsilon}n^2$, 
    \item for $c\in C\setminus J_C$, we have 
    $|L_{B,C}(c)|,|L_{A,C}(c)| \leq \sqrt{\varepsilon}n^2$.
\end{itemize}
\end{claim}
\begin{proof}
Let $v\in V(G)$. We have $|L_A(v)|\leq |J_A||A|\leq  \sqrt{\varepsilon}n^2$, because by Claim~\ref{cleaning1} every edge in the link graph $L_A(v)$ needs to be incident to a vertex in $J_A$. Similarly, we get $|L_B(v)|,|L_C(v)|\leq \sqrt{\varepsilon}n^2$. \\
Now, let $a\in A\setminus J_A$. We have $|L_{A,B}(a)| \leq |J_A||B|\leq  \sqrt{\varepsilon}n^2$, because by Claim~\ref{cleaning1} every edge in the link graph $L_{A,B}(a)$ needs to be incident to a vertex in $J_A$. Similarly, we get $|L_{A,C}(a)| \leq \sqrt{\varepsilon}n^2$. Further, the other two statements follow by the same reasoning.

\end{proof}

\begin{claim} 
\label{cleaning3} Let $xyz$ be an edge with 
\begin{align*}
    &x\in A\setminus J_A, \ y\in A, \ z\in (B\setminus J_B) \cup (C\setminus J_C) \text{ or } \\[0.5em]
    &x\in C\setminus J_C, \ y\in C, \ z\in (A\setminus J_A) \cup (B\setminus J_B) \text{ or } \\[0.5em]
    &x\in B\setminus J_B, \ y\in B, \ z\in (A\setminus J_A) \cup (C \setminus J_C). 
\end{align*}
Then, $|L_{A,B}(y)|,|L_{A,C}(y)|, |L_{B,C}(y)| \leq 2\sqrt{\varepsilon}n^2$.
\end{claim}
\begin{proof}
Let $xyz$ be an edge with $x\in (A\setminus J_A), y\in A, z\in (B\setminus J_B) \cup (C\setminus J_C)$. Without loss of generality, let $z\in B\setminus J_B$. Since $|L_{B,C}(x)|\geq |B||C|-\sqrt{\varepsilon}n^2$, we have $|L_{B,C}(y)|\leq 2\sqrt{\varepsilon}n^2$, as otherwise $E(L_{B,C}(x)) \cap E(L_{B,C}(y))\neq \emptyset$, allowing us to find an $F_5$. Also, since $|L_{A,C}(z)|\geq |A||C|-\sqrt{\varepsilon}n^2$, we have $|L_{A,C}(y)|\leq 2\sqrt{\varepsilon}n^2$ by the same reasoning.  Since we chose the partition $A\cup B \cup C$ such that $e(A,B,C)$ is maximized, we have $|L_{A,B}(y)|\leq |L_{B,C}(y)| \leq 2\sqrt{\varepsilon}n^2$ as otherwise moving $y$ to $C$ increases the number of those edges having one vertex in each set $A,B,C$. The other two statements follow by a similar argument. 
\end{proof}

\begin{claim}
\label{cleaning4}
There is no edge $xyz$  with 
\begin{align*}
    &x\in A\setminus J_A, \ y\in A, \ z\in (B\setminus J_B) \cup (C\setminus J_C) \text{ or } \\[0.5em]
    &x\in C\setminus J_C, \ y\in C, \ z\in (A\setminus J_A) \cup (B\setminus J_B) \text{ or } \\[0.5em]
    &x\in B\setminus J_B, \ y\in B, \ z\in (A\setminus J_A) \cup (C \setminus J_C). 
\end{align*}
\end{claim}
\begin{proof}
Let $xyz\in E(G)$ be an edge with $x\in (A\setminus J_A), y\in A, z\in (B\setminus J_B) \cup (C\setminus J_C)$.  By Claim~\ref{cleaning2}, $|L_A(y)|,|L_B(y)|,|L_C(y)|\leq \sqrt{\varepsilon}n^2$ and by Claim~\ref{cleaning3}, $|L_{A,B}(y)|,|L_{A,C}(y)|, |L_{B,C}(y)| \leq 2\sqrt{\varepsilon}n^2$. Therefore, $|L(y)|\leq 9 \sqrt{\varepsilon}n^2$. Our strategy for proving this claim is to upper bound $q(y)$ violating \eqref{minqassumption2}. We have 
\begin{align}
\label{mindegreecheck3}
    2\sum_{\{b,c\}\in E(L(y))}d(b,c) \leq 2\sum_{\{b,c\}\in E(L(y))}n \leq  18 \sqrt{\varepsilon}n^3\leq \frac{1}{100}n^3.
\end{align}
The number of vertices $v\in V(G)$ satisfying $d(y,v)\geq 9\varepsilon^{1/4}n$ is at most $2\varepsilon^{1/4} n$ as otherwise 
$|L(y)|>9 \sqrt{\varepsilon}n^2$.
Using this fact, we get 
\begin{align}
\label{mindegreecheck4}
    \sum_{v\in V(G)}d(y,v)^2 \leq 2\varepsilon^{1/4} n^3 + n (9\varepsilon^{1/4}n)^2\leq \frac{1}{100} n^3. 
\end{align}
Combining \eqref{mindegreecheck3} and \eqref{mindegreecheck4}, we upper bound
\begin{align*}
    q(y) &\leq  \sum_{v\in V(G)}d(y,v)^2 + 2\sum_{\{b,c\}\in E(L(y))}d(b,c) \leq  \frac{2}{100} n^3 < \frac{4}{27}n^3 - 10n^2 ,
\end{align*}
violating \eqref{minqassumption2}. Hence, there cannot exist an edge $xyz$ with $x\in (A\setminus J_A), y\in A, z\in (B\setminus J_B) \cup (C\setminus J_C)$. The remaining two statements of this claim follow by a similar argument.
\end{proof}

\begin{claim} 
\label{cleaning5}
The following three statements hold.
\begin{itemize}
    \item For $a\in A$ we have $|L_{A,B}(a)|,|L_{A,C}(a)|\leq 2\sqrt{\varepsilon}n^2$.
    \item For $b\in B$ we have $|L_{A,B}(b)|,|L_{B,C}(b)|\leq 2\sqrt{\varepsilon}n^2$.
    \item For $c\in C$ we have $|L_{A,C}(c)|,|L_{B,C}(c)|\leq 2\sqrt{\varepsilon}n^2$. 
\end{itemize}
\end{claim}
\begin{proof}
Let $a\in A$. We have $|L_{A,B}(a)|,|L_{A,C}(a)|\leq 2\sqrt{\varepsilon}n^2$, because by Claim~\ref{cleaning4} every edge in both link graphs need to have one vertex in a junk set. The other two statements follow by a similar argument. 
\end{proof}

\begin{claim} 
\label{cleaning6}
There is no edge $x_1x_2v$ with $v\in V(G)$ and $x_1,x_2\in A$ or $x_1,x_2\in B$ or $x_1,x_2\in C$. 
\end{claim}
\begin{proof}
Without loss of generality, let $x_1x_2v$ be an edge where $x_1,x_2\in A, \ v\in V(G)$. Then,  $|L_{B,C}(x_1)| \leq \frac{2}{3} |B||C|$ or $|L_{B,C}(x_2)| \leq \frac{2}{3} |B||C|$, as otherwise $E(L_{B,C}(x_1)) \cap E(L_{B,C}(x_2))\neq \emptyset$ and thus there are two vertices $b\in B,c\in C,b,c\neq v$ such that $\{x_1,x_2,v,b,c\}$ induces an $F_5$ in $G$. Without loss of generality let $|L_{B,C}(x_1)| \leq \frac{2}{3} |B||C|$. By Claim~\ref{cleaning2} and Claim~\ref{cleaning5}, we have
\begin{align*}
   |L_A(x_1)|,|L_B(x_1)|,|L_C(x_1)|\leq \sqrt{\varepsilon}n^2 \quad \text{and} \quad |L_{A,B}(x_1)|,|L_{A,C}(x_1)|\leq 2\sqrt{\varepsilon}n^2. 
\end{align*}
Our strategy for proving this claim is to upper bound $q(x_1)$ again violating  \eqref{minqassumption2}. For all $b\in B\setminus J_B,c\in C\setminus J_C$ we have $d(b,c)\leq |A|\leq n/3 + \varepsilon^{1/4}n$ by Claim~\ref{cleaning4} and Claim~\ref{classsizes1}. Thus, $d(b,c)\leq n/3 + \varepsilon^{1/4}n$ for all but at most $\sqrt{\varepsilon}n^2$ pairs $\{b,c\}$ with $b\in B, c\in C$. Thus,
combining this fact with Claim~\ref{cleaning2}, we have 
\begin{align}
\label{mindegreecheck1}
    &2\sum_{\{b,c\}\in E(L(x_1))}d(b,c) \leq 14\sqrt{\varepsilon}n^3 + 2 \sum_{\{b,c\}\in E(L_{B,C}(x_1))}d(b,c) \\ \leq&  16 \sqrt{\varepsilon}n^3 + 2 |L_{B,C}(x_1)| \left(\frac{n}{3} + 3\varepsilon^{1/4}n \right) \nonumber  \leq  7\varepsilon^{1/4}n^3 + \frac{4}{9}|B||C|n \leq  8\varepsilon^{1/4}n^3 + \frac{4}{81}n^3,
\end{align}
where we used Claim~\ref{classsizes1} in the last inequality. 
The number of vertices $y\in A$ satisfying $d(x_1,y)\geq \varepsilon^{1/4}n$ is at most $10\varepsilon^{1/4} n$ as otherwise one of the following three inequalities would hold
\begin{align*}
    |L_A(x_1)|>\sqrt{\varepsilon}n^2 \quad \text{or} \quad  |L_{A,B}(x_1)|, |L_{A,C}(x_1)|> 2\sqrt{\varepsilon}n^2,
\end{align*}
contradicting Claim~\ref{cleaning2} or Claim~\ref{cleaning5}. Further, the number of vertices $y\in B$ satisfying $d(x_1,y)\geq \frac{n}{3} + 3\varepsilon^{1/4}n$ is at most $4\varepsilon^{1/4} n$ as otherwise 
\begin{align*}
    |L_B(x_1)|>\sqrt{\varepsilon}n^2 \quad \text{or} \quad|L_{A,B}(x_1)| > 2\sqrt{\varepsilon}n^2,
\end{align*}
contradicting Claim~\ref{cleaning2} or Claim~\ref{cleaning5}. Similarly, we can conclude that the number of vertices $y\in C$ satisfying $d(x_1,y)\geq \frac{n}{3} + 3\varepsilon^{1/4}n$ is at most $4\varepsilon^{1/4} n$. Combining these three facts, we have 

\begin{align}
\label{mindegreecheck2}
    \sum_{y\in V(G)}d(x_1,y)^2 &\leq 18\varepsilon^{1/4} n^3 +|A| (\varepsilon^{1/4}n)^2+ (|B| + |C|) \left(\frac{n}{3} + 3\varepsilon^{1/4}n \right)^2 \nonumber \\
    &\leq 21\varepsilon^{1/4} n^3 + (|B| + |C|) \frac{n^2}{9} \leq \left(\frac{2}{27} + 23 \varepsilon^{1/4} \right)n^3,
\end{align}
where we used Claim~\ref{classsizes1} in the last inequality. Combining \eqref{mindegreecheck1} and \eqref{mindegreecheck2}, we can upper bound $q(x_1)$:
\begin{align*}
    q(x_1) &\leq  \sum_{y\in V(G)}d(x_1,y)^2 + 2\sum_{\{b,c\}\in E(L(x_1))}d(b,c) \leq \left(\frac{2}{27} + 23 \varepsilon^{1/4} \right)n^3 + \left( 8\varepsilon^{1/4} + \frac{4}{81} \right)n^3 \\
    &\leq \left(31 \varepsilon^{1/4} + \frac{10}{81} \right)n^3 < \frac{4}{27}n^3 - 10n^2 ,
\end{align*}
violating \eqref{minqassumption2}. Thus, there is no edge $x_1x_2v$ with $x_1,x_2\in A, v\in B$. The remaining part of this claim follows by a similar argument. 
\end{proof}

Now, by Claim~\ref{cleaning6} we have that $G$ only has edges with vertices in all three sets $A,B,C$. Thus,  
\begin{align}
\label{classsizes}
    \textup{co}_2(S_n) \leq \textup{co}_2(G)\leq |A||B||C|^2+|A||C||B|^2+ |B||C||A|^2=|A||B||C|n.
\end{align}
However, also $|A||B||C|n \leq \textup{co}_2(S_n)$.
Thus, equality must hold in \eqref{classsizes}. Equality only holds iff
\begin{align}
    |A|,|B|,|C|\in \left\{\left\lfloor \frac{n}{3} \right\rfloor,\left\lceil \frac{n}{3} \right\rceil \right\}
\end{align}
and all triples with one vertex from each set $A,B,C$ form edges in $G$.  
Thus $G\cong S_n$.

\end{proof}
We complete the proof of Theorem~\ref{F5exact} by inductively showing that the additional minimum degree type assumption \eqref{minqassumption2} is not more restrictive. The proof follows the idea of \cite[Theorem~1.7.]{BalCleLidmain}. We repeat the argument here for completeness.

\begin{proof}[Proof of Theorem~\ref{F5exact}]
Let $H$ be a $3$-uniform $F_{5}$-free hypergraph with codegree squared sum at least $\textup{co}_2(H)\geq \textup{co}_2(S_n)$. Set $d(n)=4/27n^3-10n^2$ and note that
\begin{align*}
    \textup{co}_2(S_n)-\textup{co}_2(S_{n-1})>d(n).
\end{align*} 
If all vertices $x\in V(G)$ satisfy \eqref{minqassumption2}, Theorem~\ref{F5mindegasumption} gives the result. Therefore, we can assume there exists a vertex $x\in V(G)$ not satisfying \eqref{minqassumption2}.
Remove $x$ with $q(x)<d(n)$ to get $G_{n-1}$ with
\begin{align*}
\textup{co}_2(G_{n-1})\geq \textup{co}_2(G_n)-q(x) \geq \textup{co}_2(G_n)-d(n) \geq \textup{co}_2(S_n)-d(n)\geq \textup{co}_2(S_{n-1})+1.
\end{align*}
Repeat this process as long as such a vertex exists. This gives us a sequence of hypergraphs $G_m$ on $m$ vertices with $\textup{co}_2(G_m)\geq \textup{co}_2(S_m)+n-m$. This process stops before we reach a hypergraph on $n_0=n^{1/4}$ vertices, because  $\textup{co}_2(G_{n_0})>n-n_0 > \binom{n_0}{2}(n_0-2)^2$ which is not possible.
Let $n'$ be the index of the hypergraph where this process stops. $G_{n'}$ satisfies $q(x)\geq d(n')$ for all $x\in V(G_{n'})$ and $\textup{co}_2(G_{n'})\geq \textup{co}_2(S_{n'})$ where the last inequality is strict if $n>n'$. Applying Theorem~\ref{F5mindegasumption} leads to a contradiction.

\end{proof}

\section{Loose Path and Cycle}
\label{sec:cycle}
Tur\'an problems for loose paths and loose cycles have been studied intensively. In the graph setting, the Erd\H{o}s-Gallai Theorem determines 
\begin{align}
\label{ErdosGallai}
    \textup{ex}(n,P_s^2)=(s-1)n
\end{align} when $s$ divides $n$. For uniformity $k\geq 4$, F\"{u}redi, Jiang and Seiver \cite{Furedipath} determined $\textup{ex}(n,P^k_s)$ exactly, for $n$ large enough. Kostochka, Mubayi and Verstra\"ete~\cite{KostochkCk} extended this result to the case $k=3$. \\
Cs\'ak\'any and Kahn~\cite{KahnC_3} determined $\textup{ex}(n,C_3^3)$; and for $s\geq3,k\geq 5$, F\"uredi and Jiang \cite{Furedicycle} determined the extremal function for $C_s^k$ and large enough $n$. Kostochka, Mubayi and Vestra\"ete~\cite{KostochkCk} extended this result for $k=3,4$. 
In this section we will determine the codegree squared ext\-re\-mal number of $P_s^3$ and $C_s^3$ asymptotically. We will make use of the asymptotic version of the previously mentioned results for these $3$-graphs. 
\begin{theo}[see e.g.~\cite{KostochkCk}]
\label{Kostochkapath}
Let $s\geq 4$. Then,
\begin{align*}
    \textup{ex}(n,C_s^3)=\left \lfloor \frac{s-1}{2}\right \rfloor \frac{n^2}{2}(1+o(1)) \quad \quad \text{and} \quad \quad \textup{ex}(n,P_s^3)=\left \lfloor \frac{s-1}{2}\right \rfloor \frac{n^2}{2}(1+o(1)).
\end{align*}
\end{theo}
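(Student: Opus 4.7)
The plan is to verify both asymptotic bounds by giving a single extremal construction that witnesses the lower bound for both $P_s^3$ and $C_s^3$ simultaneously, and then handling the upper bound by combining a minimum-degree reduction with a stability-based embedding argument.

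Set $t=\lfloor (s-1)/2\rfloor$ and fix a ``hub'' set $S=\{h_1,\dots,h_t\}\subset [n]$. Let $H_{n,t}$ be the $3$-graph on $[n]$ whose edges are all triples meeting $S$. A direct count gives $|E(H_{n,t})|=\binom{n}{3}-\binom{n-t}{3}=\tfrac{t}{2}n^2(1+o(1))$. To see that $H_{n,t}$ is both $P_s^3$-free and $C_s^3$-free, note that in any copy of $P_s^3$ or $C_s^3$ every vertex has degree at most $2$; if every edge of such a copy intersected $S$, some hub would by pigeonhole lie in at least $\lceil s/t\rceil\geq 3$ of the $s$ edges (using $s\geq 2t+1$), a contradiction. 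This gives the lower bound $\textup{ex}(n,P_s^3),\textup{ex}(n,C_s^3)\geq \tfrac{t}{2}n^2(1+o(1))$.

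For the upper bound I focus on $P_s^3$. Suppose $G$ is a $P_s^3$-free $3$-graph with $|E(G)|\geq (t+\eta)\tfrac{n^2}{2}$ for some fixed $\eta>0$. First, iteratively delete any vertex whose current degree is below $\tfrac{\eta}{4}n$; since this removes at most $\tfrac{\eta}{4}n^2$ edges in total, the process terminates at a non-empty induced sub-hypergraph $G^{\ast}$ that inherits $P_s^3$-freeness, has minimum vertex-degree $\Omega(n)$, and retains at least $(t+\tfrac{\eta}{2})\tfrac{n^2}{2}$ edges. The heart of the argument is a stability statement: \emph{any $P_s^3$-free $3$-graph with at least $(t+o(1))\tfrac{n^2}{2}$ edges admits a set $T$ of size at most $t$ such that all but $o(n^2)$ edges intersect $T$}. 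Granting this, delete $T$ to get $G'$ on $n-t$ vertices with $o(n^2)$ edges, while $G^{\ast}$ still contains many ``wedges'' (pairs of edges sharing exactly one vertex) anchored at vertices of $T$. One then chains $t+1$ pairwise vertex-disjoint wedges through $T$, using short bridging edges drawn from $G'$ when needed, to realize an explicit copy of $P_s^3$ in $G$, contradicting $P_s^3$-freeness.

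The main obstacle will be the stability statement. My plan is a supersaturation-plus-sunflower argument: if no set $T$ of size $\leq t$ covers almost all edges, then the ``uncovered'' edges are numerous and well-spread, and one invokes either the Kruskal--Katona theorem on their shadow graph or the Frankl--F\"uredi delta-system method to locate many pairwise vertex-disjoint wedges whose cores can be chained into a loose path of length $s$. Making the bookkeeping quantitative is the technical crux. The cycle case is handled analogously: the lower bound $H_{n,t}$ is also $C_s^3$-free, and for the upper bound we prove the same stability statement with $P_s^3$ replaced by $C_s^3$, then chain $t$ wedges through $T$ together with bridging edges in $G'$ and add a final closing edge to realize $C_s^3$. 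For odd $s$, where simply reducing $\textup{ex}(n,C_s^3)$ to $\textup{ex}(n,P_{s-1}^3)$ would lose $\tfrac{n^2}{4}$, we rely on the direct wedge-chaining construction through $T$ rather than any such reduction.
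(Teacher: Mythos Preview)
The paper does not prove this theorem at all: it is quoted from the literature (Kostochka--Mubayi--Verstra\"ete) and used as a black box in the proof of Theorem~\ref{codcyclepath}. So there is no ``paper's own proof'' to compare against; I can only assess your attempt on its own merits.

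Your lower bound is fine. The hub construction $H_{n,t}$ is exactly the standard one, and the pigeonhole argument that a loose path or cycle of length $s\geq 2t+1$ cannot live inside it is correct.

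Your upper bound, however, is not a proof but a plan, and the plan has a real gap at its core. The entire argument hinges on the unproved ``stability statement'' that any $P_s^3$-free $3$-graph near the conjectured threshold has a hub set $T$ of size at most $t$ covering almost all edges. You acknowledge this yourself (``the main obstacle will be the stability statement''; ``making the bookkeeping quantitative is the technical crux''), and the suggested route---``supersaturation-plus-sunflower'', Kruskal--Katona on the shadow, or the delta-system method---is far too vague to count as an argument. In fact this stability statement is essentially equivalent in difficulty to the theorem itself; the actual proof in \cite{KostochkCk} proceeds quite differently, via a ``$d$-full'' reduction (their Lemmas~3.1 and~3.2, reproduced here as Lemmas~\ref{sashadfull1} and~\ref{sashadfull2}) together with a direct embedding lemma (Lemma~\ref{sashadfull3}), not via stability.

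There is also a secondary issue with the wedge-chaining step. You want to thread $t+1$ disjoint wedges through a set $T$ of size at most $t$, linking them with edges from $G'$; but $G'$ has only $o(n^2)$ edges, and you have not argued why the specific bridging edges you need are available, nor how the parity of $s$ (which governs whether you need one or two extra edges beyond the $2t$ coming from wedges at $T$) is handled. As written, this step is also a sketch rather than a proof.
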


The $n$-vertex $3$-graph containing all $3$-sets intersecting a fixed vertex set of size $\left\lfloor \frac{s-1}{2}\right\rfloor$ achieves the lower bound in Theorem~\ref{codcyclepath} and Theorem~\ref{Kostochkapath}.


In order to prove the upper bound in Theorem~\ref{codcyclepath} we will also use some of the lemmas Kostochka, Mubayi and Vestra\"ete needed to prove their result. Let $H$ be a $3$-uniform hypergraph on $n$ vertices. The hypergraph $H$ is $d$\textit{-full} if for every pair $x,y$ with $d_H(x,y)>0$ we have $d_H(x,y)>d$.
Denote by $\partial H$ the \emph{shadow graph} of $H$, i.e., the graph on $V(H)$ where a pair $\{x,y\}$ forms an edge if $d_H(x,y)>0$. 
\begin{lemma}[Lemma 3.1.~in \cite{KostochkCk}]
\label{sashadfull1}
For $d\geq 1$, every $n$-vertex $3$-uniform hypergraph $H$ has a $(d+1)$-full subhypergraph $F$ with 
$$ |F|\geq |H|-d|\partial H|.$$
\end{lemma}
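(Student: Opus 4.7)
The plan is to construct $F$ by greedily deleting edges through low-codegree pairs. Concretely, I would set $F_0 := H$ and iterate: at step $i$, if there exists a pair $\{x,y\} \in \binom{V(H)}{2}$ with $0 < d_{F_{i-1}}(x,y) \leq d$, choose such a pair and let $F_i$ be obtained from $F_{i-1}$ by removing every edge of $F_{i-1}$ that contains $\{x,y\}$; otherwise stop and set $F := F_{i-1}$. Since $|F_i| < |F_{i-1}|$ at every step and $H$ is finite, the process terminates.

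Once it terminates, for every pair $\{x,y\}$ we have either $d_F(x,y) = 0$ or $d_F(x,y) \geq d+1$, which is exactly the $(d+1)$-full condition. So the only thing left is to bound the number of deleted edges.

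For the bookkeeping, I would charge the deletions to pairs: at the step where pair $\{x,y\}$ is processed, at most $d$ edges are removed (by the choice of $\{x,y\}$), and afterwards $d_{F_j}(x,y)=0$ for all later $j$, so $\{x,y\}$ is never processed again. Moreover, only pairs with $d_H(x,y) > 0$ — i.e., pairs in $\partial H$ — can ever be processed, because the codegree of a pair in the current subhypergraph never exceeds its codegree in $H$. Summing over the at most $|\partial H|$ processed pairs gives
\[
|H| - |F| \;\leq\; d\,|\partial H|,
\]
which is the desired bound.

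The proof is essentially bookkeeping and has no real obstacle; the only subtle point to be careful about is that a pair whose codegree in $H$ exceeds $d$ may still become processable later (its codegree can drop below $d+1$ once other edges through it are deleted). This is handled automatically by the observation above that each pair is processed at most once and only pairs in $\partial H$ are ever processed, so the total deletion count is still at most $d \cdot |\partial H|$.
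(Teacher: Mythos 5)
Your greedy deletion argument is the standard proof of this cited lemma (the survey quotes it from Kostochka--Mubayi--Verstra\"ete without reproving it), and the bookkeeping is sound: each pair in $\partial H$ is processed at most once, costing at most $d$ edges, so $|H|-|F|\leq d\,|\partial H|$, and the subtle point about pairs whose codegree drops below the threshold only after other deletions is handled correctly. One caveat worth flagging: under the survey's restated definition of ``$d$-full'' (the strict inequality $d_H(x,y)>d$ for nonzero codegrees), the hypergraph you produce---in which every nonzero codegree is $\geq d+1$---is $d$-full rather than $(d+1)$-full; the lemma's arithmetic ($|F|\geq|H|-d|\partial H|$ for a $(d+1)$-full $F$) is only consistent with the non-strict convention ``$d_H(x,y)\geq d$'', which is what you implicitly and correctly use, so this is an off-by-one slip in the survey's paraphrase of the definition, not a gap in your proof.
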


\begin{lemma}[Lemma 3.2.~in \cite{KostochkCk}]
\label{sashadfull2}
Let $s\geq 3$ and let $H$ be a nonempty  $3s$-full $3$-uniform hyperpraph. Then $C_s^3,P_{s-1}^3\subseteq H$. 
\end{lemma}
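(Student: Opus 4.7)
The plan is to derive both containments by a greedy extension that exploits the pair-codegree guarantee of $3s$-fullness: whenever a pair $\{x,y\}$ lies in some edge of $H$, its link $\{z\colon xyz \in E(H)\}$ has size at least $3s+1$, giving a large reservoir from which to choose at every step.

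For $P_{s-1}^3$ I grow the path greedily. Fix any edge $e_1=\{y_0,y_1,y_2\}$ of $H$ and suppose we have constructed a loose path $e_1,\ldots,e_t$ with $t\leq s-2$, writing $e_i=\{y_{2i-2},y_{2i-1},y_{2i}\}$ and $V_t=\bigcup_{i\leq t}e_i$ of size $2t+1$. I extend through the tip $y_{2t}$ in two stages. First, the pair $\{y_{2t-1},y_{2t}\}$ lies in $e_t$, so $3s$-fullness produces at least $3s+1$ vertices $w$ with $\{y_{2t-1},y_{2t},w\}\in E(H)$; at most $2t-1\leq 2s-5$ of these lie in $V_t$, so I may pick $w\notin V_t$. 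Second, the pair $\{y_{2t},w\}$ now sits in the edge just found, so at least $3s+1$ vertices $z$ satisfy $\{y_{2t},w,z\}\in E(H)$, and at most $2t\leq 2s-4$ of them lie in $V_t\cup\{w\}$, so I may pick $z$ outside. Setting $e_{t+1}=\{y_{2t},w,z\}$, we have $e_{t+1}\cap e_t=\{y_{2t}\}$ and $e_{t+1}\cap e_i=\emptyset$ for $i\leq t-1$, extending the loose path; iterating until $t=s-1$ gives $P_{s-1}^3\subseteq H$. The same counts also go through for $t\leq s-1$, yielding $P_s^3\subseteq H$, which will be useful for the cycle.

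For $C_s^3$ I use the path $P_{s-1}^3=e_1,\ldots,e_{s-1}$ just produced and seek a closing edge $e_s=\{u,v,w\}$ with $u\in\{y_0,y_1\}$, $v\in\{y_{2s-3},y_{2s-2}\}$, and $w\notin V(P_{s-1}^3)$. Whenever one of these four candidate pairs $\{u,v\}$ lies in $\partial H$, its codegree is at least $3s+1$ and only $2s-3$ of the corresponding third vertices can fall inside $V(P_{s-1}^3)$, so a fresh $w$ exists and $e_1,\ldots,e_{s-1},e_s$ is a loose cycle. The remaining task is to guarantee that some closing pair is in $\partial H$.

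To force this I plan to run a P\'osa-style rotation at both ends of the path, exploiting the fact that the greedy construction left $\geq 3s+1$ alternative choices for the vertex inserted at each step. A rotation replaces the current endpoint by another vertex reached through a suitable chord of $H$ while keeping the rest of the path intact; since each vertex in the shadow has $\partial H$-degree at least $3s+1$, after a bounded number of rotations the endpoint set sweeps over at least $3s+1$ distinct vertices, and then pigeonhole (together with the $3s$-full lower bound on codegrees) forces one of the four closing pairs into $\partial H$. The main obstacle will be the bookkeeping of this rotation step: one has to verify that each swap preserves the loose-path structure (no repeated vertices and no unintended edge overlaps) rather than collapsing or shortening it. The slack between the codegree lower bound $3s+1$ and the $O(s)$ forbidden vertices at each step is what makes the rotation feasible, but nailing down the exact combinatorics of admissible rotations is where I expect the argument to be most delicate.
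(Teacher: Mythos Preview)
The paper does not prove this lemma at all; it merely quotes it from Kostochka--Mubayi--Verstra\"ete, so there is no in-paper argument to compare against. I can therefore only assess your proposal on its own merits.

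Your greedy construction of $P_{s-1}^3$ (and of $P_s^3$) is correct: the two-stage pivot through an auxiliary edge to ensure the new edge meets $e_t$ in exactly one vertex is the right move, and the arithmetic $3s+1>2t$ for $t\le s-1$ gives enough room at every step.

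The cycle argument, however, has a genuine gap. You reduce the closing step to the assertion that one of the four pairs $\{y_i,y_j\}$ with $i\in\{0,1\}$, $j\in\{2s-3,2s-2\}$ lies in $\partial H$, and then propose a ``P\'osa-style rotation'' to force this. Two problems: first, you never define what a rotation of a loose $3$-uniform path is, and the graph-theoretic P\'osa move does not transplant automatically --- a chord edge in $H$ typically shares two vertices with the path rather than one, so the rerouted object need not be a loose path. Second, even granting that rotations produce $\geq 3s+1$ distinct endpoints on each side, the pigeonhole sentence (``forces one of the four closing pairs into $\partial H$'') is unjustified: a large set of candidate left endpoints and a large set of candidate right endpoints gives no control over whether any particular \emph{pair} drawn from the two sets lies in the shadow. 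High $\partial H$-degree at a vertex does not say which specific vertices it is adjacent to. As written, the closing step is an outline of a hope, not a proof; you yourself flag the bookkeeping as ``most delicate'', and indeed that is exactly where the argument currently fails to go through.
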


\begin{lemma}[Lemma 5.1.~in \cite{KostochkCk}]
\label{sashadfull3}
Let $\varepsilon >0$ and $s\geq 4$. Let $H$ be a $3$-uniform hypergraph, and $F\subset E(\partial H)$ with $|F|>\varepsilon n^2$. Suppose that $d_H(f)\geq \lfloor \frac{s-1}{2} \rfloor+1$ for every $f\in F$ and, if $s$ is odd, then in addition, for every $f=xy\in F$ there is $e_f=xy\alpha \in H$ such that $\min\{d_H(x\alpha),d_H(y\alpha)\} \geq 2$ and $\max \{d_H(x\alpha),d_H(y\alpha)\}\geq 3s+1$. Then, for large enough $n$, $H$ contains $P_s^3$ and $C_s^3$.  
\end{lemma}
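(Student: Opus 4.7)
The plan is to construct both $P_s^3$ and $C_s^3$ inside $H$ by first locating a very long shadow path in a high-minimum-degree subgraph of the heavy-pair graph $F$, then extending it edge-by-edge to a loose hyperpath using the codegree hypothesis. First, I iteratively delete vertices of $F$-degree at most $\varepsilon n/4$ to obtain $F' \subseteq F$ with $\delta(F') \geq \varepsilon n/4$ and $|F'| \geq \varepsilon n^2/2$; since pairs in $F'$ still lie in $F$, they retain the codegree bound $d_H(f) \geq \lfloor (s-1)/2 \rfloor + 1$. The classical fact that a graph of minimum degree $\delta$ contains a path of length $\geq \delta$ then yields in $F'$ a shadow path $Q = v_0 v_1 \cdots v_L$ with $L \geq \varepsilon n/4 \gg s$ for $n$ sufficiently large.

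The heart of the argument is the embedding step: for some starting index $i_0$ along $Q$, set $e_j = \{v_{i_0+j-1}, v_{i_0+j}, u_j\}$ for $j=1,\ldots,s$ with distinct $u_j \in N_H(v_{i_0+j-1}v_{i_0+j}) \setminus V(Q)$. By Hall's theorem, a valid system of $u_j$'s exists unless some subset $J \subseteq \{1,\ldots,s\}$ has $\left|\bigcup_{j \in J} N_H(v_{i_0+j-1}v_{i_0+j}) \setminus V(Q) \right| < |J|$. For even $s$, where $d_H(f) \geq s/2$, the $L - s + 1 \gg s^3$ candidate starting positions allow a shift/pigeonhole argument: if Hall's condition failed for every $i_0$, a global count of (pair, extension) incidences would contradict $|F| > \varepsilon n^2$. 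For odd $s$ the baseline codegree $\approx s/2$ is too weak on its own, and one must invoke the supplementary hypothesis: pick $f = xy \in F$ with associated edge $e_f = xy\alpha \in H$ such that one of $d_H(x\alpha),d_H(y\alpha)$ is at least $3s+1$. Use $e_f$ as one edge of the loose hyperpath, and exploit that the large codegree pair has a neighborhood exceeding the total of $2s+1$ vertices used by the hyperpath; this furnishes conflict-free extensions for the remaining $s-1$ edges, with the weaker codegree bound $\geq 2$ on the other pair of $e_f$ providing the single extension needed to attach $e_f$ itself.

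The statement for $C_s^3$ follows by running the same argument with a shadow path two steps longer and closing up the endpoints, using that every endpoint of $Q$ has $F'$-degree at least $\varepsilon n/4$ and hence has many heavy incident pairs available to furnish a closing edge with a fresh third vertex. The main obstacle is the shift/pigeonhole step in the embedding: the local codegree bound $\lfloor (s-1)/2\rfloor+1$ is only about $s/2$ while the number of forbidden "used" vertices at any point can reach roughly $2s$, so a naive greedy cannot succeed; the argument crucially requires exploiting the \emph{global} hypothesis $|F| > \varepsilon n^2$ combined with the length $L \gg s^3$ of $Q$, and, in the odd case, the $3s+1$ codegree furnished by the extra hypothesis on $e_f$.
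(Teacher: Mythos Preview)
The paper does not prove this lemma at all: it is quoted verbatim as Lemma~5.1 of Kostochka, Mubayi and Verstra\"ete~\cite{KostochkCk} and used as a black box in the proof of Lemma~\ref{badedges}. So there is no ``paper's own proof'' to compare against; any assessment has to be on the proposal's internal merits.

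As a standalone argument, your sketch has the right architecture (pass to a subgraph $F'$ of large minimum degree, find a long shadow path, then lift to a loose hyperpath/cycle by choosing third vertices), but the embedding step is not actually carried out. For even $s$ you assert that if Hall's condition fails at every starting index then ``a global count of (pair, extension) incidences would contradict $|F|>\varepsilon n^2$''; this is the crux, and you have not said what quantity you are counting or why its two bounds conflict. Note that the codegree hypothesis gives each heavy pair only about $s/2$ extensions while the number of vertices already on $Q$ near any window is about $s$, so Hall's condition genuinely can fail at many consecutive windows without any evident global obstruction tied to $|F|$; you need a concrete mechanism here, not an appeal to pigeonhole. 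For odd $s$ the situation is worse: you propose to insert the special edge $e_f=xy\alpha$ into the hyperpath, but $f=xy$ is an arbitrary pair of $F$ and need not lie on your shadow path $Q$ at all, so it is unclear how $e_f$ is to be spliced in. The cycle case is asserted in one sentence and inherits both problems. In short, the outline is plausible but the two load-bearing steps (the even-$s$ counting contradiction and the odd-$s$ use of $e_f$) are not established; if you want a self-contained proof you should consult the original argument in~\cite{KostochkCk}.
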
 
Let $H$ be a $3$-uniform hypergraph. Recall that $w_H(e)=d_H(x,y)+d_H(y,z)+d_H(z,x)$ for $e=xyz\in E(H)$. We denote by $H_{2s}$ the subhypergraph of $H$ only consisting of edges $e$ satisfying $w_H(e)\geq 2n+2s$. Note that every edge $xy$ in the shadow graph $\partial H_{2s}$ has codegree $d_H(x,y)\geq 2s$ in $H$. 

\begin{lemma}
\label{badedges}
Let $s\geq4$. Let $H$ be a $3$-uniform hypergraph on $n$ vertices. If $H$ is $P_s^3$- or $C_s^3$-free, then $|E(H_{2s})|=o(n^2)$.   
\end{lemma}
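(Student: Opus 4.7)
The plan is to argue by contradiction: assume $|E(H_{2s})| \ge \varepsilon n^2$ for some fixed $\varepsilon > 0$ and arbitrarily large $n$, and then force a copy of $P_s^3$ or $C_s^3$ in $H$ by chaining Lemmas~\ref{sashadfull1}, \ref{sashadfull2}, and \ref{sashadfull3}.

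My first observation is a codegree floor on every pair of $\partial H_{2s}$. For any edge $e=xyz\in H_{2s}$, the defining inequality $w_H(e)\ge 2n+2s$ combined with the trivial bound $d_H(\cdot,\cdot)\le n-2$ forces each of $d_H(x,y)$, $d_H(y,z)$, $d_H(x,z)$ to be at least $2n+2s-2(n-2)=2s+4$. In particular, every $f\in \partial H_{2s}$ has $d_H(f)\ge 2s+4$.

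Next I will amplify $|\partial H_{2s}|$ relative to $|E(H_{2s})|$. Apply Lemma~\ref{sashadfull1} to $H_{2s}$ with $d=3s+2$ to obtain a $(3s+3)$-full subhypergraph $F\subseteq H_{2s}$ with $|F|\ge |E(H_{2s})|-(3s+2)|\partial H_{2s}|$. A $(3s+3)$-full $3$-graph is a fortiori $3s$-full, so Lemma~\ref{sashadfull2}, invoked with parameters $s$ and $s+1$, shows that a nonempty such $F$ would contain both $C_s^3$ and $P_s^3$, contradicting whichever of the two freeness assumptions $H$ satisfies. Hence $F$ is empty, giving
\[
|E(H_{2s})|\le (3s+2)\,|\partial H_{2s}|, \qquad \text{so} \qquad |\partial H_{2s}|\ge \frac{\varepsilon}{3s+2}\, n^2.
\]

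The last step is to apply Lemma~\ref{sashadfull3} with the choice $F:=\partial H_{2s}$ and $\varepsilon':=\varepsilon/(3s+2)$. The size condition holds by the previous step, and the bound $d_H(f)\ge \lfloor (s-1)/2\rfloor+1$ for every $f\in F$ follows from $d_H(f)\ge 2s+4$. For odd $s$, given $f=xy\in F$ I will take as witness any $e_f=xy\alpha\in H_{2s}$: the weight bound then yields $d_H(x,\alpha)+d_H(y,\alpha)\ge 2n+2s-d_H(x,y)\ge n+2s+2$, forcing the minimum of these two codegrees to be at least $(n+2s+2)-(n-2)=2s+4\ge 2$ and their maximum to be at least $(n+2s+2)/2\ge 3s+1$ for $n$ large. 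Lemma~\ref{sashadfull3} then exhibits $P_s^3,\,C_s^3\subseteq H$, contradicting the hypothesis. The only modest obstacle I anticipate is verifying the odd-$s$ witness condition in Lemma~\ref{sashadfull3}, which dissolves immediately from the defining weight bound of $H_{2s}$.
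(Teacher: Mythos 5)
Your proof is correct, and it takes a genuinely cleaner route than the paper's. The paper first establishes $|\partial H_{2s}|=o(n^2)$ by a three-way case analysis: if $H$ is $P_s^3$-free it invokes the Erd\H{o}s--Gallai bound for graph paths, if $H$ is $C_s^3$-free with $s$ even it uses the Bondy--Simonovits even-cycle theorem, and only for $C_s^3$-free with $s$ odd does it reach for Lemma~\ref{sashadfull3}; afterwards it applies Lemmas~\ref{sashadfull1} and~\ref{sashadfull2} to convert the shadow bound into a bound on $|E(H_{2s})|$. You instead run the full/shadow reduction first (via Lemmas~\ref{sashadfull1} and~\ref{sashadfull2}) to get $|E(H_{2s})|\le (3s+2)|\partial H_{2s}|$, then let Lemma~\ref{sashadfull3} do all the work in a single stroke for every parity and for both path- and cycle-freeness. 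The key observation making this legal is that Lemma~\ref{sashadfull3}'s conclusion already produces \emph{both} $P_s^3$ and $C_s^3$, and that its odd-$s$ side condition follows from the defining weight bound $w_H(e)\ge 2n+2s$ exactly as you compute (your $\min\ge 2s+4$ and $\max\ge (n+2s+2)/2\ge 3s+1$ checks are right). The trade-off is that the paper's case analysis yields sharper information ($|\partial H_{2s}|=O(n)$ or $O(n^{1+2/s})$ in two of the cases), which is stronger than what the lemma needs, whereas your argument is shorter and avoids Erd\H{o}s--Gallai and Bondy--Simonovits entirely at the cost of leaning on the more technical Lemma~\ref{sashadfull3} in all cases. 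One cosmetic remark: Lemma~\ref{sashadfull3} asks for $|F|>\varepsilon n^2$ with a strict inequality while you derive $|\partial H_{2s}|\ge \varepsilon n^2/(3s+2)$; simply take $\varepsilon'=\varepsilon/(2(3s+2))$ to be safe. Incidentally, your choice $d=3s+2$ in Lemma~\ref{sashadfull1}, giving a $3(s+1)$-full subhypergraph, matches what the paper must have intended there (the paper's displayed coefficient $3s$ is off by a constant from its stated fullness $3(s+1)$), and either choice suffices for the asymptotics.
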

\begin{proof}
We will first show that $|E(\partial H_{2s})|=o(n^2)$.

\medskip

\noindent
Case 1: $H$ is $P_s^3$-free.

The shadow graph $\partial H_{2s}$ does not contain a copy of $P_s^2$, because every edge of this path has codegree at least $2s$ in $H$ and thus one could find a $P^3_s$ in $H$. By the Erd\H{o}s-Gallai Theorem, see \eqref{ErdosGallai}, $|E(\partial H_{2s})|=O(n)=o(n^2).$

\medskip

\noindent
Case 2: $H$ is $C_s^3$-free and $s$ is even.

The shadow graph $\partial H_{2s}$ does not contain a copy of $C_s^2$, because every edge of this $s$-cycle has codegree at least $2s$ in $H$ and thus one could find a $C_s^3$ in $H$. Bondy and Simonovits' even cycle theorem~\cite{Bondyevencycle} states that an $n$-vertex graph with no copy of $C_s^2$ contains at most $O(n^{1+2/s})$ edges. Thus, $|\partial H_{2s}|=O(n^{1+2/s})=o(n^2)$. 
\medskip

\noindent
Case 3: $H$ is $C_s^3$-free and $s$ is odd.

Suppose $|\partial H_{2s}| > \varepsilon n^2$ for some $\varepsilon>0$. Apply Lemma~\ref{sashadfull3} on $H$ with $F=E(\partial H_{2s}) \subseteq E(\partial H)$. Note that the conditions of Lemma~\ref{sashadfull3} are satisfied: We have $d_H(f)\geq 2s\geq \lfloor \frac{s-1}{2}\rfloor +1$ for $f=xy\in F$. Since $f\in E(\partial H_{2s})$, there exists $e_f=xy\alpha\in E(H_{2s})$. So $w_H(e_f)\geq 2n+2s$ and in particular
\begin{align*}\min\{d_H(x\alpha),d_H(y\alpha)\} \geq w_H(e_f)-2n\geq 2s 
\end{align*}
and
\begin{align*}\max\{d_H(x\alpha),d_H(y\alpha)\} \geq \frac{d_H(x\alpha)+d_H(y\alpha)}{2} \geq \frac{w_H(e_f)-n}{2}\geq \frac{n+2s}{2}\geq 3s+1.
\end{align*}
We conclude that there is a copy of $C^3_s$ in $H$, a contradiction. Thus, $|\partial H_{2s}|=o(n^2).$

\medskip

\noindent
We obtained in all three cases $|\partial H_{2s}|=o(n^2)$. By Lemma~\ref{sashadfull1}, $H_{2s}$ has a $3(s+1)$-full subgraph $H'$  with 
$$ |E(H')|\geq |E(H_{2s})|-3s|E(\partial H_{2s})|.$$
By Lemma~\ref{sashadfull2}, $H'=\emptyset$ and thus $|E(H_{2s})|\leq 3s|E(\partial H_{2s})|=o(n^2)$.  

\end{proof}
\begin{proof}[Proof of Theorem~\ref{codcyclepath}]
 Let $H$ be a $P_s^3$- or $C_s^3$-free $3$-uniform hypergraph on $n$ vertices. By Lemma~\ref{badedges}, $|E(H_{2s})|=o(n^2)$. Therefore, we get
\begin{align*}\sum_{\{x,y\}\in \binom{[n]}{2}}d_H^2(x,y)&= \sum_{e \in E(H_{2s})}w_H(e)+\sum_{e  \in E(H) \setminus E(H_{2s}) }w_H(e) \leq o(n^3)+ (2n+2s)|E(H)|\\
&\leq \left\lceil \frac{s-1}{2}\right\rceil n^3(1+o(1)), \end{align*}
where in the last inequality we used Theorem~\ref{Kostochkapath}
\end{proof}

We raise the problem of determining for $P_s^3$ and $C_s^3$ exactly.
\begin{problem}
Let $s\geq 3$. Determine $\textup{exco}_2(n,P_s^3)$ and $\textup{exco}_2(n,C_s^3)$ as a function of $n$ and $s$.
\end{problem}
For $C_3^3$, we solve this problem.
\begin{theo}
Let $n\geq 6$. Then,
\begin{align*}
\textup{exco}_2(n,C_3^3)= \binom{n-1}{2}(2n-3).
\end{align*}
\end{theo}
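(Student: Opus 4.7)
The plan is to bound $\textup{co}_2(G) = \sum_{e \in E(G)} w(e)$, where $w(xyz) := d(x,y) + d(y,z) + d(x,z)$, by combining two ingredients. The first is the edge bound $|E(G)| \leq \binom{n-1}{2}$ for every $n$-vertex $C_3^3$-free 3-graph, which is the result of Cs\'ak\'any and Kahn mentioned earlier. The second and main ingredient is a pointwise bound $w(e) \leq 2n-3$ for every edge $e = xyz \in E(G)$. Together these yield $\textup{co}_2(G) \leq (2n-3)\binom{n-1}{2}$, and the matching lower bound is realized by the full star at any single vertex $v$ (the $3$-graph consisting of all triples containing $v$): this star is $C_3^3$-free (any three of its edges share the vertex $v$, so their three pairwise intersection points cannot be three distinct vertices as required by $C_3^3$), has $\binom{n-1}{2}$ edges, and every edge $vxy$ has $w(vxy) = (n-2)+(n-2)+1 = 2n-3$.

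To prove the weight bound, I would fix $e = xyz \in E(G)$ and set $A := N(x,y) \setminus \{z\}$, $B := N(y,z) \setminus \{x\}$, $C := N(x,z) \setminus \{y\}$, all inside $V(G) \setminus \{x,y,z\}$, where $N(p,q) = \{r : pqr \in E(G)\}$. The key observation is that if $(A,B,C)$ admits a system of distinct representatives $u \in A$, $v \in B$, $w \in C$, then the three edges $xyu$, $yzv$, $xzw$ form a copy of $C_3^3$ on the six distinct vertices $x,y,z,u,v,w$, contradicting $C_3^3$-freeness. So no such SDR exists, and Hall's marriage theorem forces at least one of the following failure modes: (i) one of $A,B,C$ is empty, i.e., some codegree equals $1$; (ii) two of the sets coincide as a common singleton, forcing the corresponding two codegrees to equal $2$; or (iii) $|A \cup B \cup C| \leq 2$, forcing every codegree to be at most $3$. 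Substituting into $w(e)$ gives $w(e) \leq 1 + 2(n-2) = 2n-3$ in case (i), $w(e) \leq 2 + 2 + (n-2) = n+2$ in case (ii), and $w(e) \leq 9$ in case (iii). All three bounds are at most $2n-3$ when $n \geq 6$, which is the stated hypothesis.

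I do not foresee a serious obstacle: the Hall argument converts $C_3^3$-freeness into the desired pointwise weight bound cleanly, after which summing over $E(G)$ and invoking Cs\'ak\'any--Kahn yields the theorem. The few details to verify carefully are that the six vertices $x,y,z,u,v,w$ obtained from an SDR are indeed distinct ($u,v,w$ lie outside $\{x,y,z\}$ by construction and are mutually distinct by the SDR property), and that the numerical thresholds in cases (ii) and (iii) together pin down the hypothesis $n \geq 6$ (the constraint $9 \leq 2n-3$ in case (iii) is the binding one). If one wanted uniqueness of the extremal configuration, it would follow from the same case split: saturating $w(e) = 2n-3$ at every edge forces case (i) at every edge, and combined with $|E(G)| = \binom{n-1}{2}$ this pins down the full star.
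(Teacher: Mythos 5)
Your proposal is correct and follows essentially the same route as the paper: combine the Cs\'ak\'any--Kahn edge bound with the pointwise bound $w(e)\leq 2n-3$, sum over edges, and match with the full star at one vertex. The only difference is that you make explicit, via Hall's theorem, the system-of-distinct-representatives step that the paper states in a single sentence (``so one can find distinct vertices $a,b,c$''), and in doing so you also pin down why the hypothesis $n\geq 6$ is exactly what is needed.
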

\begin{proof}
Let $H$ be a $C_3^3$-free $3$-graph on $n$ vertices.
Cs\'ak\'any and Kahn~\cite{KahnC_3} proved that $|E(H)|\leq \binom{n-1}{2}$.
Assume that there is an edge $e=xyz$ with $w(e)>2n-3$, so one can find distinct vertices $a,b,c$ with $a\in N(x,y), \ b\in N(x,z)$ and $c\in N(y,z)$. Now, $\{xay,ycz,zbx\}$ forms a $C_3^3$. Thus, we can assume that $w(e)\leq 2n-3$ for all edges. This allows us to conclude
\begin{align*}
\textup{co}_2(H)= \sum_{e\in E(H)}w(e) \leq |E(H)|(2n-3)\leq \binom{n-1}{2}(2n-3).
\end{align*}
This proves the upper bound. Now, for the lower bound, consider the 3-graph $H$ containing all $3$-sets intersecting in one fixed vertex. This $3$-graph is $C_3$-free and has codegree squared sum 
\begin{align*}
\textup{co}_2(H)= \binom{n-1}{2} 1^2 + (n-1)(n-2)^2 = \binom{n-1}{2}(2n-3).
\end{align*}
\end{proof}

\section{Matchings}
\label{sec:matching}
In this section we prove Theorem~\ref{codmatching}. Denote by $M_s^k$ the $k$-uniform matching of size $s$, i.e., the $k$-uniform hypergraph on $ks$ vertices with $s$ disjoint edges. For graphs, Erd\H{o}s and Gallai \cite{Erdosmatching1} proved that the extremal number for $M_s^2$ is 
\begin{align}
     \textup{ex}(n,M_s^2)=\max \left \{\binom{2s-1}{2}, (s-1)(n-s+1)+\binom{s-1}{2}\right \}. \label{graphmatching} 
\end{align}
Erd\H{o}s \cite{Erdosmatching2} also determined the extremal number in the hypergraph case when the number of vertices is sufficiently large.  

\begin{theo}[Erd\H{o}s \cite{Erdosmatching2}]
Let $k\geq 2,s\geq 1$ be integers. Then there exists a constant $c=c(s)$ such that for all $n\geq ck$
\begin{align*}
\textup{ex}(n,M_s^{k})= |E(G(n;k,s-1)|=\sum_{i=1}^{\min\{k,s-1\}} \binom{s-1}{i}\binom{n-s+1}{k-i},
\end{align*}
where $G(n;k,s-1)$ is the $k$-uniform hypergraph with vertices $x_1,\ldots,x_n$ and all edges containing at least one of the vertices $x_1,\ldots,x_{s-1}$.
\end{theo}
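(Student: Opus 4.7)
The lower bound is immediate: $G(n;k,s-1)$ is $M_s^k$-free, because any $s$ pairwise disjoint edges would require $s$ distinct representatives from the size-$(s-1)$ cover $\{x_1,\ldots,x_{s-1}\}$, impossible by pigeonhole. The displayed edge count follows by inclusion--exclusion applied to the $k$-subsets of $[n]$ that meet $[s-1]$.

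For the upper bound I induct on $s$, with the base case $s=1$ being vacuous. In the inductive step, let $H$ be an $M_s^k$-free hypergraph on $[n]$ with the maximum possible number of edges. If $\nu(H)\leq s-2$ then $H$ is $M_{s-1}^k$-free and the inductive hypothesis yields $|H|\leq |E(G(n;k,s-2))|<|E(G(n;k,s-1))|$. Otherwise $\nu(H)=s-1$; fix a maximum matching $M=\{e_1,\ldots,e_{s-1}\}$, set $T=V(M)$ so that $|T|=k(s-1)$, and observe that every edge of $H$ meets $T$. Partition $E(H)$ by $i:=|e\cap T|$: the edges with $i\geq 2$ contribute at most $\binom{k(s-1)}{2}\binom{n}{k-2}=O_{s,k}(n^{k-2})$, a lower-order term.

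The heart of the argument is bounding the edges with $|e\cap T|=1$, via the external links $L_x=\bigl\{f\in\binom{[n]\setminus T}{k-1}:\{x\}\cup f\in H\bigr\}$ for $x\in T$. The key observation is that within each $e_i\in M$ at most one vertex $x\in e_i$ can satisfy $|L_x|$ above a threshold of order $n^{k-2}$. Indeed, if two distinct $x,y\in e_i$ both had large external links, then for $n\geq c(s)\cdot k$ a simple disjointness/pigeonhole argument produces disjoint $f_x\in L_x$, $f_y\in L_y$ in $[n]\setminus T$, and then $(M\setminus\{e_i\})\cup\{\{x\}\cup f_x,\{y\}\cup f_y\}$ is a matching of size $s$ in $H$, contradicting $M_s^k$-freeness. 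Consequently at most $s-1$ ``apex'' vertices of $T$ (one per $e_i$) contribute up to $\binom{n-1}{k-1}$ each, while the remaining $(k-1)(s-1)$ vertices of $T$ contribute only $O(n^{k-2})$ each, giving $|H|\leq (s-1)\binom{n-1}{k-1}+O_{s,k}(n^{k-2})$, which matches the leading order of $|E(G(n;k,s-1))|$.

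The main obstacle is closing the $O(n^{k-2})$ gap to get the exact equality the theorem claims. To bridge it I would complement the above with the classical shifting trick: the compressions $s_{ij}$ preserve both $|H|$ and $M_s^k$-freeness, so we may assume $H$ is shifted. In a shifted family, any edge $e=\{a_1<\cdots<a_k\}$ dominates (componentwise) all $k$-subsets $\{b_1<\cdots<b_k\}$ with $b_i\leq a_i$; so if some edge had $a_1\geq s$ then a careful iterative construction using the large vertex count $n\geq c(s)\cdot k$ -- producing disjoint edges supported on well-separated blocks of consecutive integers and combining them with edges of $M$ via the two-vertex swap from the previous paragraph -- extracts $s$ pairwise disjoint edges of $H$, a contradiction. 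Hence every edge of the shifted $H$ meets $[s-1]$, giving $|H|\leq |E(G(n;k,s-1))|$ on the nose and closing the induction. The delicate step is this final block construction when $s$ is small compared to $k$, where the shift alone does not immediately supply enough disjoint translates; here one has to use the inductive hypothesis on the restriction of $H$ to $[n]\setminus[s-1]$ together with a greedy extension, which is where the required magnitude of the constant $c(s)$ is set.
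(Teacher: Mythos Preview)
The paper does not prove this statement: it is quoted as a theorem of Erd\H{o}s with a citation to \cite{Erdosmatching2}, and is used only as a black box (in fact, for Theorem~\ref{codmatching} the paper does not even invoke it --- the proof there uses only the graph matching result~\eqref{graphmatching}). So there is no ``paper's own proof'' to compare your attempt against.

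That said, a brief comment on your sketch. The first part --- bounding $|H|$ by $(s-1)\binom{n-1}{k-1}+O_{s,k}(n^{k-2})$ via a maximum matching $M$, the cover $T=V(M)$, and the observation that at most one vertex per matching edge can have a large external link --- is a clean and correct way to get the asymptotics. The second part, however, has a real gap. Your claimed lemma ``in a shifted $M_s^k$-free family on $[n]$ with $n$ large, every edge meets $[s-1]$'' is false as stated: for $s=k=2$ the triangle $\{12,13,23\}$ is shifted and intersecting, yet the edge $23$ misses $[1]$. What is true, and what Erd\H{o}s actually proves, requires combining shifting with the \emph{extremality} of $H$ (or at least with $|H|$ being large), not with the existence of a near-perfect link structure on $T$ --- and note that your matching $M$ and its cover $T$ need not survive the shifting process, so you cannot simply import the ``two-vertex swap'' from the unshifted analysis. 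The phrase ``a careful iterative construction \ldots\ extracts $s$ pairwise disjoint edges'' is precisely where the content of Erd\H{o}s's argument lives, and you have not supplied it.
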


We will use the extremal result for matchings in graphs to determine the codegree squared extremal number for the matching asymptotically. 

\begin{proof}[Proof of Theorem~\ref{codmatching}]
The lower bound is achieved by $G(n;3,s-1)$.
\begin{align*}
     \textup{co}_2(G(n;3,s-1)) &= \binom{s-1}{2} (n-2)^2 + (s-1)(n-s+1)(n-2)^2 + \binom{n-s+1}{2} (s-1)^2 \\ &= (s-1)n^3 (1+o(1)).
\end{align*}
For the upper bound, let $H$ be an $M_s^3$-free $3$-uniform hypergraph. We construct an auxiliary graph $G$ with the same vertex set $V(H)$. A pair $xy$ is an edge in $G$ iff  $d_H(x,y)\geq 3s$. $G$ is $M_s^2$-free, because if there is a matching of size $s$ in $G$, then it can be extended to a $3$-uniform matching of size $s$ in $H$. By \eqref{graphmatching} we have $|E(G)|\leq n(s-1) (1+o(1))$.
Now, we can upper bound the codegree squared sum:  
\begin{align*}
   \textup{co}_2(H)&= \sum_{xy\in E(G)}d^2_H(x,y) + \sum_{xy\notin E(G)}d^2_H(x,y)  \leq |E(G)| n^2 + \binom{n}{2}(3s)^2= (s-1)n^3(1+o(1)). 
\end{align*}
Thus,
\begin{align*}
    \textup{exco}_2(n,M_s^3)=(s-1)n^3 (1+o(1)).
\end{align*} 
\end{proof}

\section{Star}
\label{sec:star}
Chung and Frankl \cite{Franklstar} determined the Tur\'an number of a star.

\begin{theo}[Chung, Frankl \cite{Franklstar}]
\label{extremalstar}
Let $s\geq 3$. If $s$ is odd and $n\geq s(s-1)(5s+2)/2$, then 
\begin{align*}
    \textup{ex}(n,S_s^3)=(n-2s)s(s-1)+ 2\binom{s}{3}.
\end{align*} 
If $s$ is even and $n\geq 2s^3-9s+7$, then
\begin{align*} 
\textup{ex}(n,S_s^3)=\left(s^2-\frac{3}{2}s \right)n-\frac{1}{2}(2s^3-9s+6).
\end{align*}
\end{theo}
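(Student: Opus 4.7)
The key observation is that a $3$-graph $H$ contains $S_s^3$ if and only if some vertex $v$ has link graph $L(v)$ with matching number at least $s$; equivalently, $H$ is $S_s^3$-free iff $\nu(L(v)) \le s-1$ for every $v \in V(H)$. The plan is therefore to reduce the problem to matchings in graphs, apply the Erd\H{o}s--Gallai bound \eqref{graphmatching} to each link, and then resolve the interaction between different link graphs by a structural argument.

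For the lower bound, I would exhibit explicit $S_s^3$-free constructions achieving the stated edge counts. The natural candidates involve a distinguished ``core'' set $C$ of $O(s)$ vertices together with a rule deciding which triples $\{a,b,x\}$ with $a,b\in C$, $x\notin C$ are edges, and which additional triples entirely inside $C$ are edges. The precise rule depends on the parity of $s$: for $s$ odd a symmetric choice of pairs inside $C$ produces a linear-in-$n$ term with slope $s(s-1)$ and an internal contribution of $2\binom{s}{3}$; for $s$ even a slightly asymmetric choice is necessary (otherwise an unwanted matching of size $s$ appears in the link of a core vertex), producing the smaller slope $s(2s-3)/2$ and the more complicated additive constant $-\frac{1}{2}(2s^3-9s+6)$. $S_s^3$-freeness would be verified by showing that the link graph of each vertex is contained in one of the Erd\H{o}s--Gallai extremal graphs $K_{2s-1}$ or $K_{s-1}+\overline{K_{n-s+1}}$, so in particular has matching number at most $s-1$.

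For the upper bound, the naive estimate $3|E(H)| = \sum_v |E(L(v))| \le n\cdot \textup{ex}(n-1,M_s^2)$ is too lossy by a linear factor, because the two Erd\H{o}s--Gallai extremal configurations cannot simultaneously be saturated by many link graphs sharing the same vertex set. My plan is to first show that the set $K$ of vertices whose link is ``large'' (close to the Erd\H{o}s--Gallai bound) has size bounded by a function of $s$ only: if too many vertices $v$ had $|E(L(v))|$ close to $(s-1)(n-s)+\binom{s-1}{2}$, then $L(v)$ would be forced by the Erd\H{o}s--Gallai stability into the form ``$K_{s-1}$ joined to an independent set'', yielding a dominating set $D_v\subset V(H)$ of size $s-1$ for each such $v$; an exchange between two such $D_v$'s attached to a common vertex would then produce a matching of size $s$ in some link, contradicting $S_s^3$-freeness. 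Once $K$ is identified as a bounded set of vertices, the edges of $H$ split into those meeting $K$ (which number at most $|K|\cdot \textup{ex}(n-1,M_s^2)$) and those disjoint from $K$ (which must themselves form an $S_s^3$-free 3-graph where every link has matching number at most some smaller constant, and hence contribute only lower order terms by induction on $n$).

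The main obstacle will be executing the stability step cleanly enough to extract exact rather than asymptotic constants: one must pin down the precise size and structure of the common kernel, handle the two Erd\H{o}s--Gallai extremal branches in tandem, and account for the parity dependence that distinguishes the two formulas. The thresholds $n\ge s(s-1)(5s+2)/2$ (odd) and $n\ge 2s^3-9s+7$ (even) suggest a delicate shifting or progressive-induction argument in the spirit of classical exact Erd\H{o}s--Ko--Rado--style results, and the exact additive constants $2\binom{s}{3}$ and $-\frac{1}{2}(2s^3-9s+6)$ should emerge from carefully counting the edges inside the kernel once the structure of the extremal examples is completely pinned down.
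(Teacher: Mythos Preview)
This theorem is not proved in the paper: it is a cited result of Chung and Frankl, stated here only as a tool for the proof of Theorem~\ref{codstar}. The paper quotes the statement, describes the two extremal constructions $F_o$ and $F_e$ from \cite{Franklstar}, and then uses the theorem as a black box. There is therefore no ``paper's own proof'' to compare your proposal against.

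That said, your sketch is pointed in the right direction as an outline of the Chung--Frankl argument. The reduction to $\nu(L(v))\le s-1$ is exactly the starting point, and the Erd\H{o}s--Gallai structure of each link is indeed what drives the analysis. Your description of the lower bound constructions is close in spirit to the actual $F_o$ and $F_e$ given in the paper (and in \cite{Franklstar}), though note that the odd-case construction $F_o$ is not a single core of $O(s)$ vertices but two disjoint blocks $A,B$ of size $s$ each, with edges those triples lying in $A$ or in $B$ together with one outside vertex; you would want to match that precisely. For the upper bound, your plan of isolating a bounded ``kernel'' via Erd\H{o}s--Gallai stability and then handling the remainder is plausible, but be aware that the exact argument in \cite{Franklstar} is considerably more intricate than your outline suggests: the parity-dependent additive constants and the explicit thresholds on $n$ arise from a careful case analysis rather than a clean induction, and your sketch does not yet contain the mechanism that forces the exact structure. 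As written, your proposal is a reasonable roadmap but not a proof.
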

They also determined the extremal example, which depends on the parity of $s$. 
\begin{exmp}[Chung, Frankl \cite{Franklstar}]
Their extremal example in the case when $s$ is odd is the following hypergraph $F_o$. The vertex set is the disjoint union of the sets $A,B,C$ with sizes $s,s$ and $n-2s$ respectively. The edge set is 
$$ E(F_o)=\{e: |e\cap A| \geq 2, e\cap B=\emptyset  \} \cup \{e: |e\cap B| \geq 2, e\cap A=\emptyset  \}. $$
\end{exmp}
\begin{exmp}[Chung, Frankl \cite{Franklstar}]
Their extremal example in the even case is constructed as follows. First, consider the auxiliary graph $G$ on $2s-1$ vertices $x_1,\ldots ,x_{s-1},y_1,\ldots ,y_{s-1},z$ with edges being all pairs $\{x_i,y_j\}$ with $i\neq j$, $\{x_i,y_i\}$ for $2i\leq s$ and $\{x_i,z\}, \{y_i,z\}$ for $2i>s$. Now construct $F$ on $n$ vertices with edges being all $3$-sets intersecting $V(G)$ in one edge or containing two edges of $G$. The extremal example in the even case is the hypergraph $F_e$ constructed by adding the edges of the form $x_1y_iz$ to $F$.   
\end{exmp}

We will determine the codegree squared extremal number of the star asymptotically by using their result.

\begin{proof}[Proof of Theorem~\ref{codstar}]
The lower bound is achieved by the two extremal examples presented above. We have 
\begin{align*}
    \textup{co}_2(F_o)= 2 \left (\binom{s}{2}(n-s-2)^2+s(n-2s)(s-1)^2 \right)=s(s-1)n^2 (1+o(1))
\end{align*} 
and
\begin{align*} \textup{co}_2(F_e)=\left( s^2-\frac{3}{2}s\right)n^2(1+o(1)).
\end{align*}
For the proof of the upper bound let $H$ be an $n$-vertex $S_s^3$-free $3$-graph. Recall that the weight of an edge $e=xyz\in E(H)$ is defined to be
    \[
    w_H(e)=d(x,y)+d(x,z)+d(y,z).
    \]
Consider the subgraph $H'\subset H$ only consisting of the edges $e\in E(H)$ with $w_H(e)\geq n+4s$. Since $H'$ is a subgraph of $H$, $H'$ is clearly $S_s^3$-free. Further, we claim that $H'$ is $M_{3s}^3$-free and satisfies $d_{H'}(x,y)<2s$ for all $x,y\in V(H)$.
\begin{claim}
$H'$ is $M_{3s}^3$-free.
\end{claim}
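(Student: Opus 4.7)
The plan is to argue by contradiction. Suppose $H'$ contains a matching $e_1,\ldots,e_{3s}$ of $3s$ pairwise disjoint edges. I will extract a copy of $S_s^3$ from $H$ using only these matching edges together with their codegrees in $H$, contradicting the assumption that $H$ is $S_s^3$-free.

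By the definition of $H'$, each edge $e_i = x_iy_iz_i$ satisfies $w_H(e_i) \geq n+4s$, so by pigeonhole some pair $p_i \subset e_i$ satisfies $d_H(p_i) \geq (n+4s)/3$. Since the edges $e_i$ are disjoint, the pairs $p_1,\ldots,p_{3s}$ are also pairwise disjoint. A double count of pairs $(v,i)$ with $v \in N_H(p_i)$ gives
\begin{align*}
\sum_{i=1}^{3s} |N_H(p_i)| \;=\; \sum_{i=1}^{3s} d_H(p_i) \;\geq\; 3s \cdot \frac{n+4s}{3} \;=\; s(n+4s) \;>\; sn.
\end{align*}
Averaging over the $n$ vertices produces some $v \in V(H)$ lying in strictly more than $s$ of the link sets $N_H(p_i)$, and in particular in at least $s$ of them. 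Fix indices $i_1,\ldots,i_s$ with $v \in N_H(p_{i_j})$ for every $j$.

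The $s$ triples $\{v\} \cup p_{i_j}$ are then all edges of $H$ and all contain $v$. Any two of them intersect in exactly $\{v\}$: the pairs $p_{i_j}$ are pairwise disjoint because they lie inside a matching, and $v \notin p_{i_j}$ for each $j$ by the definition of $N_H(p_{i_j})$. Hence these $s$ edges form a copy of $S_s^3$ in $H$, yielding the desired contradiction. The main subtlety is the last step, namely verifying the star structure; once one observes that the chosen pairs inherit disjointness from the matching and that $v$ is automatically outside each $p_{i_j}$, the rest is a direct averaging argument driven by the weight threshold $n+4s$ in the definition of $H'$.
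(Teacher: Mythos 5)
Your proof is correct and is essentially the same averaging argument as the paper's, just organized slightly more cleanly: the paper sums $d_H(x,y)$ over all $9s$ pairs inside the matching edges and then, after finding a vertex $v$ hitting more than $3s$ of them, must collapse several pairs from a common matching edge down to one per edge; you pre-select a single heavy pair $p_i$ per matching edge by pigeonhole, so the disjointness of the chosen pairs and the resulting $S_s^3$ structure fall out automatically. Both proofs rely on the threshold $w_H(e)\geq n+4s$ and locate the star center by averaging over all $n$ vertices.
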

\begin{proof}
Assume $H'$ contains a copy $M$ of $M_{3s}^3$. Then
\begin{align*}
    \sum_{\substack{\{x,y\}\subset e \\ \text{for some } e\in M }} d(x,y)\geq (n+4s)3s,
\end{align*}
and therefore there exists a vertex $v\in V(H)$ which is contained in at least $3s$ edges of type $\{v,x,y\}$ with $\{x,y\}\subset f$ for some $f \in M$. Thus, $v$ is contained in at least $s$ edges only having $v$ as a pairwise intersection and therefore is the center of a copy of $S_s^3$, a contradiction. We conclude that $H'$ is $M_{3s}^3$-free.  
\end{proof}
\begin{claim}
\label{codegreesmall}
$H'$ satisfies $d_{H'}(x,y)<2s$ for all $x,y\in V(H)$.
\end{claim}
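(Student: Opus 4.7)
The plan is to argue by contradiction: assume some pair $\{x,y\}$ satisfies $d_{H'}(x,y)\geq 2s$, and construct a copy of $S_s^3$ inside $H$, violating the hypothesis. Fix a subset $N\subseteq N_{H'}(x,y)$ with $|N|=2s$. Every triple $xyz$ with $z\in N$ is heavy, so $w_H(xyz)\geq n+4s$, and combined with the trivial bound $d_H(x,y)\leq n-2$ this rearranges to
\[
d_H(x,z)+d_H(y,z)\;\geq\; n+4s-d_H(x,y)\;\geq\; 4s+2
\]
for every $z\in N$. Consequently each $z\in N$ satisfies $d_H(x,z)\geq 2s+1$ or $d_H(y,z)\geq 2s+1$, and by pigeonhole together with the symmetry between $x$ and $y$ we may assume that $N_x:=\{z\in N:d_H(x,z)\geq 2s+1\}$ has $|N_x|\geq s$.

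Next I would fix any $I\subseteq N_x$ of size $s$ and, for each $z\in I$, introduce the candidate set $S_z:=N_H(x,z)\setminus(\{y\}\cup I)$, which represents the possible partners of $z$ in a potential star at $x$. Since $y\in N_H(x,z)$ (because $xyz\in E(H)$), we have
\[
|S_z|\;\geq\; d_H(x,z)-1-|I|\;\geq\; 2s+1-1-s\;=\; s.
\]
Applying Hall's marriage theorem to the family $\{S_z:z\in I\}$ then yields a system of distinct representatives $\{a_z\}_{z\in I}$ with $a_z\in S_z$: indeed, for any $J\subseteq I$ each individual $S_z$ with $z\in J$ already satisfies $|S_z|\geq s\geq |J|$, so Hall's condition $|\bigcup_{z\in J} S_z|\geq |J|$ is immediate.

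Finally, the $s$ triples $\{x,z,a_z\}$ for $z\in I$ are edges of $H$ by the definition of $S_z$, and any two of them meet exactly at $\{x\}$: the elements of $I$ are pairwise distinct, the representatives $a_z$ are pairwise distinct by the SDR, and the exclusion $S_z\cap(\{y\}\cup I)=\emptyset$ ensures that no $a_z$ coincides with another $z'\in I$ or with $x,y$. Hence these $s$ edges form an $S_s^3$ centered at $x$, contradicting the $S_s^3$-freeness of $H$. The only delicate point is the calibration of constants: the threshold $n+4s$ in the definition of $H'$ is tuned precisely so that the bound $2s+1$ produced by pigeonhole leaves exactly the slack $|S_z|\geq s$ needed to run Hall's theorem with $|I|=s$; there is no further structural obstacle.
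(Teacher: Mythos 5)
Your proof is correct and follows essentially the same skeleton as the paper's: assume $d_{H'}(x,y)\geq 2s$, use the weight bound $w_H(xyz)\geq n+4s$ together with $d_H(x,y)\leq n-2$ to force $d_H(x,z)\geq 2s+1$ or $d_H(y,z)\geq 2s+1$ for each of the $2s$ neighbors, pigeonhole to get $s$ neighbors with high $H$-codegree with (say) $x$, and conclude a star at $x$. The only difference is that where the paper outsources the final extraction of the star to \cite[Lemma~2.1]{Franklstar}, you make it self-contained via Hall's theorem (one could equally well do it greedily), which is a nice bonus but not a different method.
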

\begin{proof}
Assume that there exists $x,y\in V(H')$ such that $d_{H'}(x,y)\geq 2s$. Let $v_1,\ldots,v_{2s}$ be vertices such that $v_ixy\in E(H')$. For each $i$, $d_H(x,v_i)>2s$ or $d_H(y,v_i)>2s$ since $w_H(e)>n+4s$ for all $e\in E(H')$. Thus the number of indices $i\in [2s]$ such that $d_H(x,v_i)>2s$ or the number of indices $i\in [2s]$ such that $d_H(y,v_i)>2s$ is at least $s$. This contradicts that $H$ is $S_{s}^3$-free (see e.g. \cite [Lemma~2.1]{Franklstar}). We conclude that $d_{H'}(x,y)<2s$ for all $x,y\in V(H)$. 
\end{proof}
Next, we bound the number of edges of $H'$. 
\begin{claim}
\begin{align*}
    |E(H')|\leq (12s^2+6s+1)3s.
\end{align*}
\end{claim}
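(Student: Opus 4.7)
The plan is to exploit both the $M_{3s}^3$-freeness of $H'$ and the codegree bound $d_{H'}(x,y)<2s$ from Claim~\ref{codegreesmall}, together with the fact that $H'\subseteq H$ is still $S_s^3$-free. I would first take a maximum matching $M$ in $H'$; since $H'$ contains no $M_{3s}^3$, we have $|M|\leq 3s-1$, so the set $T:=V(M)$ of size $|T|\leq 3(3s-1)$ is a transversal of $E(H')$ (any edge missing $T$ could be added to $M$, contradicting maximality). This reduces the problem to bounding $d_{H'}(v)$ for each $v\in T$, giving
\[
|E(H')| \leq \sum_{v\in T} d_{H'}(v).
\]

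Next I would analyze the link graph $L_{H'}(v)$, whose number of edges equals $d_{H'}(v)$. Two properties are available: (i) $L_{H'}(v)$ has no matching of size $s$, because any $s$ pairwise disjoint edges $x_iy_i$ of the link would, together with $v$, yield an $S_s^3$ centered at $v$ in $H'$; (ii) its maximum degree is at most $2s-1$, since the degree of $x$ in $L_{H'}(v)$ equals $d_{H'}(v,x)$, bounded by Claim~\ref{codegreesmall}. Since a graph with matching number at most $s-1$ admits a vertex cover of size at most $2(s-1)$ (the vertex set of a maximum matching), and each cover vertex meets at most $2s-1$ edges, we obtain
\[
d_{H'}(v)=|E(L_{H'}(v))|\leq 2(s-1)(2s-1).
\]

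Combining these estimates and applying the elementary polynomial inequality
\[
3(3s-1)\cdot 2(s-1)(2s-1) \;=\; 36s^3-66s^2+36s-6 \;\leq\; 36s^3+18s^2+3s \;=\; (12s^2+6s+1)\cdot 3s,
\]
valid for all $s\geq 1$, yields the desired bound on $|E(H')|$. The only real subtlety is step (i), where $S_s^3$-freeness of $H'$ must be translated into a matching constraint on the link graph; after that, the bound is a routine application of the K\"onig-style vertex-cover estimate combined with an elementary polynomial comparison, with no further tools needed beyond Claim~\ref{codegreesmall} and the two freeness properties of $H'$.
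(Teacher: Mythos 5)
Your proof is correct, and it rests on exactly the same two ingredients as the paper's: the $M_{3s}^3$-free hypothesis forces a maximum matching of size $\le 3s-1$, and the link graph of any vertex has bounded degree (from Claim~\ref{codegreesmall}) and no matching of size $s$ (from $S_s^3$-freeness), hence few edges. The only difference is bookkeeping. The paper fixes one edge $e$ of a maximum matching and bounds the number of edges meeting $e$, splitting by intersection size ($6s$ for two-element overlaps, $12s^2$ for one-element overlaps via the links of the three vertices of $e$), then multiplies by the matching bound $3s$. You instead take the vertex set $T$ of a maximum matching as a transversal and bound $|E(H')|\le\sum_{v\in T}d_{H'}(v)$, applying the same link-graph vertex-cover estimate to each $v\in T$ directly. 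Your version avoids the case split by intersection size and is slightly cleaner, while the paper's gives an intuitively tighter local quantity (number of edges meeting one fixed edge). Both land comfortably under $(12s^2+6s+1)\cdot 3s$, and your polynomial comparison at the end is verified correct.
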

\begin{proof}
Let $e\in E(H')$ be an arbitrary edge. By Claim~\ref{codegreesmall}, the number of edges in $H'$ intersecting $e$ in exactly two elements is at most $6s$. Furthermore, the number of edges in $H'$ intersecting $e$ in exactly one element is at most $12s^2$. To verify this, let $v\in e$ and consider the link graph $L(v)$ in $H'$. By Claim~\ref{codegreesmall}, $L(v)$ has maximum degree at most $2s$. Further, $L(v)$ cannot have a matching of size $s$, as otherwise it forms a star in $H'$ with $v$ being the center. A graph with maximum degree at most $2s$ and no matching of size $s$ has at most $4s^2$ edges. Thus, $e$ is incident to at most $4s^2$ edges in $H'$ intersecting $e$ in $v$. This allows us to conclude that $e$ is incident to at most $12s^2+6s$ edges in $H'$. Since $H'$ is $M_{3s}^3$-free, it can have at most $(12s^2+6s+1)3s$ edges. 
\end{proof}

Using the bounds on the edges in $H'$ and $H$, we can get an upper bound for the codegree squared sum of $H$: \begin{align*}
    \textup{co}_2(H) &= \sum_{e\in E(H)} w_H(e) = \sum_{e\in E(H')} w_H(e) + \sum_{e\in E(H) \setminus E(H')} w_H(e) \\
   &\leq |E(H')| 3n + |E(H)| (n+4s) \leq (12s^2+6s+1)3s (3n)+ |E(H)| (n+4s).   
\end{align*}
Thus, by using Theorem~\ref{extremalstar}, if $s$ is odd, then
\begin{align*}
 \textup{co}_2(H) \leq s(s-1)n^2(1+o(1))
\end{align*}
and if $s$ is even, then
\begin{align*}
 \textup{co}_2(H) \leq \left(s^2-\frac{3}{2}s \right) n^2(1+o(1)).
\end{align*}
\end{proof}

\section{Further Discussion}
\label{openques}

\subsection{Positive minimum codegree}
There are more notions of extremality which could be studied. An interesting variant of the minimum codegree threshold is the positive minimum codegree which very recently was introduced by Halfpap, Lemons and Palmer~\cite{personalComCory}. Given a $3$-graph $G$, the positive codegree $\textup{co}^+(G)$ is the minimum of $d(x,y)$ over all pairs ${x,y}$ with positive codegree, i.e. $d(x,y)>0$. Given a $3$-graph $H$, the \emph{positive minimum codegree} of $H$, denoted by $\textup{co}^+\textup{ex}(n,H)$, is the largest minimum positive co-degree in an $F$-free $n$-vertex graph. 

\begin{ques}[Halfpap, Lemons and Palmer~\cite{personalComCory}]
Given a $3$-graph $H$, what is the \emph{positive minimum codegree} of $H$. 
\end{ques}
The positive minimum codegree of $K_4^{3-}$ and $F_5$ is obtained by $S_n$, i.e. 
\begin{align*}
    \textup{co}^+\textup{ex}(n,K_4^{3-})=\textup{co}^+\textup{ex}(n,F_5)=\left\lfloor \frac{n}{3} \right\rfloor.
\end{align*}
This shows that the positive minimum codegree behaves significantly different than the minimum codegree threshold. An interesting property Halfpap, Lemons and Palmer~\cite{personalComCory} proved is that $\textup{co}^+\textup{ex}(n,H) = o(n)$ if and only if $H$ is $3$-partite. This implies that if $H$ satisfies  $\textup{co}^+\textup{ex}(n,H)=\Theta(n)$, then in fact $\textup{co}^+\textup{ex}(n,H)\geq \frac{n}{3} (1+o(1))$. This means that the scaled positive minimum codegree jumps from $0$ to $1/3$.

\subsection{Higher Uniformities}
We propose to study $\sigma(H)$ for $k$-uniform hypergraphs with $k\geq 4$. In particular it would be interesting to determine the codegree squared density for $K_5^4$, the $4$-uniform hypergraph on five vertices with five edges. For the Tur\'an density, Giraud~\cite{MR1077144} proved $\pi(K_5^4)\geq 11/16$ using the following construction. Let $A$ be an $n/2 \times n/2$ $\{0,1\}$-matrix. Define the $4$-graph $H(A)$ on $n$ vertices corresponding to the rows and columns of $A$. 
The edge set consists of two types of edges: any four vertices with exactly three rows or three columns; and any four vertices of two rows and two columns forming a $2 \times 2$ submatrix with an odd sum.
The best-known upper bound is $\pi(K_5^4)\leq \frac{1753}{2380}$ which is due to Markstr\"{o}m~\cite{MR2548922}.

For the codegree squared density, we get
\begin{align*}
    0.484375=\frac{31}{64}\leq \sigma(K_5^4)\leq 0.55241250, 
\end{align*}
where the lower bound comes from the same construction and the upper bound
from flag al\-geb\-ras. We conjecture the lower bound to be tight.
\begin{conj}
    We have
    \begin{align*}
        \sigma(K_5^4)=\frac{31}{64}.
    \end{align*}
\end{conj}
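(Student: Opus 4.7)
The lower bound $\sigma(K_5^4) \geq 31/64$ follows by a direct computation on Giraud's construction $H(A)$ when $A$ is chosen so that half of all $2 \times 2$ submatrices have odd sum: the four triple types ($3R$, $3C$, $2R1C$, $1R2C$) contribute a total of $\frac{31}{384}n^5(1+o(1))$ to $\textup{co}_2(H(A))$, and dividing by the normalization $\binom{n}{3}(n-3)^2 \sim n^5/6$ yields $31/64$. My plan for the matching upper bound would follow the stability-plus-exact template used for Theorem~\ref{F5exact}.

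First I would attempt to establish a stability statement: for every $\varepsilon > 0$ there exists $\delta > 0$ such that any $K_5^4$-free $n$-vertex $4$-graph $G$ with $\textup{co}_2(G) \geq (31/64 - \delta)\binom{n}{3}(n-3)^2$ admits a bipartition $V(G) = R \cup C$ with $|R|, |C| = n/2 \pm o(n)$ together with a $\{0,1\}$-labeling of the pairs in $R \times C$, such that the edge structure mimics Giraud's up to $o(n^4)$ corrections. The natural starting point is the observation that for each vertex $v$, if a $4$-tuple $\{u_1,u_2,u_3,u_4\}$ spans a $K_4^3$ in the link $3$-graph $L(v)$ and is itself an edge of $G$, then $\{v,u_1,u_2,u_3,u_4\}$ spans a $K_5^4$. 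Averaging this constraint over $v$ and combining with the tight bound $\sigma(K_4^3)=1/3$ from~\cite{BalCleLidmain} should yield a first nontrivial upper bound on $\sigma(K_5^4)$.

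Next I would try to extract structure: when the averaging is near-tight, each $L(v)$ should be close to a $K_4^3$-extremal configuration of $C_n$-type as analyzed in Section~\ref{K43}. Reconciling these near-extremal links across vertices should force a global bipartition $R \cup C$ together with a parity labeling on the cross-pairs, which is precisely Giraud's description. Within the resulting space of Giraud-type constructions parameterized by $|R|/n$ and the density $p$ of odd $2 \times 2$ submatrices, a short calculus argument gives a unique maximum at $|R|/n=1/2$ and $p=1/2$, yielding $31/64$. To convert an asymptotic statement into an $n$-large-enough form, one could then run a minimum-degree-type removal induction along the lines of the proof of Theorem~\ref{F5exact}.

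The hardest part will be the stability step. Unlike the $F_5$ case, where $F_5$-avoidance immediately forces a clean tripartition via a Bollob\'as-type removal argument, the $K_5^4$-avoidance constraint is substantially weaker at the local level: $L(v)$ need not be $K_4^3$-free globally, only along $4$-tuples that are themselves edges of $G$, so a direct link-reduction loses a multiplicative factor. Moreover, $\sigma(K_4^3)=1/3$ is itself known only through flag algebras, and a clean structural description of its approximate extremizers is not readily available to feed into a gluing argument. Closing the gap between the current flag-algebra bound $\sigma(K_5^4) \leq 0.55241$ and the conjectured $31/64 \approx 0.484$ will likely require either a flag-algebra computation specifically encoding the bipartite-plus-parity structure of $H(A)$, or a more delicate combinatorial argument that fully exploits the richer local obstructions imposed by $K_5^4$-freeness.
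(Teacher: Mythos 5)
You have correctly recognized that this statement is a \emph{conjecture} in the paper, not a theorem. The paper establishes only $31/64 \le \sigma(K_5^4) \le 0.5524$, with the lower bound coming from Giraud's construction and the upper bound from a flag algebra computation, and explicitly leaves the equality open; there is no proof of the stated equality to compare against, only a lower bound to verify.

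Your lower-bound computation is correct and is the argument the paper gestures at. With $|R|=|C|=n/2$ and $A$ chosen so that half the $2\times 2$ submatrices have odd parity --- a condition which, since each row-pair contributes at most $(n/2)^2/4$ odd submatrices, actually forces each row-pair's and each column-pair's mod-$2$ sum to be balanced --- the codegrees of $3R$ and $3C$ triples are $n/2 + O(1)$ and those of $2R1C$ and $1R2C$ triples are $3n/4 + O(1)$. This gives $\textup{co}_2(H(A)) = 2\cdot\frac{n^5}{192} + 2\cdot\frac{9n^5}{256} + o(n^5) = \frac{31}{384}n^5(1+o(1))$, and dividing by $\binom{n}{3}(n-3)^2 \sim n^5/6$ indeed yields $31/64$.

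Everything after that in your write-up is an outline, and you are appropriately candid that it is not a proof. Your diagnosis of why the stability route is hard --- $K_5^4$-freeness of $G$ restricts $L(v)$ only along $4$-sets that are themselves edges of $G$, so the link-reduction to $\sigma(K_4^3)$ is lossy, and no structural stability description is available for near-extremal $K_4^3$-free $3$-graphs in $\ell_2$-norm to feed a gluing step --- is accurate, and it is precisely the reason the paper stops at the flag-algebra bound $0.5524$. Nothing in the paper closes this gap. In short: your lower bound is right and matches the paper; your treatment of the upper bound is a faithful account of what is open, not a proof of the conjecture.
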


Another $4$-graph where we can determine the codegree squared density is $K_5^{4=}$, the unique $4$-uniform hypergraph on five vertices with exactly three edges.
\begin{theo}
\label{5vertices3edges}
Let $n$ be an integer divisible by $4$. Then,
\label{codK54=}
\begin{align*}
\textup{exco}_2(n,K_5^{4=})=\frac{n^2}{16} \binom{n}{3}.
\end{align*}
\end{theo}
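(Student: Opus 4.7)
The plan is to derive the upper bound from a short double-counting plus Cauchy-Schwarz argument, and to match it with an explicit construction saturating both inequalities. Let $H$ be any $K_5^{4=}$-free $4$-graph on $n$ vertices. The crucial reformulation is that $H$ is $K_5^{4=}$-free if and only if every $5$-vertex subset $S\subseteq V(H)$ contains at most $2$ edges of $H$: any three $4$-subsets of $S$ are the complements (within $S$) of three different vertices, so they share the remaining two vertices, and the induced $4$-graph on $S$ is then exactly $K_5^{4=}$.

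With this in hand I would first show $\textup{co}_2(H)\le n|E(H)|$. The key identity is $\textup{co}_2(H)=\sum_T d_H(T)^2=4|E(H)|+2N$, where $N$ denotes the number of unordered pairs $\{e,f\}\subseteq E(H)$ with $|e\cap f|=3$; here $\sum_T \binom{d_H(T)}{2}=N$ because two distinct $4$-edges through a common triple must share exactly that triple. Writing $e_S$ for the number of edges of $H$ inside $S$, a double-count gives $\sum_S e_S^2=(n-4)|E(H)|+2N$, while $e_S\in\{0,1,2\}$ forces $e_S^2\le 2e_S$, so $\sum_S e_S^2\le 2\sum_S e_S=2(n-4)|E(H)|$; combining these yields $N\le (n-4)|E(H)|/2$ and hence $\textup{co}_2(H)\le n|E(H)|$. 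The second ingredient is Cauchy-Schwarz on the codegree vector: since $\sum_T d_H(T)=4|E(H)|$, one has $16|E(H)|^2 \le \binom{n}{3}\textup{co}_2(H)$. Feeding the first bound into the second gives $16|E(H)|^2\le n\binom{n}{3}|E(H)|$, i.e., $|E(H)|\le n\binom{n}{3}/16$, and therefore $\textup{co}_2(H)\le n|E(H)|\le\frac{n^2}{16}\binom{n}{3}$.

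For the matching lower bound I would exhibit a $K_5^{4=}$-free $4$-graph in which both inequalities are tight: every triple $T$ satisfies $d_H(T)=n/4$ (saturating Cauchy-Schwarz) and every $5$-set contains either $0$ or $2$ edges (saturating the $5$-set bound). For $n=8$, two edge-disjoint copies of the Steiner system $S(3,4,8)$ placed on the same $8$ vertices work: each triple then lies in exactly $2$ blocks and each $5$-set contains exactly $2$ blocks. For general $n\equiv 0\pmod 4$, one needs a $3$-$(n,4,n/4)$ design whose every $5$-subset carries at most two blocks, which one produces via resolvable/iterated design constructions. I expect the main obstacle to be producing an explicit such design for every $n$ divisible by $4$; the upper bound itself drops out cleanly from the short chain above, so essentially all the remaining work is on the construction side.
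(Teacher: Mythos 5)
Your upper bound is correct, and at its core it is the same double-count the paper uses (following de Caen). The paper counts pairs $(e,f)$ with $e\in E(H)$, $f\notin E(H)$, $|e\cap f|=3$, using $K_5^{4=}$-freeness to force at least three of the four triple-preserving swaps of each edge to be non-edges; you instead count pairs of edges through a common triple and the constraint $e_S\le 2$ on $5$-sets. Within each $5$-set these are complementary counts, and both yield $\textup{co}_2(H)\le n|E(H)|$. Where you genuinely diverge is in finishing the upper bound: the paper invokes the Gunderson--Semeraro Tur\'an bound $|E(H)|\le \tfrac{n}{16}\binom{n}{3}$ as a cited black box, whereas you rederive it from $\textup{co}_2(H)\le n|E(H)|$ together with Cauchy--Schwarz on the codegree vector. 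That is a clean, self-contained way to close the loop, and it is a nice observation that the same inequality doing the $\ell_2$ work also reproduces the $\ell_1$ bound.

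The lower bound, however, has a genuine gap, which you flag yourself. Matching the upper bound for every $n\equiv 0\pmod 4$ requires a $K_5^{4=}$-free $4$-graph in which every triple has codegree exactly $n/4$ (so that Cauchy--Schwarz is tight) and every $5$-set spans $0$ or $2$ edges (so that $e_S^2\le 2e_S$ is tight). Your $n=8$ example via two edge-disjoint $S(3,4,8)$ systems is correct, but the assertion that one ``produces'' such a hypergraph for general $n\equiv 0\pmod 4$ by resolvable or iterated design constructions is not established in your write-up, and it is not a routine design-theoretic fact. The paper resolves exactly this point by citing the construction of Gunderson and Semeraro, who exhibit, for every $n$ divisible by $4$, a $4$-graph with all $3$-codegrees equal to $n/4$ and every $5$ vertices spanning $0$ or $2$ edges. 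Without that reference (or another existence proof), your lower bound is incomplete.
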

Gunderson and Semeraro~\cite{K54doppleminusgunderson} presented a construction of a $4$-uniform hypergraph on $n$ vertices, where $n$ is divisible by $4$: every $3$-subset is contained in exactly $n/4$ hyperedges and every $5$ vertices span $0$ or $2$ edges. Denote by $G$ this $4$-uniform hypergraph. Then, $e(G)= n \binom{n}{3}/16$ and $\textup{co}_2(G)= n^2 \binom{n}{3}/16$. Furthermore, they proved that 
\begin{align}
\label{Gunderson}
    \textup{ex}(n,K_5^{4=})= \frac{n}{16} \binom{n}{3}
\end{align}
for $n$ divisible by $4$ by modifying a double counting idea by de Caen~\cite{MR734038}. Note that Gunderson and Semeraros' result~\cite{K54doppleminusgunderson} also implies the codegree threshold result $\textup{ex}_3(n,K_5^{4=})=\frac{n}{4}$ for $n$ divisible by $4$. Here, we will modify de Caen's argument~\cite{MR734038} to prove Theorem~\ref{5vertices3edges}. 
\begin{proof}[Proof of Theorem~\ref{5vertices3edges}]
Let $H$ be a $4$-uniform $n$-vertex $K_5^{4=}$-free hypergraph. Denote $N$ the number of pairs $(e,f)$ with $|e|=|f|=4$, $|e\cap f|=3$, $e\in E(H)$ and $f\notin E(H)$. Then, on one side
\begin{align}
\label{doupecountK54=1}
    N=\sum_{e\in E(H)} \sum_{x\not\in e} |\{z\in e: e \cup \{ x \} \setminus  \{z\} \not\in E(H)\}| \geq \sum_{e\in E(H)} \sum_{x\not\in e} 3= 3e(H)(n-4),
\end{align}
where the inequality holds because $H$ is $K_5^{4=}$-free.
On the other side,
\begin{align}
\label{doupecountK54=2}
    N= \sum_{A\in \binom{[n]}{3}}d_H(A)(n-3-d_H(A)) = (n-3)\sum_{A \in \binom{[n]}{3}} d_H(A) - \textup{co}_2(H)= 4(n-3)  e(H)-  \textup{co}_2(H).
\end{align}
Combining \eqref{doupecountK54=1} and \eqref{doupecountK54=2}, we get 
\begin{align*}
    \textup{co}_2(H)\leq 4(n-3)e(H)- 3e(H)(n-4) = 
     ne(H) \leq \frac{n^2}{16} \binom{n}{3},
\end{align*}
where in the last inequality we used \eqref{Gunderson}.
\end{proof}
We remark that $K_5^{4=}$ is the only non-trivial hypergraph we are aware of which has the property that there is a construction (presented in \cite{K54doppleminusgunderson}) that is extremal in each of $\ell_1$-norm, $\ell_2$-norm and for the codegree threshold.

\subsection{Big Triangle}
Let $T^{(2k)}$ be the $2k$-graph obtained by having three disjoint sets $A,B,C$ of size $k$ and the three edges $A\cup B,\ A \cup C,\ C \cup A$. Frankl~\cite{fattrianglefrankl} proved that $\pi(T^{(2k)})=1/2$. The lower bound comes from the $2k$-graph $G$ where the vertex set is partitioned into two equal sized parts and edges are those $2k$-sets which intersect each part in an odd number of elements. The minimum codegree of this extremal example is $n/2-o(n)$. Mubayi and Zhao~\cite{Mubayicode} observed that in fact $\pi_{2k-1}(T^{(2k)})=1/2$. 

The maximum sum of $k$-codegrees squared an $n$-vertex $2k$-uniform $T^{(2k)}$-free graph can have, can be easily calculated. Let $H$ be a $T^{(2k)}$-free $2k$-graph. Then,

\begin{align*}
\sum_{\substack{A\subset V(H) \\ |A|=k}}d(A)^2 &=  \sum_{B\in E(H)}\sum_{\substack{A\subset B \\ |A|=k}} d(A)\leq |E(H)| \binom{2k}{k} \frac{1}{2}\binom{n}{k} (1+o(1)) \\
&\leq \frac{1}{2}\binom{n}{2k} \binom{2k}{k} \frac{1}{2}\binom{n}{k}(1+o(1))=\frac{1}{4}\binom{n}{k}^3 (1+o(1)).
\end{align*}
On the other side the $2k$-graph $G$ satisfies
\begin{align*}
\sum_{\substack{A\subset V(G) \\ |A|=k}}d_{G}(A)^2 \geq\binom{n}{k} \left( \frac{\binom{n-k}{k-1}(\frac{n}{2}-2k) }{k}  \right)^2 =\frac{1}{4}\binom{n}{k}^3 (1+o(1)).
\end{align*}
This determines asymptotically the maximum $k$-codegree squared sum of an $n$-vertex $2k$-uniform $T^{(2k)}$-free hypergraph. We wonder whether the asymptotic extremal example for the codegree squared sum is also $G$.
\begin{ques}
What is $\sigma(T^{(2k)})$? 
\end{ques}

\subsection{Induced Problems}
We propose to study induced problems. 
\begin{ques}
Given a family of $k$-graphs $\mathcal{F}$, what is the maximum $\ell_2$-norm a $k$-uniform $n$-vertex hypergraph $G$ can have without containing any $H\in \mathcal{F}$ as an induced subhypergraph?
\end{ques}
Let $\mathcal{F}$ be a family of $k$-uniform hypergraphs. Denote by $\textup{exco}_2^{ind}(n,\mathcal{F})$ the maximum codegree squared sum among all $k$-uniform $n$-vertex hypergraphs not containing any $F\in \mathcal{F}$ as an induced subhypergraph. Here, we will present two examples where we can determine $\textup{exco}_2^{ind}(n,\mathcal{F})$. 

Denote by $K_5^{4-}$ the $4$-uniform hypergraph with exactly $4$ edges on $5$ vertices and $E_5^{4}$ the $4$-uniform hypergraph with exactly one edge on five vertices. Observe that the proof of Theorem~\ref{5vertices3edges} actually gives the stronger result 
\begin{align*}
    \textup{exco}_2^{ind}(n,\{E_5^{4},K_5^{4=}, K_5^{4-}, K_5^4\})=\frac{n^2}{16} \binom{n}{3}
\end{align*}
for $n$ divisible by $4$.

Frankl and F\"uredi~\cite{K43-extremalfrankl} proved that the $3$-graph with the maximum number of edges only containing exactly $0$ or $2$ edges on any $4$ vertices is the $n$-vertex blow-up of $S_6$. As observed in Subsection~\ref{K43-F32}, this $3$-graph has codegree squared sum of $5/108n^4 (1+o(1))$. Using flag algebras for the upper bound, we get 
\begin{align*}
     \textup{exco}_2^{ind}(n,\{E_4^{3},K_4^{3-},K_4^3\})=\frac{5}{108}n^4 (1+o(1)),
\end{align*}
where $E_4^{3}$ denotes the $3$-graph with exactly one edge on four vertices.   

\section*{Acknowledgments}
We thank Emily Heath and Haoran Luo for useful comments and suggestions. 
\bibliographystyle{abbrvurl}
\bibliography{codegreesquare2}

%

\end{document}